\newtheorem{theorem}{Theorem}[section]
\newtheorem{lemma}[theorem]{Lemma}
\newtheorem{proposition}[theorem]{Proposition}
\newtheorem{claim}[theorem]{Claim}
\theoremstyle{definition}
\newtheorem{definition}[theorem]{Definition}
\newtheorem{assumption}[theorem]{Assumption}
\theoremstyle{remark}
\newtheorem{remark}[theorem]{Remark}
\numberwithin{equation}{section}
\newcommand{\prob}{\mathbb{P}}
\newcommand{\R}{\mathbb{R}}
\newcommand{\C}{\mathbb{C}}
\newcommand{\E}{\mathbb{E}}
\newcommand{\eps}{\epsilon}
\newcommand{\indic}{\mathbbm{1}}
\newcommand{\cumul}{{\scriptstyle\mathcal{K}}}
\newcommand{\const}{\mathcal{C}}
\newcommand{\asedge}{\widetilde{\mathcal{L}}}
\newcommand{\vv}{\mathbf{v}}
\newcommand{\tcb}{\textcolor{blue}}
\newcommand{\deri}{\partial_{ij}}
\newcommand{\tz}{\tilde{z}}
\newcommand{\bigO}{\mathcal{O}}
\newcommand{\expec}[1]{{\mathbb E}\left[ #1 \right]}
\newcommand{\size}[1]{\left| #1 \right|}
\newcommand{\paren}[1]{\left( #1 \right)}
\newcommand{\matnorm}[1]{\lVert #1 \rVert}
\newcommand{\combi}[2]{\begin{pmatrix}
		#1 \\ #2
\end{pmatrix}}
\DeclareMathOperator{\Mod}{mod}
\begin{document}

\title{Higher order fluctuations of extremal eigenvalues of sparse random matrices}

\author{Jaehun~Lee}
\address{Department of Mathematics, HKUST, Clear Water Bay, Hong Kong}
\email{jaehun.lee@ust.hk}

\thanks{\tcb{\emph{Note.}~The first draft of this paper was done while the author was at KAIST, South Korea.}}

\subjclass[2010]{
	60B20}

\keywords{Sparse random matrices; Extremal eigenvalues; Edge rigidity; Random correction terms}


\begin{abstract}
We consider extremal eigenvalues of sparse random matrices, a class of random matrices including the adjacency matrices of Erd\H{o}s-R\'{e}nyi graphs $\mathcal{G}(N,p)$. Recently, it was shown that the leading order fluctuations of extremal eigenvalues are given by a single random variable associated with the total degree of the graph (\textit{Ann. Probab.}, 48(2):916--962, 2020; \textit{Probab. Theory Related Fields}, 180:985--1056, 2021). We construct a sequence of random correction terms to capture higher (sub-leading) order fluctuations of extremal eigenvalues in the regime $N^{\epsilon} < pN < N^{1/3-\epsilon}$. Using these random correction terms, we prove a local law up to a shifted edge and recover the rigidity of extremal eigenvalues under some corrections for $pN>N^{\epsilon}$.
\end{abstract}

\maketitle

\section{Introduction}
An $N\times N$ random matrix is said to be \emph{sparse} if the number of nonzero entries per row is much less than $N$ on average as $N\to\infty$. A motivating example is the adjacency matrix of the sparse \emph{Erd\H{o}s-R\'{e}nyi graph} $\mathcal{G}(N,p)$, a random graph on $N$ vertices in which each edge independently exists with probability $p\ll 1$. As the canonical model of random graphs, Erd\H{o}s-R\'{e}nyi graphs have been used in many areas such as combinatorics, network theory and mathematical physics \cite{Bollobas01, Chung97, JLR00}. It is important to study the spectral statistics of Erd\H{o}s-R\'{e}nyi graphs since their eigenvalues and eigenvectors contain fundamental information, and
accordingly, have many applications: combinatorial optimization, spectral partitioning, community detection, etc \cite{BS16,Chung97,MP91,PSL90}.

The Erd\H{o}s-R\'{e}nyi graph $\mathcal{G}(N,p)$ exhibits some threshold phenomena with regard to the probability $p$. In the seminal works \cite{ER59, ER60}, a connectivity transition was shown around $pN=\log{N}$:
\begin{enumerate}
	\item If $pN>(1+\eps)\log{N}$, the Erd\H{o}s-R\'{e}nyi graph $\mathcal{G}(N,p)$ is almost surely connected.  
	\item If $pN<(1-\eps)\log{N}$, the Erd\H{o}s-R\'{e}nyi graph $\mathcal{G}(N,p)$ almost surely contains isolated vertices, and thus it is disconnected.
\end{enumerate}
In this paper, we consider the regime $pN>N^{\eps}$, which is included in the super-critical regime, $pN>(1+\eps)\log{N}$. A pioneering work for this case is a series of papers by Erd\H{o}s, Knowles, Yau and Yin \cite{EKYY12,EKYY13}. They established many outstanding results such as a local law, eigenvalue rigidity and universality for sparse random matrices.

A so-called local law is an essential ingredient to understand the local behavior of eigenvalues. A three-step strategy based on local laws was developed to solve the well-known \emph{Wigner-Dyson-Mehta} universality conjecture for Wigner matrices \cite{ESY11,EYY12-1,EYY12-2}, an open problem for nearly 50 years. A similar approach is believed to be valid for sparse matrices but sparsity introduces some additional technical challenges to overcome, especially near the spectral edge. Recently, bulk universality was established when $pN > N^{\eps}$ \cite{He20, HLY15} but progress on edge universality has been slower since extreme eigenvalues fluctuate in a more pronounced way for sparse matrices. In \cite{EKYY12,EKYY13}, both rigidity and universality of the extremal eigenvalues were proved only for $pN>N^{2/3+\eps}$. One reason for the restriction on the regime of $pN$, is that large deviation estimates for sparse matrices are not powerful enough: they contain a fixed power of $(pN)^{-1/2}$. (See \cite[Lemma 3.8]{EKYY13} for more details.) 

Later Lee and Schnelli discovered that local law estimates at the edge can be strengthened by introducing a deterministic correction of the semicircle law \cite{LS18}. Using their improved local law, they established edge rigidity and universality for $pN>N^{1/3+\eps}$ with respect to a deterministically shifted edge $\mathcal{L}=2+O((pN)^{-1})$ where we note that $+2$ is the right edge of the standard semicircle law. (See \cite[Theorem 2.9, Theorem 2.10]{LS18} for precise statements.) They also expected that the order of the fluctuations of the extremal eigenvalues would start to exceed $N^{-2/3}$, the typical order of Tracy-Widom fluctuations, when $pN < N^{1/3-\eps}$.

In recent work by Huang, Landon and Yau \cite{HLY20}, the authors indeed confirmed a transition from Tracy-Widom to Gaussian fluctuations for extreme eigenvalues around $pN\sim N^{1/3}$. The main observation is that the random correction term $\mathcal{X}$ defined by
\begin{align}\label{eq: mathcal X}
	\mathcal{X} \coloneqq \frac{1}{N}\sum_{i,j}\paren{h_{ij}^{2}-\frac{1}{N}},
\end{align}
where the entries of an $N\times N$ sparse random matrix are denoted by $(h_{ij})$, becomes the leading order term of the fluctuations as $pN < N^{1/3-\eps}$. In addition, they recovered edge rigidity and Tracy-Widom fluctuations with respect to the randomly shifted edge $L+\mathcal{X}$ under the condition that $N^{2/9+\eps} < pN < N^{1/3-\eps}$. One of their novel ideas involved subtracting the main source of Gaussian fluctuations $\mathcal{X}$. However the results in \cite{HLY20} are valid only when $pN > N^{2/9+\eps}$, due to technical constraints.

Finally, He and Knowles resolved some subtle technical issues and showed that the extremal eigenvalues have Gaussian fluctuations in the regime $N^{\eps} < pN < N^{1/3-\eps}$ \cite{HK21}. Thus, the random quantity $\mathcal{X}$ is the leading order term of fluctuations when $pN < N^{1/3-\eps}$. One can also ask about higher (sub-leading) order fluctuations of extremal eigenvalues. One might anticipate higher order fluctuations to exist and that new random correction terms should capture such fluctuations. Indeed in \cite{HLY20}, the authors suspected higher order fluctuations would depend on subgraph counts. In \cite{HK21}, the authors claimed, for new random corrections terms, that there is an infinite hierarchy of random variables which is strongly correlated and asymptotically Gaussian, and the random variable $\mathcal{X}$ would be only the leading order fluctuations of extremal eigenvalues.

In this paper, we investigate such higher order fluctuations of extremal eigenvalues of sparse random matrices in the regime $pN > N^{\eps}$. We construct a series of random correction terms $(\mathcal{Z}_{n})$ to control higher order fluctuations of extremal eigenvalues. (See \eqref{eq: form of correction terms} for more detail.) Using these random correction terms $(\mathcal{Z}_{n})$, we derive a new (randomly) shifted edge $\mathcal{L}\equiv\mathcal{L}(\mathcal{Z}_{1}, \mathcal{Z}_{2}, \cdots, \mathcal{Z}_{\ell})$ which is a polynomial in the variables $(\mathcal{Z}_{n})_{1\le n\le \ell}$, and also obtain a local law near this shifted edge $\mathcal{L}$ (Theorem \ref{thm: local law}). As a result, we prove that the extremal eigenvalues are concentrated at the shifted edge $\mathcal{L}$ with scale $N^{-2/3}$, which is precisely the eigenvalue rigidity estimate near the edge (Theorem \ref{thm: rigidity}).

One of the main ingredients is the so-called recursive moment estimate (RME), Proposition \ref{prop: RME}. The RME is a moment estimate of a self-consistent polynomial. In \cite{HK21,HLY20,LS18}, an approach using the RME was developed and a local law was proved as a consequence of the RME and stability analysis. The RME in \cite{HLY20,LS18} has a bound containing powers of $(pN)^{-1/2}$ so the resulting local law bound has powers of $(pN)^{-1/2}$ too. Inspired by \cite{HK21}, we tried to find a way to avoid such powers of $(pN)^{-1/2}$ in the RME bound. As it turns out, by including sufficiently many random correction terms, we are able to obtain the RME without any powers of $(pN)^{-1/2}$, which consequently resulted in a nearly optimal local law bound. From standard arguments using the local law \cite{ERSY10}, we can then establish the desired rigidity estimate.

\section{Definitions and main results}
\subsection{Basic notions and conventions}
To formulate the main results, we introduce some definitions and the notation as preliminaries. Let $N$ be the fundamental positive-integer parameter. 
Dealing with $N$-dependent quantities, we almost always omit the $N$-dependence to ease notation, unless otherwise stated.
We use $c>0$ for a small universal constant while we denote by $C>0$ a large universal constant throughout this paper. Their values may change between occurrences.
\begin{definition}[High probability event]
	Let $E\equiv E_{N}$ be an event parametrized by $N$. We say that $E$ holds \emph{with very high probability} if for any (large) $D>0$ there exists a constant $C$ such that $\prob(\Omega^{c})\le CN^{-D}$ for all $N$.
\end{definition}
\begin{definition}[Stochastic domination]
	Let $X\equiv X_{N}$ and $Y\equiv Y_{N}$ be random (or deterministic) variables depending on $N$. We say that $X$ is \emph{stochastically dominated} by $Y$ if for any (small) $\eps>0$ and (large) $D>0$ there exists a constant $C$ such that $\prob[|X|>N^{\eps}Y]\le CN^{-D}$ for all $N$. If $Y$ stochastically dominates $X$, we write $X\prec Y$ or $X=\bigO_{\prec}(Y)$.
\end{definition}
We have some notational conventions for the asymptotics of the limit $N \to \infty$. The symbols $\bigO(\cdot)$ and $o(\cdot)$ are used for the standard big-O and little-o notation. Let $x\equiv x_{N}$ and $y\equiv y_{N}$ be nonnegative $N$-dependent quantities. We write $x\lesssim y$ if there exist a constant $C>0$ such that $x\le Cy$ for all $N$. We use the notation $x\asymp y$ if $x\lesssim y$ and $y\lesssim x$. In this paper, we use the symbol $\ll$ in a less standard way. We write $x\ll y$ if there exist a constant $c>0$ satisfying $N^{c}x\lesssim y$.

For a complex number $w\in\C$, we denote its real part by $\Re{w}$ and its imaginary part by $\Im{w}$ throughout this paper.

\subsection{Main results}
In this paper, we consider the class of sparse random matrices introduced in \cite{EKYY12,EKYY13,HLY20}.
\begin{definition}[Sparse random matrices]\label{def: sparse RM}
	Let $H=(h_{ij})_{1\le i,j \le N}$ be an $N\times N$ real symmetric random matrix. The entries $(h_{ij})$ are independent up to the symmetry constraint $h_{ij}=h_{ji}$ and have the same moments. We assume that the entries $(h_{ij})$ have zero mean and $(1/N)$-variance, i.e.
	\begin{align*}
		\expec{h_{ij}} = 0, \quad \expec{h_{ij}^{2}} = \frac{1}{N}.
	\end{align*}
	For $p\ge 2$, the $p$-th cumulant $\cumul_{p}$ of $h_{ij}$ is given by
	\begin{align*}
		\cumul_{p} = \frac{(p-1)!\const_{p}}{Nq^{p-2}},
	\end{align*}
	where $q=N^{b}$ is the sparsity parameter with fixed (auxiliary parameter) $b$ satisfying $0<b<\frac{1}{2}$. We further assume
	\begin{align*}
		|\const_{p}|\le C_{p}
	\end{align*}
	for some constant $C_{p}>0$, and
	\begin{align*}
		\const_{4}\gtrsim 1.
	\end{align*}
\end{definition}

Throughout this paper, we denote by $H$ the sparse random matrix as in Definition \ref{def: sparse RM}. We define the Green function of $H$ by 
\begin{align*}
	G(z)\coloneqq(H-z)^{-1}, \qquad z\in\C_{+},
\end{align*}
and use the notation
\begin{align*}
	m(z)\coloneqq\frac{1}{N}\text{Tr}G(z), \qquad z\in\C_{+},
\end{align*}
for the normalized trace of the Green function. Let $\mu$ be the empirical eigenvalue distribution of $H$ given by
\begin{align}\label{eq: ESD}
	\mu \coloneqq \frac{1}{N}\sum_{i=1}^{N}\delta_{\lambda_{i}},
\end{align}
where we denote by $\lambda_{1}\ge\lambda_{2}\ge\cdots\ge\lambda_{N}$ the ordered eigenvalues of $H$. Note that the normalized trace $m(z)$ is the Stieltjes transform of the empirical eigenvalue distribution $\mu$. We introduce the random polynomial $P(z,w)$ defined through
\begin{align*}
	P(z,w) \coloneqq 1 + zw + Q(w), \qquad z,w\in\C_{+},
\end{align*}
where the random polynomial $Q(w)$ is given by
\begin{align*}
	Q(w) = \sum_{n=1}^{\ell} \mathcal{Z}_{n}w^{2n},
\end{align*}
for a large integer $\ell\ge 1$ to be chosen later, and each random coefficient $\mathcal{Z}_{n}$ is a polynomial in the variables $(h_{ij})_{1\le i,j\le N}$. There are two important assumptions for the random coefficients $(\mathcal{Z}_{n})_{1\le n\le \ell}$.
\begin{assumption}\label{assump: random correction term 1}
	For the sparsity parameter $q$ as in Definition \ref{def: sparse RM}, we have
	\begin{align}\label{eq: asymp 1}
	\mathcal{Z}_{1}-1 \prec \frac{1}{\sqrt{N}q},
	\end{align}
	and
	\begin{align}\label{eq: asymp 2}
	\mathcal{Z}_{n} \prec \frac{1}{q^{n-1}}, \quad 1\le n\le\ell.
	\end{align}
\end{assumption}
\begin{assumption}\label{assump: random correction term 2}
	For all $i,j\in\{1,2,\cdots,N\}$, the following holds:
	\begin{align}\label{eq: asymp 3}
	\frac{\partial \mathcal{Z}_{n}}{\partial h_{ij}} \prec \frac{1}{N}, \quad 1\le n\le\ell.
	\end{align}
\end{assumption}

If the above two assumptions hold, then we call the random polynomial $P(z,w)$ and the associated equation $P(z,\widetilde{m}(z))=0$ \emph{self-consistent polynomial} and \emph{self-consistent equation} respectively. Some important properties of a solution $\widetilde{m}$ of the self-consistent equation are described in the following lemma.

\begin{lemma}\label{lem: property of tilde m}
	Suppose Assumption \ref{assump: random correction term 1} holds. Then, there exists an algebraic function $\widetilde{m}: \C_{+}\to\C_{+}$, such that the following properties hold:
	\begin{enumerate}
		\item The function $\widetilde{m}$ is a solution of the self-consistent equation $P(z,\widetilde{m}(z))=0$.
		\item The function $\widetilde{m}$ is the Stieltjes transform of a random probability measure $\widetilde{\rho}$ and the support of $\widetilde{\rho}$ is $[-\widetilde{L},\widetilde{L}]$, where $\widetilde{L}$ and $-\widetilde{L}$ are called (spectral) edges.
		\item The probability measure $\widetilde{\rho}$ has strictly positive density on $(-\widetilde{L},\widetilde{L})$ and square root behavior at the edges.
		\item For any (large) $C>0$, there exists a polynomial $\asedge\equiv\asedge(\mathcal{Z}_{1}, \mathcal{Z}_{2}, \cdots, \mathcal{Z}_{\ell})$ such that 
		\begin{align*}
			\widetilde{L} = \asedge + \bigO_{\prec}(N^{-C}).
		\end{align*}
		Thus, an approximate location of the edge is denoted by $\asedge$. 
		\item Consider $z\in\C_{+}$ and set $\kappa \coloneqq \text{dist}(\Re(z),\{-\widetilde{L},\widetilde{L}\})$. In a neighborhood of the support $[-\widetilde{L},\widetilde{L}]$, we have
		\begin{align*}
			\Im \widetilde{m}(z) \asymp \begin{cases}
				\sqrt{\kappa + \eta}, & \Re{z}\in[-\widetilde{L},\widetilde{L}], \\
				\frac{\eta}{\sqrt{\kappa + \eta}}, & \Re{z}\notin[-\widetilde{L},\widetilde{L}],
			\end{cases}
		\end{align*}
		\begin{align*}
			|\partial_{2}P(z,\widetilde{m}(z))|\asymp\sqrt{\kappa+\eta},
		\end{align*}
		\begin{align*}
			\partial_{2}^{2}P(z,\widetilde{m}(z)) = 2+\bigO(q^{-1}),
		\end{align*}
		where we write $\partial_{2}P(x,y)\equiv\partial_{y}P(x,y)/\partial y$.
	\end{enumerate}
\end{lemma}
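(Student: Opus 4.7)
The overall strategy is to treat the self-consistent polynomial
$P(z,w) = 1 + zw + \mathcal{Z}_1 w^2 + \mathcal{Z}_2 w^4 + \cdots + \mathcal{Z}_{\ell} w^{2\ell}$
as a small algebraic perturbation of the semicircle polynomial $P_{sc}(z,w) = 1 + zw + w^2$, using \eqref{eq: asymptotics for correction term 2} together with the observation that $\E[\mathcal{Z}_1] = 1 + \bigO(N^{-1})$, so that $\mathcal{Z}_1 - 1 = \bigO_{\prec}(N^{-1/2})$ and $\mathcal{Z}_n = \bigO_{\prec}(q^{-(n-1)})$ for $n\ge 2$. Throughout, I would work on the very-high-probability event on which these bounds hold; since every conclusion of the lemma is stated either up to a multiplicative $\asymp$ or up to $\bigO_{\prec}(N^{-C})$ error, this truncation is harmless.

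For items (1)--(3), standard algebraic function theory applied to $P(z,w)=0$ gives $2\ell$ analytic branches on $\C_+$ away from the finitely many branch points (the zeros of the resultant $\mathrm{Res}_w(P,\partial_2 P)$). The unique branch that continuously deforms to the semicircle Stieltjes transform $m_{sc}$ when the corrections are switched off to their semicircle limits extends to a well-defined analytic function $\widetilde{m}:\C_+\to\C$; by a connectedness/continuity argument $\Im \widetilde{m}>0$ throughout $\C_+$, and the asymptotic $\widetilde{m}(z) = -z^{-1} + \bigO(|z|^{-2})$ as $|z|\to\infty$ follows from the leading balance $1 + z\widetilde{m} \approx 0$. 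Thus $\widetilde{m}$ is the Stieltjes transform of a probability measure $\widetilde{\rho}$, and the support structure is read off from identifying its real branch points as $\pm \widetilde{L}$, which lie close to $\pm 2$ and are of square-root type by continuity of the discriminant in the corrections; strict positivity of the density on $(-\widetilde L,\widetilde L)$ follows from the non-vanishing of $\partial_2 P(\cdot,\widetilde m(\cdot))$ in the interior.

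For items (4) and (5), the edge $\widetilde{L}$ is characterized by the double-root system $P(\widetilde{L},\widetilde{m}(\widetilde{L})) = \partial_2 P(\widetilde{L},\widetilde{m}(\widetilde{L})) = 0$. Eliminating $\widetilde{m}$ via the resultant expresses $\widetilde{L}$ as the distinguished root of a polynomial in $z$ whose coefficients are themselves polynomials in $(\mathcal{Z}_n)_{1\le n\le\ell}$ and whose value at the semicircle corner equals $2$. Expanding this root as a convergent Taylor series in the small quantities $\mathcal{Z}_1-1,\mathcal{Z}_2,\ldots,\mathcal{Z}_{\ell}$ and truncating at sufficiently high order (which depends on $C$ and forces $\ell$ to be large enough) produces the polynomial $\asedge$ with remainder $\bigO_{\prec}(N^{-C})$. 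The asymptotics in item (5) then follow from a Puiseux expansion of $\widetilde{m}$ at the edge: since $\partial_2^2 P(\widetilde{L},\widetilde{m}(\widetilde{L})) = 2\mathcal{Z}_1 + 12\mathcal{Z}_2\widetilde{m}(\widetilde{L})^2 + \cdots = 2 + \bigO_{\prec}(q^{-1})$ is bounded away from zero, the root is of square-root type, yielding $\widetilde{m}(z) - \widetilde{m}(\widetilde{L}) \asymp \sqrt{\widetilde{L}-z}$, from which the stated asymptotics for $\Im\widetilde{m}$ and $|\partial_2 P(z,\widetilde{m}(z))| \asymp |\widetilde{m}(z) - \widetilde{m}(\widetilde{L})| \asymp \sqrt{\kappa+\eta}$ both follow.

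The hard part will be item (4): producing an \emph{explicit} polynomial $\asedge$ in the $\mathcal{Z}_n$'s that approximates $\widetilde{L}$ to \emph{arbitrary} polynomial accuracy $N^{-C}$. One must track the combinatorics of the Taylor expansion of an algebraic root in many small random variables and verify that, for any prescribed $C$, the truncation order (and hence $\ell$) can be taken large enough that the remainder is indeed $\bigO_{\prec}(N^{-C})$. The remaining items reduce to routine perturbation theory around the semicircle law, so the heart of the lemma really lies in this explicit polynomial approximation of the random edge.
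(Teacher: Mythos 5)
Your proposal is correct in substance but packages the argument differently from the paper. The paper does not invoke abstract branch theory or resultants: it works with the explicit functional inverse $R(w) = -\frac{1}{w} - w - (\mathcal{Z}_{1}-1)w - \sum_{n=2}^{\ell}\mathcal{Z}_{n}w^{2n-1}$ (so that $P(z,w)=0 \iff R(w)=z$), locates the two critical points $\pm\tau$ of $R$ by an explicit iterative scheme (setting $w=1-\eps$, choosing $\eps_{0}=\frac12\big((\mathcal{Z}_{1}-1)+\sum_{n\ge2}(2n-1)\mathcal{Z}_{n}\big)$ and successively cancelling errors to build a sequence $\{\eps_{m}\}$ of polynomials in the $\mathcal{Z}_{n}$), defines $\widetilde{L}=R(-\tau)$ so that $\asedge$ drops out directly as the truncated expansion, and then uses Rouch\'{e}'s theorem plus a maximum-principle argument to show $R$ is biholomorphic from the region bounded by the solution curve $\Gamma$ and $[-\widetilde{L},\widetilde{L}]$ onto $\C_{+}$, taking $\widetilde{m}=R^{-1}$; the Stieltjes-transform property, positivity of the density, and the square-root edge behavior are then read off from $R$ and its Taylor expansion at $\pm\tau$. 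Your route via the double-root system $P=\partial_{2}P=0$, resultant elimination, the implicit function theorem for the distinguished root near $2$, and a Puiseux expansion at the edge is a legitimate alternative and buys a slightly more systematic description of $\asedge$ (a Taylor series of an algebraic root in the small variables), whereas the paper's inverse-function construction buys for free the delicate global facts that you treat somewhat glibly: that the chosen branch is single-valued and analytic on all of $\C_{+}$ (no branch points of the relevant branch off the real axis, the other $2\ell-2$ roots of $P(z,\cdot)$ staying far away), that $\Im\widetilde{m}>0$, and that the density is strictly positive on the whole interval $(-\widetilde{L},\widetilde{L})$ --- these are exactly what the Rouch\'{e}/biholomorphism step is for, and in a full write-up you would need to supply an equivalent argument rather than a ``connectedness/continuity'' appeal. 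One genuine misstatement: the truncation order needed for $\widetilde{L}=\asedge+\bigO_{\prec}(N^{-C})$ depends only on $C$ and the sparsity exponent $b$ (since $\mathcal{Z}_{1}-1\prec N^{-1/2}q^{-1}$ and $\mathcal{Z}_{n}\prec q^{-(n-1)}$, powers of $q^{-1}=N^{-b}$ do the work); it does not force $\ell$ to be large --- $\ell$ is fixed by the recursive moment estimate (the condition $\ell>2/(3b)$), not by this lemma.
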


The proof of Lemma \ref{lem: property of tilde m} is essentially same with that of \cite[Proposition 2.5, Proposition 2.6]{HLY20} and \cite[Lemma 4.1]{LS18} but we write it up in Appendix \ref{sec: properties of tilde m} for completeness. Our first main result is the local law up to the edge.  

\begin{theorem}[Local law near the edge]\label{thm: local law}
	Let $H$ be as in Definition \ref{def: sparse RM} with any small $b>0$. For $\ell\ge 1$ large enough, we can construct the random coefficients $(\mathcal{Z}_{n})_{1\le n\le \ell}$ such that the following statements hold:
	\begin{enumerate}
		\item Assumption \ref{assump: random correction term 1} and Assumption \ref{assump: random correction term 2} are satisfied.
		\item Let $\widetilde{m}$ and $\asedge$ be as in Lemma \ref{lem: property of tilde m}. We define $\tz$ by setting $\tz\coloneqq\asedge+E+i\eta$, and assume $|E|\le 1$ and $N^{-1}\ll\eta\le 1$. Then, we have
		\begin{multline}\label{eq: local law}
		\size{m(\tz)-\widetilde{m}(\tz)} \prec \paren{\frac{\boldsymbol{\phi}}{N\eta}}^{1/2} + (\sqrt{\kappa+\eta})^{1/4}\paren{\frac{\boldsymbol{\phi}}{N\eta}}^{3/8} + \frac{1}{N^{1/4}}\paren{\frac{\boldsymbol{\phi}}{N\eta}}^{1/8} + (\sqrt{\kappa+\eta})^{1/4}\paren{\frac{\boldsymbol{\phi}}{N^{2}\eta^{2}}}^{1/4} \\
		+ \frac{1}{N^{1/2}\eta^{1/4}} + \frac{1}{N\eta} + \frac{(\sqrt{\kappa+\eta})^{2/5}}{(N\eta)^{3/5}} + \frac{1}{N^{2/7}(N\eta)^{1/7}} + \frac{(\sqrt{\kappa+\eta})^{1/3}}{(N\eta)^{2/3}},
		\end{multline}
		where $\kappa \coloneqq \text{dist}(\Re(\tz),\{-\widetilde{L},\widetilde{L}\})$ and the parameter $\boldsymbol{\phi}$ is given by
		\begin{align}\label{eq: bold phi}
		\boldsymbol{\phi}\equiv\boldsymbol{\phi}(\tz)\coloneqq \begin{cases}
		\sqrt{\kappa + \eta}, & \Re{\tz}\in[-\widetilde{L},\widetilde{L}], \\
		\frac{\eta}{\sqrt{\kappa + \eta}}, & \Re{\tz}\notin[-\widetilde{L},\widetilde{L}].
		\end{cases}
		\end{align}
	\end{enumerate}
    We call these constructed random coefficients $\{\mathcal{Z}_{n}\}_{n=1}^{\ell}$ random correction terms.
\end{theorem}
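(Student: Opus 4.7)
The plan is to apply the classical ``recursive moment estimate + stability + bootstrap'' pipeline of \cite{LS18,HLY20,HK21}, now with the enlarged self-consistent polynomial $P(z,w)=1+zw+\sum_{n=1}^{\ell}\mathcal{Z}_{n}w^{2n}$. First I would invoke Proposition \ref{prop: RME}, a high-moment bound on $P(\tz,m(\tz))$, and combine it with Markov's inequality, absorbing any self-referential powers of $\size{P(\tz,m(\tz))}$ on the right-hand side via Young's inequality, to obtain a pointwise high-probability estimate of the form
\begin{align*}
\size{P(\tz, m(\tz))} \prec \Phi(\tz),
\end{align*}
where $\Phi$ is a sum of elementary control parameters built from $(N\eta)^{-1}$, $\sqrt{\kappa+\eta}$, $N^{-1/4}$ and their products. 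The decisive point is that, once all correction terms up to $\mathcal{Z}_{\ell}$ are included in $P$, no factor of $q^{-1/2}$ survives in $\Phi$; this is the improvement over \cite{HLY20}.

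I would then convert $\size{P(\tz,m(\tz))}\prec\Phi$ into a bound on $m(\tz)-\widetilde m(\tz)$ via stability of the self-consistent equation. Since $P(\tz,\widetilde m(\tz))=0$, Taylor-expanding $P$ around $\widetilde m(\tz)$ in $w$ and using from Lemma \ref{lem: property of tilde m} that $\size{\partial_2 P(\tz,\widetilde m(\tz))}\asymp\sqrt{\kappa+\eta}$ and $\partial_2^2 P(\tz,\widetilde m(\tz))=2+\bigO(q^{-1})$ yields the quadratic-type inequality
\begin{align*}
\sqrt{\kappa+\eta}\,\size{m(\tz)-\widetilde m(\tz)} \lesssim \size{P(\tz, m(\tz))} + \size{m(\tz)-\widetilde m(\tz)}^{2} + (\text{higher}),
\end{align*}
whose higher-degree corrections are controlled by an a priori smallness on $m(\tz)-\widetilde m(\tz)$. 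This produces two branches: the stable branch $\size{m-\widetilde m}\lesssim\Phi/\sqrt{\kappa+\eta}$, which after substituting the explicit form of $\Phi$ reproduces the right-hand side of \eqref{eq: local law}, and the unstable branch $\size{m-\widetilde m}\gtrsim\sqrt{\kappa+\eta}$ which must be excluded.

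The unstable branch is ruled out by a standard continuity/bootstrap argument in $\eta$: at $\eta=1$, classical a priori estimates for sparse matrices \cite{EKYY12,EKYY13} force $\size{m(\tz)-\widetilde m(\tz)}\ll 1$, placing the Green function on the stable branch; since both $m(\tz)$ and $\widetilde m(\tz)$ are Lipschitz in $\eta$ with a $\text{poly}(N)$-constant, a dyadic-grid propagation argument transfers the stable bound down to every $\eta$ in the range $N^{-1}\ll\eta\le 1$, producing \eqref{eq: local law} uniformly.

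The main obstacle is Proposition \ref{prop: RME} itself. Every cumulant expansion of $\E[P(\tz,m(\tz))\,\overline{P(\tz,m(\tz))}^{p-1}]$ produces, through the higher cumulants $\cumul_{s}\sim 1/(Nq^{s-2})$ of the sparse entries, ``anomalous'' terms of schematic form $q^{-k/2}\prod_{l}h_{i_l j_l}^{s_l+1}$ that, if not absorbed, generate the $q^{-1/2}$ factors which prevented an optimal local law in \cite{HLY20} for $q\le N^{2/9}$. The novel technical step is to show that each such anomalous term can be identified, up to admissible errors, with a contribution of the correction term $\mathcal{Z}_{n}$ of the form \eqref{eq: form of correction terms} with $2n=s_{1}+\cdots+s_{k}+k$, and hence absorbed into $P(\tz,w)$ itself through the $\mathcal{Z}_{n}w^{2n}$ summands. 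Tracking this combinatorial absorption up to the order $\ell=\ell(b)$ dictated by the sparsity parameter $b$ (with $\ell\to\infty$ as $b\to 0$) is the heart of the argument; once it is in place, the pipeline above delivers \eqref{eq: local law}.
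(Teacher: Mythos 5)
Your overall pipeline (recursive moment estimate, then stability of the self-consistent equation) is indeed the paper's strategy, but the stability step as you describe it would not deliver \eqref{eq: local law}, and at one point it is actually wrong. Near the edge you cannot run the linear-response (``stable branch'') bound $\size{m-\widetilde m}\lesssim \size{P}/\sqrt{\kappa+\eta}$: the coefficient $\partial_{2}P(\tz,\widetilde m(\tz))\asymp\sqrt{\kappa+\eta}$ degenerates, and at the relevant scales (e.g.\ $\kappa\asymp\eta\asymp N^{-2/3}$) $\size{P(\tz,m(\tz))}$ need not be small compared with $\kappa+\eta$, so the quadratic term dominates and the two branches are not separated by a gap that a dyadic continuity argument can propagate. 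The paper instead proves only the quadratic-stability bound $\Lambda=\size{m-\widetilde m}\prec\sqrt{\size{P(\tz,m(\tz))}}$, valid in both regimes $b^{2}\le\size{2fa}$ and $b^{2}>\size{2fa}$, with the correct root selected once via the $\eta\to\infty$ behaviour and an a priori bound $\Lambda\ll N^{-c}$ used only to control the cubic remainder $R(g)$. This is also visible in the statement itself: the right-hand side of \eqref{eq: local law} has square-root structure, e.g.\ $\paren{\boldsymbol{\phi}/(N\eta)}^{1/2}$ and $(\sqrt{\kappa+\eta})^{1/4}\paren{\boldsymbol{\phi}/(N\eta)}^{3/8}$, not the form $\boldsymbol{\phi}/(N\eta\sqrt{\kappa+\eta})$ that your stable-branch bound would give, so your claim that substituting $\Phi$ into $\Phi/\sqrt{\kappa+\eta}$ ``reproduces'' \eqref{eq: local law} does not hold.

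A second gap is your treatment of the recursive moment estimate. The control parameter $\Phi_{r}$ in Proposition \ref{prop: RME} is not a deterministic function of $\tz$: it involves $\Im m(\tz)$, $\size{\partial_{2}P}$ and powers of $\size{P}$ inside the expectation. Hence one cannot obtain a pointwise bound $\size{P(\tz,m(\tz))}\prec\Phi(\tz)$ with a deterministic $\Phi$ by Markov plus Young alone; the paper first inserts $\Im m\lesssim\boldsymbol{\phi}+\Lambda$ and $\size{\partial_{2}P}\lesssim\sqrt{\kappa+\eta}+\Lambda$, then uses $\E\Lambda^{4r}\le C\,\E\size{P}^{2r}$ and closes a self-consistent moment inequality with $\Lambda$-dependent terms on the right. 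The ensuing case analysis is precisely what produces the fractional-exponent terms $(\sqrt{\kappa+\eta})^{2/5}(N\eta)^{-3/5}$, $N^{-2/7}(N\eta)^{-1/7}$ and $(\sqrt{\kappa+\eta})^{1/3}(N\eta)^{-2/3}$ in \eqref{eq: local law}; your outline omits this absorption, and without it the stated list of exponents cannot be reached.
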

\begin{remark}
	In contrast to the previous results (e.g.~\cite[Theorem 2.8]{EKYY13}, \cite[Theorem 2.4]{LS18}, and \cite[Theorem 2.1]{HLY20}), the local law estimate \eqref{eq: local law} does not contain any power of $q^{-1}$ since we implicitly assume that $\ell$ is large enough. The condition that $\ell > 2/(3b)$ is enough to guarantee an optimal local law in the vicinity of the edge. Compared with \cite[Equations (6.2), (6.4), (9.1)-(9.2)]{HK21}, our results cover a larger spectral domain.
\end{remark}
\begin{remark}
	The estimate \eqref{eq: local law} is perhaps daunting at first glance. The main thing to keep in mind is that the order of the bound is strong enough to be used for spectral analysis near the edge. For example, if $\kappa=\eta=N^{-2/3}$, then by \eqref{eq: local law} it is straightforward  to see that
	\begin{align*}
		\size{m(\tz)-\widetilde{m}(\tz)} \prec N^{-1/3}.
	\end{align*}
	Moreover, we can improve the local law estimate \eqref{eq: local law} outside the spectrum. See \eqref{eq: imporved local law outside the spectrum} for more details.
\end{remark}
\begin{remark}
	We can write a random correction term $\mathcal{Z}_{n}$ explicitly. For example,
	\begin{align*}
		\mathcal{Z}_{1} = \frac{1}{N}\sum_{i\neq j}h_{ij}^{2} = 1 + \mathcal{X} + \bigO_{\prec}\paren{\frac{1}{N}},
	\end{align*}
	(See \eqref{eq: mathcal X} for the definition of $\mathcal{X}$.) and
	\begin{align*}
		\mathcal{Z}_{2} &= \frac{1}{N}\sum_{i\neq j}h_{ij}^{4} - \frac{2}{N^{2}}\sum_{i\neq j\neq x\neq y} h_{ij}^{2}(h_{xy}^{2}-\expec{h_{xy}^{2}})\\
		&\qquad + \frac{1}{N}\sum_{i\neq j\neq y}h_{ij}^{2}(h_{iy}^{2}-\expec{h_{iy}^{2}})
		+ \frac{1}{N}\sum_{i\neq j \neq x}h_{ij}^{2}(h_{xj}^{2}-\expec{h_{xj}^{2}}),
	\end{align*}
	where we denote by $\sum_{i\neq j\neq x\neq y}$ the sum over all distinct indexes, similarly for $\sum_{i\neq j\neq y}$ and $\sum_{i\neq j \neq x}$.
	Note that $\mathcal{Z}_{1}$ contains the random correction term $\mathcal{X}$ introduced in \cite{HLY20} and the first term of $\mathcal{Z}_{2}$,
	\begin{align*}
		\frac{1}{N}\sum_{i\neq j}h_{ij}^{4},
	\end{align*}
	has a mean of the order $q^{-2}$ corresponding to the deterministic correction term introduced in \cite{LS18}. For $n\ge3$, we can still calculate $\mathcal{Z}_{n}$ but the explicit form is more complicated.
\end{remark}

As a result of Theorem \ref{thm: local law}, we can show that the extremal eigenvalues has fluctuations of order $N^{-2/3}$ with respect to the approximate location of the edge, $\asedge$. Recall that the ordered eigenvalues of $H$ is denoted by $$\lambda_{1}\ge\lambda_{2}\ge\cdots\ge\lambda_{N}.$$
\begin{theorem}[Recovery of edge rigidity]\label{thm: rigidity}
	Let $H$ be as in Definition \ref{def: sparse RM} with any small $b>0$. For $\ell\ge 1$ large enough, let $\{\mathcal{Z}_{n}\}_{n=1}^{\ell}$ be as in Theorem \ref{thm: local law}, and let $\asedge$ be as in Lemma \ref{lem: property of tilde m}. Fix an integer $k\ge 1$. We have for all $1\le i\le k$,
	\begin{align*}
		|\lambda_{i} - \asedge| \prec N^{-2/3}.
	\end{align*}
\end{theorem}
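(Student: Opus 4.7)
The plan is to deduce the rigidity estimate from Theorem \ref{thm: local law} by adapting the standard strategy of \cite{ERSY10}; see also the analogous deductions in \cite{LS18, HLY20, HK21}. Since Lemma \ref{lem: property of tilde m}(4) gives $\edge = \asedge + \bigO_{\prec}(N^{-C})$ for arbitrarily large $C$, it suffices to prove $|\lambda_{i} - \edge| \prec N^{-2/3}$ for all $1\le i\le k$, after which the error between $\edge$ and $\asedge$ is absorbed harmlessly.

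First, I would establish the upper tail: with very high probability no eigenvalue exceeds $\edge + N^{-2/3+\eps}$, for any fixed $\eps>0$. Set $E_{+} := \edge + N^{-2/3+\eps}$ and choose $\eta$ somewhat smaller than $N^{-2/3+\eps}$ (say $\eta = N^{-2/3+\eps/8}$). For energies $E\ge E_{+}$, Lemma \ref{lem: property of tilde m}(5) yields $\Im\widetilde{m}(E+i\eta) \asymp \eta/\sqrt{\kappa+\eta}$, which is very small. Using the local law \eqref{eq: local law} together with the improved bound outside the spectrum referenced in the Remark after Theorem \ref{thm: local law} (namely \eqref{eq: imporved local law outside the spectrum}), one controls $\Im m(E+i\eta)$ by a quantity strictly less than $1/(2N\eta)$. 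Any eigenvalue $\lambda_{j}\in[E-\eta,E+\eta]$ would force $\Im m(E+i\eta)\ge 1/(2N\eta)$, a contradiction. A union bound over a polynomial net of energies $E\in[E_{+},N^{100}]$, combined with the crude a priori bound $\|H\|\le N^{100}$, then rules out any eigenvalue above $E_{+}$.

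Second, I would obtain the matching lower bound by comparing the empirical counting function $n(E):=N^{-1}|\{i:\lambda_{i}\ge E\}|$ with its deterministic analog $\widetilde{n}(E):=\int_{E}^{\edge}\widetilde{\rho}(x)\,dx$. A Helffer--Sj\"ostrand-type integration of the local law down to the scale $\eta=N^{-1-\eps}$ yields $|n(E)-\widetilde{n}(E)|\prec N^{-1}$ uniformly for $E\in[\edge-1,\edge]$. By the square-root edge behavior of $\widetilde{\rho}$ from Lemma \ref{lem: property of tilde m}(3), $\widetilde{n}(\edge-t)\asymp t^{3/2}$ for small $t\ge0$, so setting $E_{k}:=\edge-C_{k}N^{-2/3}$ with $C_{k}$ chosen so that $\widetilde{n}(E_{k})>2k/N$ gives $n(E_{k})\ge k/N$ with very high probability; hence $\lambda_{i}\ge E_{k}$ for all $1\le i\le k$.

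The main technical obstacle is verifying that each of the nine error terms on the right-hand side of \eqref{eq: local law}, evaluated at $\tz=E+i\eta$ with $\kappa\asymp N^{-2/3+\eps}$ and $\eta\ll\kappa$, is indeed $\prec 1/(N\eta)$ for the upper-tail argument, and integrates to $\prec N^{-1}$ for the counting-function argument. This is a routine but tedious case analysis in the exponents of $N$, $\eta$, and $\kappa$, and it is precisely here that the improved local law outside the spectrum is essential, since some terms in \eqref{eq: local law} saturate at $1/(N\eta)$ and would not, by themselves, rule out eigenvalues immediately beyond $\edge$.
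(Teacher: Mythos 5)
Your overall route is the same as the paper's: exclude eigenvalues above $\edge+N^{-2/3+\eps}$ by combining an improved bound on $\Im m$ outside the spectrum with the observation that an eigenvalue in $[E-\eta,E+\eta]$ forces $\Im m(\tz)\ge 1/(2N\eta)$, plus a lattice argument and an a priori bound; then control the eigenvalue counting function near the edge by Helffer--Sj\"ostrand and the square-root behavior of $\widetilde{\rho}$ to pin down $\lambda_1,\dots,\lambda_k$ from below. However, there are two concrete gaps. First, the estimate \eqref{eq: imporved local law outside the spectrum} is not an off-the-shelf input: as you yourself note, the bound \eqref{eq: local law} saturates at $1/(N\eta)$ (several of its terms equal $N^{-1/3}$ at $\eta=N^{-2/3}$), so the exclusion argument cannot run on \eqref{eq: local law} alone. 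In the paper the improvement is \emph{derived inside} the proof of Theorem \ref{thm: rigidity}, by going back to the recursive-moment bound \eqref{eq: RME and Young's ineq} and applying the dichotomy/stability lemma \cite[Lemma 2.13]{HLY20}, which exploits $|\partial_2 P(\tz,\widetilde m(\tz))|\asymp\sqrt{\kappa+\eta}$ for $E\ge N^{-2/3+c}$ to upgrade $\Lambda$ to $\ll 1/(N\eta)$. Your sketch assumes this improvement rather than proving it, and it is precisely the step that carries the proof; a blind argument needs to supply it.

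Second, the upper-tail closure as written fails beyond the domain of the local law: Theorem \ref{thm: local law} (and its off-spectrum improvement) is only stated for $|E|\le 1$, so a union bound over $E\in[E_+,N^{100}]$ backed merely by $\|H\|\le N^{100}$ cannot rule out eigenvalues in $(\asedge+1,N^{100}]$. One needs an $O(1)$-accurate a priori localization of $\lambda_1$, which is what the paper invokes (\cite[Lemma 4.4]{EKYY13} or \cite[Theorem 2.9]{LS18}) before running the lattice argument inside $|E|\le 1$. Two smaller points: the Helffer--Sj\"ostrand step cannot use the local law at $\eta=N^{-1-\eps}$, since the admissible range is $N^{-1}\ll\eta\le 1$; the standard argument works at $\eta=N^{-1+\eps}$ and treats smaller scales trivially. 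Also, uniform accuracy $|n(E)-\widetilde n(E)|\prec N^{-1}$ on all of $[\edge-1,\edge]$ is more than is needed (and would require checking the integrated errors of \eqref{eq: local law} over a macroscopic window); the paper only establishes the count in a window of width $N^{-2/3+c}$ below the edge, namely $\mu([\asedge-N^{-2/3+c},\infty))\asymp N^{-1+2c/3}$, which already forces $\lambda_k$ into that window for fixed $k$.
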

\begin{remark}
	This result extends \cite[Theorem 1.4]{HLY20} and reveals what exactly governs the higher order fluctuation of extremal eigenvalues of sparse random matrices when $1 \ll  q\ll N^{1/6}$.
\end{remark}
\begin{remark}
	As an application of Theorem \ref{thm: rigidity}, we consider the noise sensitivity problem of the top eigenvector, which is a unit eigenvector associated with the largest eigenvalue. Let $\vv$ be the top eigenvector of the random matrix $H$. We resample $k$ randomly chosen entries of the matrix $H$ and obtain another realization of the random matrix with top eigenvector $\vv^{[k]}$. According to \cite{BL21+}, if $q\gg N^{1/9}$ and $k\gg N^{5/3}$, the top eigenvectors, $\vv$ and $\vv^{[k]}$, are ``almost orthogonal'', i.e.
	\begin{align}\label{eq: noise sensitivity}
		\E\left\lvert \left\langle \vv,\vv^{[k]} \right\rangle \right\rvert = o(1).
	\end{align}
	Applying Theorem \ref{thm: rigidity}, it might be feasible to extend the noise sensitivity result \eqref{eq: noise sensitivity} to a larger regime, such as $q\gg 1$, by modifying the argument in \cite{BL21+}.
\end{remark}
\begin{remark}
	Very recently, Huang and Yau improved Theorem \ref{thm: rigidity} by including bulk eigenvalues \cite{HY22+}. Let $\widetilde{\rho}$ and $\widetilde{L}$ as in Lemma \ref{lem: property of tilde m}. According to \cite[Theorem 1.6]{HY22+}, for the classical eigenvalue locations $\gamma_{1}>\gamma_{2}>\cdots>\gamma_{N}$ defined by
	\begin{equation*}
	\frac{i-1/2}{N} = \int_{\gamma_{i}}^{\widetilde{L}} \widetilde{\rho}(x) dx,\quad 1\le i\le N,
	\end{equation*}
	we have
	\begin{equation*}
	|\lambda_{i} - \gamma_{i}| \prec N^{-2/3} \min(i,N-i+1)^{-1/3}, \quad 1\le i\le N.
	\end{equation*}
	In order to get \cite[Theorem 1.6]{HY22+}, they extended the main argument of this paper, which is the construction of higher order random correction terms $(\mathcal{Z}_{n})_{1\le n\le \ell}$.
\end{remark}
\begin{remark}
	By \cite[Theorem 1.2]{HK21}, in the regime $1 \ll q \ll N^{1/6}$, all nontrivial eigenvalues away from $0$ have Gaussian fluctuations, and the dominating term of the fluctuations is $\mathcal{X}=\mathcal{Z}_{1} - \expec{\mathcal{Z}_{1}}+\bigO_{\prec}(N^{-1})$. It might be possible to give another proof of \cite[Theorem 1.2]{HK21} by showing that
	\begin{equation*}
	\size{ \mathcal{Z}_{1} - \expec{\mathcal{Z}_{1}} } \gg \size{ \mathcal{Z}_{n} - \expec{\mathcal{Z}_{n}} },\quad n\ge 2,
	\end{equation*}
	and using some results in \cite{HY22+}.
\end{remark}

Since the typical order of Tracy-Widom fluctuation is $N^{-2/3}$, in the previous version of this paper, we conjectured that edge universality can be also recovered for $q \gg 1$. It was indeed confirmed by Huang and Yau very recently.
\begin{theorem}[Recovery of edge universality, {\cite[Theorem 1.7]{HY22+}}]
	Let $H$ be as in Definition \ref{def: sparse RM} with any small $b>0$. Let the random correction terms $\mathcal{Z}_{1},\mathcal{Z}_{2},\cdots,\mathcal{Z}_{\ell-1},\mathcal{Z}_{\ell}$ be as in Theorem \ref{thm: local law}, and let $\asedge$ be as in Lemma \ref{lem: property of tilde m}. Fix an integer $k\ge 1$. Let $F:\R^{k}\to\R$ be a bounded test function with bounded derivatives. Then, there exist a constant $c>0$ such that
	\begin{align*}
		\E\left[ F\big( N^{2/3}(\lambda_{1}-\asedge), \cdots , N^{2/3}(\lambda_{k}-\asedge) \big) \right] 
		= \E_{\text{GOE}}\left[ F\big( N^{2/3}(\mu_{1}-2), \cdots, N^{2/3}(\mu_{k}-2) \big) \right] + \bigO(N^{-c}),
	\end{align*}
	where the second expectation is with respect to a GOE matrix with eigenvalues $\mu_{1},\mu_{2},\cdots,\mu_{N}$.
\end{theorem}

\section{Outline of the proof} 
The following proposition is the essential tool to prove the local law. 
\begin{proposition}[Recursive moment estimate]\label{prop: RME}
	Let $H$ be as in Definition \ref{def: sparse RM} with any small $b>0$. For an integer $\ell\ge 1$ large enough, we can construct the random coefficients $(\mathcal{Z}_{n})_{1\le n\le \ell}$ such that the following statements hold:
	\begin{enumerate}
		\item Assumption \ref{assump: random correction term 1} and Assumption \ref{assump: random correction term 2} are satisfied.
		\item Let $\asedge$ be as in Lemma \ref{lem: property of tilde m}. Consider $\tz=\asedge+E+i\eta$ such that $|E|\le 1$ and $N^{-1}\ll\eta\le1$.
		We define the control parameter $\Phi_{r}$ by
		\begin{multline}\label{eq: control parameter}
		\Phi_{r} \coloneqq \expec{\paren{\frac{\Im m(\tz)}{N\eta}+\frac{1}{N}}|P|^{2r-1}} \\
		+ \max_{1\le s\le 2r-1}\expec{\paren{\frac{\Im m(\tz)}{N\eta}+\sqrt{\frac{\Im m(\tz)}{N\eta}}+\frac{1}{N\eta}}\paren{\frac{|\partial_{2}P|\Im m(\tz)}{N\eta}+\paren{\frac{\Im m(\tz)}{N\eta}}^{2}+\frac{1}{N}}^{s}|P|^{2r-s-1} },
		\end{multline}
		where $\partial_{2}P(x,y)\equiv\partial_{y}P(x,y)/\partial y$. Then, we have
		\begin{align}\label{eq: rme}
		\E\left[ |P(\tz,m(\tz))|^{2r} \right]
		\prec \Phi_{r}.
		\end{align}
	\end{enumerate}
\end{proposition}
\begin{remark}
	In \cite[Proposition 2.6]{HY22+}, Huang and Yau improved Proposition \ref{prop: RME} by removing the term
	\begin{equation*}
	\max_{1\le s\le 2r-1}\expec{\sqrt{\frac{\Im m(\tz)}{N\eta}}\paren{\frac{|\partial_{2}P|\Im m(\tz)}{N\eta}+\frac{1}{N}}^{s}|P|^{2r-s-1} },
	\end{equation*}
	in the control parameter $\Phi_{r}$ so that they can get the rigidity estimates for the bulk eigenvalues in \cite[Theorem 1.6]{HY22+}.
\end{remark}

Let $\boldsymbol{\phi}$ be as in \eqref{eq: bold phi} and we set $\Lambda:=|m(\tz)-\widetilde{m}(\tz)|$. If \eqref{eq: rme} is given, using Young's inequality and Lemma \ref{lem: property of tilde m}, we can show
\begin{multline*}
	\expec{|P(\tz,m(\tz))|^{2r}}
	\prec \E\bigg[ \paren{\frac{\boldsymbol{\phi}}{N\eta}}^{2r} + \frac{\Lambda^{2r}}{(N\eta)^{2r}} + \frac{1}{N^{2r}} + (\sqrt{\kappa+\eta})^{r}\paren{\frac{\boldsymbol{\phi}}{N\eta}}^{3r/2} + (\sqrt{\kappa+\eta})^{r}\frac{\Lambda^{3r/2}}{(N\eta)^{3r/2}}
	\\ 
	+ \frac{1}{N^{r}}\paren{\frac{\boldsymbol{\phi}}{N\eta}}^{r/2} + \frac{\Lambda^{r/2}}{N^{r}(N\eta)^{r/2}}
	+ (\sqrt{\kappa+\eta})^{r}\paren{\frac{\boldsymbol{\phi}}{N^{2}\eta^{2}}}^{r} + (\sqrt{\kappa+\eta})^{r}\frac{\Lambda^{r}}{(N\eta)^{2r}} + \frac{1}{N^{2r}\eta^{r}} \bigg].
\end{multline*}
Taking Taylor expansion of the self-consistent polynomial $P(\tz,m(\tz))$ at $m=\widetilde{m}$, we can observe that
\begin{align*}
	\Lambda \prec \sqrt{|P(\tz,m(\tz))|}.
\end{align*}
Combining the above two estimates, we can show the local law, Theorem \ref{thm: local law}. The rigidity of extremal eigenvalues, Theorem \ref{thm: rigidity}, will follows from the standard argument using the improved local law outside the spectrum and Helffer-Sj\H{o}strand calculus. We remark that all omitted details can be found in Section \ref{sec: proof of local law and rigidity}.

The most technical part is to show the estimate \eqref{eq: rme}. In order to do that, we shall use the cumulant expansion also known as generalized Stein lemma. The statement is as follows.
\begin{lemma}[Cumulant expansion, {\cite[Lemma 2.1]{HK21}} and {\cite[Lemma 3.2]{LS18}}]\label{lem: cumulant expansion}
	Let $h$ be a centered random variable with finite moments of all order. We denote by $\cumul_{p}(h)$ the $p$-th cumulant of $h$. Let $f:\R\to\C$ be a smooth function. Then, for every positive integer $\ell$, we have
	\begin{align*}
		\expec{hf(h)} = \sum_{p=1}^{\ell} \frac{\cumul_{p+1}(h)}{p!}\expec{\frac{d^{p}f(h)}{dh^{p}}} + \mathcal{R}_{\ell+1},
	\end{align*}
	assuming that all expectations in the above equation exist, where $\mathcal{R}_{\ell+1}$ is a remainder term such that for any $t>0$,
	\begin{align*}
		\mathcal{R}_{\ell+1} = \bigO(1)\cdot\paren{\expec{\sup_{|x|\le|h|}\size{\frac{d^{\ell+1}f(h)}{dh^{\ell+1}}}^{2}}\cdot\expec{\size{h^{2\ell+4}\indic(|h|>t)}}} + \bigO(1)\cdot\expec{|h|^{\ell+2}}\cdot\sup_{|x|\le t}\size{\frac{d^{\ell+1}f(h)}{dh^{\ell+1}}(x)}.
	\end{align*}
\end{lemma}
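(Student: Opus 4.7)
The plan is to derive the identity by combining a Taylor expansion of $f$ around zero with the classical moment-cumulant relation, and then to control the truncation error by a dyadic-type splitting of the expectation.

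First I would expand $f$ around the origin using Taylor's theorem with integral remainder:
\begin{align*}
f(h) = \sum_{k=0}^{\ell} \frac{f^{(k)}(0)}{k!}\, h^{k} + \mathcal{T}_{\ell}(h),\qquad \mathcal{T}_{\ell}(h) = \frac{h^{\ell+1}}{\ell!}\int_{0}^{1}(1-s)^{\ell} f^{(\ell+1)}(sh)\, ds.
\end{align*}
Multiplying by $h$ and taking expectations gives
\begin{align*}
\expec{h f(h)} = \sum_{k=0}^{\ell} \frac{f^{(k)}(0)}{k!}\, m_{k+1} + \expec{h\, \mathcal{T}_{\ell}(h)},
\end{align*}
where $m_{j}\coloneqq\expec{h^{j}}$. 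Expanding $f^{(p)}(h)$ around zero likewise yields $\expec{f^{(p)}(h)} = \sum_{j=0}^{\ell-p} \frac{f^{(p+j)}(0)}{j!}\, m_{j} + \text{(remainder)}$.

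Second, to match the polynomial parts of the two expansions I would invoke the moment-cumulant identity obtained from the exponential generating function
\begin{align*}
\sum_{k\ge 0} \frac{m_{k}}{k!}\, t^{k} = \exp\paren{\sum_{p\ge 1}\frac{\cumul_{p}(h)}{p!}\, t^{p}}.
\end{align*}
Differentiating in $t$ and comparing coefficients gives the Leibniz-type recursion $m_{k+1} = \sum_{p=0}^{k}\binom{k}{p}\cumul_{p+1}(h)\, m_{k-p}$. Using $\cumul_{1}(h)=0$ (since $h$ is centered), this rearranges to $\frac{m_{k+1}}{k!} = \sum_{p=1}^{k}\frac{\cumul_{p+1}(h)}{p!}\cdot\frac{m_{k-p}}{(k-p)!}$. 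Substituting back and exchanging the order of summation via $k = p+j$ makes the polynomial part of $\expec{hf(h)}$ equal, term-for-term, to the polynomial part of $\sum_{p=1}^{\ell}\tfrac{\cumul_{p+1}(h)}{p!}\expec{f^{(p)}(h)}$, thereby identifying the difference as a sum of Taylor remainders.

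Third, to obtain the stated bound on $\mathcal{R}_{\ell+1}$ I would split each remainder using $\indic(|h|\le t)+\indic(|h|>t)$. On the event $\{|h|\le t\}$ the integral remainder obeys $|\mathcal{T}_{\ell}(h)|\le \frac{|h|^{\ell+1}}{(\ell+1)!}\sup_{|x|\le t}|f^{(\ell+1)}(x)|$, which, after moment estimates on $h$, yields the second contribution in the stated bound on $\mathcal{R}_{\ell+1}$. On the complementary event I would apply Cauchy–Schwarz to decouple $\expec{\sup_{|x|\le |h|}|f^{(\ell+1)}(x)|^{2}}$ from $\expec{|h|^{2\ell+4}\indic(|h|>t)}$, producing the first contribution. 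A similar splitting handles the remainders from the Taylor expansions of $f^{(p)}$.

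The main obstacle will be the bookkeeping in the second step: one has to rearrange a triangular double sum in $(p,j)$ against the single sum in $k$, and verify that all ``leftover'' monomials $h^{k}f^{(k')}(0)$ with $k'>\ell$ can be consistently reabsorbed into remainder terms of the same schematic form as $\mathcal{T}_{\ell}$, so that the final error inherits the clean bound in the statement. Choosing the truncation scale $t$ appropriately is delicate in general applications of the lemma, but here $t$ is left as a free parameter, so the remainder analysis itself is essentially routine once the identification of polynomial parts is cleanly carried out.
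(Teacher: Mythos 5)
The paper itself does not prove this lemma -- it imports it verbatim from \cite[Lemma 2.1]{HK21} and \cite[Lemma 3.2]{LS18} -- and your Taylor-plus-moment-cumulant argument is essentially the standard proof underlying those references, and it is correct in outline. Two remarks. First, the ``main obstacle'' you anticipate is not actually there: after substituting the recursion $m_{k+1}=\sum_{p=1}^{k}\binom{k}{p}\cumul_{p+1}(h)\,m_{k-p}$ (valid since $\cumul_{1}(h)=0$) and reindexing $k=p+j$, the polynomial parts cancel \emph{exactly} term-for-term, so no leftover monomials $h^{k}f^{(k')}(0)$ with $k'>\ell$ arise; the remainder is precisely $\E[h\,\mathcal{T}_{\ell}(h)]-\sum_{p=1}^{\ell}\frac{\cumul_{p+1}(h)}{p!}\E[\mathcal{T}^{(p)}_{\ell-p}(h)]$, where $\mathcal{T}^{(p)}_{\ell-p}$ is the order-$(\ell-p)$ Taylor remainder of $f^{(p)}$, and every one of these already involves only $f^{(\ell+1)}$, as required. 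Second, the one detail you should spell out is how the cumulant prefactors are absorbed: after the split at $|h|=t$ you get factors like $|\cumul_{p+1}(h)|\,\E[|h|^{\ell-p+1}\indic(|h|\le t)]$ and $|\cumul_{p+1}(h)|\,\E[|h|^{2\ell-2p+2}\indic(|h|>t)]^{1/2}$, and to reach the stated bound you need $|\cumul_{p+1}(h)|\le C_{p}\,\E|h|^{p+1}$ (cumulants are polynomials in moments of total degree $p+1$) together with the positive-association inequality $\E|h|^{a}\,\E\big[|h|^{b}\indic(|h|>t)\big]\le\E\big[|h|^{a+b}\indic(|h|>t)\big]$, which lets you trade the mixed moments for $\E|h|^{\ell+2}$ and $\E[h^{2\ell+4}\indic(|h|>t)]$ uniformly in $t$. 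Finally, note that Cauchy--Schwarz produces the square root of the product of the two expectations in the first error term, which is the form stated in \cite{HK21,LS18}; the displayed remainder here (a plain product) should be read in that sense, so your derivation matches the intended estimate.
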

Since we have the resolvent identity
\begin{align*}
	\sum_{k}h_{ik}G_{kj} - z G_{ij} = \delta_{ij},
\end{align*}
it follows that
\begin{align*}
	1 + z m = \frac{1}{N}\sum_{i,j}h_{ij}G_{ij},
\end{align*}
and
\begin{align}\label{eq: for RME 1}
	\E\left[ |P(\tilde{z},m(\tilde{z}))|^{2r} \right]
	= \frac{1}{N}\sum_{i,j}\E\left[ h_{ij}G_{ij}P^{r-1}\bar{P}^{r} \right] + \E\left[ Q(m)P^{r-1}\bar{P}^{r} \right].
\end{align}
By the cumulant expansion, Lemma \ref{lem: cumulant expansion}, we have
\begin{align}\label{eq: for RME 2}
	\frac{1}{N}\sum_{i,j}\E\left[ h_{ij}G_{ij}P^{r-1}\bar{P}^{r} \right] = \frac{1}{N}\sum_{i,j}\sum_{p=1}^{\ell}\frac{\mathcal{C}_{p+1}}{Nq^{p-1}}\expec{\deri^{p}(G_{ij}P^{r-1}\bar{P}^{r})} + \bigO_{\prec}(q^{-\ell}),
\end{align}
where we use $\deri$ to denote the partial derivative with respect to $h_{ij}$, i.e.~$\deri\coloneqq\partial/\partial h_{ij}$.
Thus the problem boils down to showing
\begin{align*}
	\sum_{p=1}^{\ell}\frac{\mathcal{C}_{p+1}}{N^{2}q^{p-1}}\sum_{i,j}\expec{\deri^{p}(G_{ij}P^{r-1}\bar{P}^{r})} + \E\left[ Q(m)P^{r-1}\bar{P}^{r} \right] \prec \Phi_{r}.
\end{align*}
Therefore we should find the way to construct $Q(m)$ so that there is a nice cancellation with leading order terms of
\begin{align}\label{eq: main terms}
	\sum_{p=1}^{\ell}\frac{\mathcal{C}_{p+1}}{N^{2}q^{p-1}}\sum_{i,j}\expec{\deri^{p}(G_{ij}P^{r-1}\bar{P}^{r})}.
\end{align}
In Section \ref{sec: RME}, we shall describe how to construct the random correction terms $\{\mathcal{Z}_{n}\}_{n=1}^{\ell}$ in order to obtain the desired estimates of \eqref{eq: main terms}.

\section{Recursive moment estimate}\label{sec: RME}
This section is devoted to the proof of Proposition \ref{prop: RME}. For brevity, instead of \eqref{eq: main terms}, we consider
\begin{align}\label{eq: main terms alt}
\sum_{p=1}^{\ell}\frac{\mathcal{C}_{p+1}}{N^{2}q^{p-1}}\sum_{i,j}\expec{\deri^{p}(G_{ij}P^{2r-1})}.
\end{align}
The idea is roughly as follows:
\begin{enumerate}
	\item[Step 1.] Assume there exists a set of random coefficients $\{\mathcal{Z}_{n}\}_{n=1}^{\ell}$ satisfying Assumption \ref{assump: random correction term 1} and Assumption \ref{assump: random correction term 2}.
	\item[Step 2.] Observe that the main contribution of \eqref{eq: main terms alt} comes from the terms like
	\begin{equation}\label{eq: step 2}
	\expec{G_{ii}^{\frac{s+1}{2}}G_{jj}^{\frac{s+1}{2}}(\deri^{p-s}P^{2r-1})},
	\end{equation}
	where $s$ is a positive odd integer. (Proposition \ref{prop: moment estimates})
	\item[Step 3.] Replace all diagonal entries, $G_{ii}$ and $G_{jj}$, in \eqref{eq: step 2} with the normalized trace $m$ to get
	\begin{equation}\label{eq: step 3}
	\expec{m^{s+1}(\deri^{p-s}P^{2r-1})},
	\end{equation}
	and keep track of the remaining terms after the replacement. (Proposition \ref{prop: replacing all diagonals})
	\item[Step 4.] Repeat Step 2 and Step 3 for the remaining terms until the next remaining terms are negligible. Then, ignoring some negligible errors, we can write \eqref{eq: main terms alt} as a linear combination of the sums of the terms like \eqref{eq: step 3}. (Proposition \ref{prop: moment final})
	\item[Step 5.] Show that a sum of the terms like \eqref{eq: step 3} can be written as a sum of terms in the following form
	\begin{equation*}
	\expec{F(h_{ij})m^{s+1}P^{2r-1}},
	\end{equation*}
	where $F(h_{ij})$ is a polynomial in the variable $h_{ij}$. (Proposition \ref{prop: correction term mathing})\\
	Using this, we can construct $\{\mathcal{Z}_{n}\}_{n=1}^{\ell}$ so that the recursive moment estimate holds. Check Assumption \ref{assump: random correction term 1} and Assumption \ref{assump: random correction term 2} for the constructed set $\{\mathcal{Z}_{n}\}_{n=1}^{\ell}$.
\end{enumerate}

\subsection{Step 1.~Assuming the existence of the desired random coefficients}\label{subsec: step1}
We suppose there exists a set of random coefficients $\{\mathcal{Z}_{n}\}_{n=1}^{\ell}$ satisfying Assumption \ref{assump: random correction term 1} and Assumption \ref{assump: random correction term 2}. Then, we can define $\asedge=\asedge(\mathcal{Z}_{1},\cdots,\mathcal{Z}_{\ell})$ as in Lemma \ref{lem: property of tilde m}. In this subsection, we shall derive some useful estimates by supposing Assumption \ref{assump: random correction term 1} and Assumption \ref{assump: random correction term 2}. Recall that $\tz=\asedge+E+i\eta$ satisfying $|E|\le 1$ and $N^{-1}\ll\eta\le1$. Let $\deri$ be the partial derivative with respect to $h_{ij}$. Since $\asedge$ is a polynomial in the variables $\{\mathcal{Z}_{n}\}_{n=1}^{\ell}$, it follows from Assumption \ref{assump: random correction term 2} and the product rule that
\begin{align*}
\deri\tz \prec \frac{1}{N}. 
\end{align*}

\begin{remark}
	From Section \ref{subsec: step1} to Section \ref{subsec: step5}, we always assume that there exist the random coefficients $(\mathcal{Z}_{n})_{1\le n\le \ell}$ satisfies Assumption \ref{assump: random correction term 1} and Assumption \ref{assump: random correction term 2}.
\end{remark}

\begin{proposition}\label{prop: bounds for derivatives}
	Suppose Assumption \ref{assump: random correction term 1} and Assumption \ref{assump: random correction term 2} hold. For every $p\ge1$, we define $D_{ij}^{p}G \coloneqq \paren{\deri^{p}G}(\tz)$. The following estimates holds:
	\begin{align}\label{eq: derivative G_ij}
	\deri^{p}G_{ij}(\tz) = D_{ij}^{p}G_{ij} + \bigO_{\prec}\paren{\frac{\Im m(\tz)}{N\eta}},
	\end{align}
	\begin{align}\label{eq: derivative m}
	\deri^{p} m(\tz) = \bigO_{\prec}\paren{\frac{\Im m(\tz)}{N\eta}},
	\end{align}
	\begin{align}\label{eq: derivative P}
	\deri^{p} P(\tz, m(\tz)) = \begin{cases}
	\bigO_{\prec}\paren{\frac{|\partial_{2}P|\Im m(\tz)}{N\eta}+\frac{1}{N}} & p=1, \\
	\bigO_{\prec}\paren{\frac{|\partial_{2}P|\Im m(\tz)}{N\eta}+\paren{\frac{\Im m(\tz)}{N\eta}}^{2}+\frac{1}{N}} & p\ge2.
	\end{cases}
	\end{align}
	\begin{proof}
		We shall prove \eqref{eq: derivative G_ij} first. Applying the Ward identity
		\begin{align}\label{eq: ward id}
		\sum_{l}|G_{il}(\tz)|^{2} = \frac{\Im G_{ii}(\tz)}{\eta},
		\end{align}
		and \cite[Proposition A.1]{HLY20}, we get
		\begin{multline*}
		\deri G_{ij}(\tz) = D_{ij}G_{ij}(\tz) + (\partial_{z}G_{ij})(\tz) \deri(\tz)
		= D_{ij}G_{ij}(\tz) + \deri(\tz) \sum_{l}G_{il}(\tz)G_{lj}(\tz) \\
		= D_{ij}G_{ij}(\tz) + \bigO_{\prec}\paren{\frac{\Im m(\tz)}{N\eta}}.
		\end{multline*}
		Next, we will compute $\deri^{p} G_{ij}(\tz)$. Since $\deri$ (or $D_{ij}$) and $\partial_{z}$ commute, the Ward identity \eqref{eq: ward id} implies
		\begin{multline*}
		\deri D_{ij}G_{ij}(\tz) = D_{ij}^{2}G_{ij}(\tz) + \paren{\partial_{z}D_{ij}G_{ij}}(\tz) \deri(\tz) = D_{ij}^{2}G_{ij}(\tz) + \paren{D_{ij}\partial_{z}G_{ij}}(\tz) \deri(\tz) \\
		= D_{ij}^{2}G_{ij}(\tz) + \bigO_{\prec}\paren{\frac{\Im m(\tz)}{N\eta}}.
		\end{multline*}
		We also observe
		\begin{align*}
		\deri\paren{(\partial_{z}G_{ij})(\tz) \deri(\tz)} = \deri(\partial_{z}G_{ij})(\tz) \deri(\tz) + (\partial_{z}G_{ij})(\tz) \deri^{2}(\tz) = \bigO_{\prec}\paren{\frac{\Im m(\tz)}{N\eta}}.
		\end{align*}
		Repeating the above argument, we conclude \eqref{eq: derivative G_ij}.
		
		Now we take derivative for $m(\tz)$ to show \eqref{eq: derivative m}. Since $m = \frac{1}{N}\sum_{l}G_{ll}$ and $\deri G_{ll}(\tz) = D_{ij}G_{ll}(\tz) + \bigO_{\prec}\paren{\frac{\Im m(\tz)}{N\eta}}$ by the above argument, it follows from Ward identity \eqref{eq: ward id} that
		\begin{align*}
		\deri m(\tz) = 
		\bigO_{\prec}\paren{\frac{\Im m(\tz)}{N\eta}}.
		\end{align*}
		We can prove the estimate \eqref{eq: derivative m} for general $p\ge1$ by the same reasoning.
		
		In order to get \eqref{eq: derivative P}, consider the polynomial
		\begin{align*}
		P\big(\tz,m(\tz)\big) = 1 + \tz m(\tz) + \sum_{n=1}^{\ell} \mathcal{Z}_{n}m^{2n}(\tz),
		\end{align*}
		and take derivative; then we have
		\begin{align*}
		\deri P\big(\tz,m(\tz)\big) = \paren{\partial_{2}P} \deri m(\tz) + \paren{ (\deri \tz)m(\tz) + \sum_{n=1}^{\ell} (\deri \mathcal{Z}_{n})m^{2n}(\tz) } = \bigO_{\prec}\paren{\frac{|\partial_{2}P|\Im m(\tz)}{N\eta}+\frac{1}{N}},
		\end{align*}
		where we denote by $\partial_{2}P(x,y)$ the partial derivative $\partial_{y}P(x,y)/\partial y$. Using $\deri\mathcal{Z}_{n} \prec \frac{1}{N}$ and the estimate \eqref{eq: derivative m}, the remaining part of \eqref{eq: derivative P} immediately follows. 
	\end{proof}
\end{proposition}

\subsection{Step 2.~Extracting the main contribution}
In this subsection, we shall calculate the main contribution of \eqref{eq: main terms alt}.

\begin{proposition}\label{prop: moment estimates}
	Suppose Assumption \ref{assump: random correction term 1} and Assumption \ref{assump: random correction term 2} hold. Let $\Phi_{r}$ be as in \eqref{eq: control parameter}. If $\ell$ is large enough, we have
	\begin{multline*}
	\sum_{p=1}^{\ell}\frac{\mathcal{C}_{p+1}}{N^{2}q^{p-1}}\sum_{i,j}\expec{\deri^{p}(G_{ij}P^{2r-1}) } \\
	= - \sum_{p=1}^{\ell}\sum_{\substack{0\le s\le p \\ s\equiv1(\Mod2) }} \combi{p}{s}\frac{s!\mathcal{C}_{p+1}}{N^{2}q^{p-1}}
	\sum_{i\neq j}\expec{G_{ii}^{\frac{s+1}{2}}G_{jj}^{\frac{s+1}{2}}(\deri^{p-s}P^{2r-1})} + \bigO_{\prec}\paren{\Phi_{r}}.
	\end{multline*}
\end{proposition}

\begin{proof}
	Due to Proposition \ref{prop: bounds for derivatives} and \cite[Proposition A.1]{HLY20}, we have
	\begin{equation*}
	\frac{1}{N^{2}}\sum_{i}\expec{\partial_{ii}^{p}(G_{ii}P^{2r-1}) } = \bigO_{\prec}\paren{\Phi_{r}},
	\end{equation*}
	which implies
	\begin{equation*}
	\frac{1}{N^{2}}\sum_{i,j}\expec{\deri^{p}(G_{ij}P^{2r-1}) }
	= \frac{1}{N^{2}}\sum_{i\neq j}\expec{\deri^{p}(G_{ij}P^{2r-1}) } + \bigO_{\prec}\paren{\Phi_{r}}.
	\end{equation*}
	Then, the proof is split into two parts. One is to show that we have for odd integer $s$ with $0\le s\le p$,
	\begin{align*}
		\frac{1}{N^{2}}\sum_{i\neq j}\expec{(\deri^{s}G_{ij})(\deri^{p-s}P^{2r-1})} = -\frac{(s!)}{N^{2}}\sum_{i\neq j}\expec{G_{ii}^{\frac{s+1}{2}}G_{jj}^{\frac{s+1}{2}}(\deri^{p-s}P^{2r-1})} + \bigO_{\prec}\paren{\Phi_{r}}.
	\end{align*}
	The other is to prove for even integer $s$ satisfying $0\le s\le p$,
	\begin{align*}
		\frac{1}{N^{2}}\sum_{i\neq j}\expec{(\deri^{s}G_{ij})(\deri^{p-s}P^{2r-1})} = \bigO_{\prec}\paren{\Phi_{r}}.
	\end{align*}
	
	We first consider the case $p=1$:
	\begin{align*}
		\frac{1}{N^{2}}\sum_{i\neq j}\expec{\deri(G_{ij}P^{2r-1})} = \frac{1}{N^{2}}\sum_{i\neq j}\E\left[ (\deri G_{ij})P^{2r-1} \right] + \frac{(2r-1)}{N^{2}}\sum_{i\neq j}\E\left[ G_{ij}(\deri P) P^{2r-2} \right].
	\end{align*}
	Applying \eqref{eq: derivative G_ij}, we get
	\begin{align*}
		\frac{1}{N^{2}}\sum_{i\neq j}\E\left[ (\deri G_{ij})P^{2r-1} \right]
		&= \frac{1}{N^{2}}\sum_{i\neq j}\E\left[D_{ij}G_{ij}P^{2r-1}\right] + \bigO\left(\E\left[\frac{\Im m(\tz)}{N\eta}|P|^{2r-1}\right]\right) \\
		&= -\frac{1}{N^{2}}\sum_{i\neq j}\E\left[G_{ii}G_{jj}P^{2r-1}\right] + \bigO\left(\E\left[\frac{\Im m(\tz)}{N\eta}|P|^{2r-1}\right]\right)
	\end{align*}
	We observe that
	\begin{align*}
		\size{\sum_{i\neq j} G_{ij}(\deri P)} \le \paren{\sum_{i,j}|G_{ij}|^{2}}^{1/2}\paren{\sum_{i,j}|\deri P|^{2}}^{1/2} \prec \sqrt{N}\paren{\frac{\Im m(\tz)}{\eta}}^{1/2}\paren{\frac{\Im m(\tz)|\partial_{2}P|}{\eta}+1}.
	\end{align*}
	Then, we obtain
	\begin{align*}
		\frac{1}{N^{2}}\sum_{i\neq j}\E\left[ G_{ij}(\deri P)P^{2r-2} \right] = \bigO_{\prec}\paren{\expec{\paren{\frac{\Im m(\tz)}{N\eta}}^{1/2}\paren{\frac{\Im m(\tz)|\partial_{2}P|}{N\eta}+\frac{1}{N}}}|P|^{2r-2}}.
	\end{align*}
	We conclude
	\begin{multline*}
		\frac{1}{N^{2}}\sum_{i\neq j}\E\left[ \deri(G_{ij}P^{2r-1})) \right] = -\frac{1}{N^{2}}\sum_{i\neq j}\E\left[G_{ii}G_{jj}P^{2r-1}\right] \\
		+ \bigO_{\prec}\left(\expec{\frac{\Im m(\tz)}{N\eta}|P|^{2r-1}}+ \expec{\paren{\frac{\Im m(\tz)}{N\eta}}^{1/2}\paren{\frac{\Im m(\tz)|\partial_{2}P|}{N\eta}+\frac{1}{N}}|P|^{2r-2}} \right).
	\end{multline*}
	
	Next, we consider the case $p=2$:
	\begin{align*}
		\frac{1}{N^{2}}\sum_{i\neq j}\expec{\deri^{2}(G_{ij}P^{2r-1})}.
	\end{align*}
	The term $\deri^{2}(G_{ij}P^{2r-1})$ is split into $(\deri^{2}G_{ij})P^{2r-1}$, $\deri G_{ij}\deri P^{2r-1}$, and $G_{ij}\deri^{2}P^{2r-1}$. Using \eqref{eq: derivative G_ij}, it follows that
	\begin{align*}
		\frac{1}{N^{2}}\sum_{i\neq j}\expec{(\deri^{2}G_{ij})P^{2r-1}} = \frac{1}{N^{2}}\sum_{i\neq j}\expec{(D_{ij}^{2}G_{ij})P^{2r-1}} + \bigO_{\prec}\paren{\expec{\frac{\Im m(\tz)}{N\eta}|P|^{2r-1}}}.
	\end{align*}
	When we estimate
	\begin{align*}
		\frac{1}{N^{2}}\sum_{i\neq j}\expec{(D_{ij}^{2}G_{ij})P^{2r-1}},
	\end{align*}
	any term containing at least two off-diagonal Green function entries can be bounded by
	\begin{align*}
		\bigO_{\prec}\paren{\expec{\frac{\Im m(\tz)}{N\eta}|P|^{2r-1}}},
	\end{align*}
	using the Ward identity \eqref{eq: ward id}. Thus, the tricky term is
	\begin{align*}
		\frac{1}{N^{2}}\sum_{i\neq j}\expec{G_{ij}G_{ii}G_{jj}P^{2r-1}}.
	\end{align*}
    Let $H^{(i)}$ be the $N\times N$ matrix defined by
    \begin{align*}
    	\paren{H^{(i)}}_{kj} \coloneqq \indic(k\neq i)\indic(j\neq i) h_{kj}.
    \end{align*}
    We denote by $G^{(i)}$ the Green function of $H^{(i)}$. When $i\neq j$, we have the following resolvent identity: 
	\begin{align}\label{eq: resolvent entry identity 1} 
		G_{ij} = -G_{ii}\sum_{k}^{(i)}h_{ik}G_{kj}^{(i)},
	\end{align}
	where we use the notation 
	\begin{align}\label{eq: sum without an index}
		\sum_{k}^{(j)}\coloneqq\sum_{\substack{1\le k \le N \\ k\neq j}}.
	\end{align}
	By the above resolvent identity \eqref{eq: resolvent entry identity 1}, we get
	\begin{align*}
		\frac{1}{N^{2}}\sum_{i\neq j}\expec{ G_{ij}G_{ii}G_{jj}P^{2r-1}}
		= - \frac{1}{N^{2}}\sum_{i\neq j}\sum_{k}^{(i)}\expec{ h_{ik}G_{kj}^{(i)}G_{ii}^{2}G_{jj}P^{2r-1} } .
	\end{align*}
	We apply the cumulant expansion again and obtain
	\begin{align*}
		- \sum_{p'=1}^{\ell}\sum_{i\neq j}\sum_{k}^{(i)} \frac{\mathcal{C}_{p'+1}}{N^{3}q^{p'-1}}
		\expec{ G_{kj}^{(i)}\partial_{ik}^{p'}(G_{ii}^{2}G_{jj}P^{2r-1}) }.
	\end{align*}
	For $i\notin \{k,j\}$, the following identity holds:
	\begin{align}\label{eq: resolvent entry identity 2}
		G_{kj}^{(i)}=G_{kj}-\frac{G_{ki}G_{ij}}{G_{ii}}.
	\end{align}
	Using this identity \eqref{eq: resolvent entry identity 2}, we have for each $p'\ge 1$,
	\begin{multline}\label{eq: moment estimate p=2 one off-diagonal}
		\frac{1}{N^{3}q^{p'-1}}\sum_{i\neq j}\sum_{k}^{(i)}\expec{ G_{kj}^{(i)}\partial_{ik}^{p'}(G_{ii}^{2}G_{jj}P^{2r-1}) }
		\\
		= \frac{1}{N^{3}q^{p'-1}}\sum_{i\neq j}\sum_{k}^{(i)}\expec{ G_{kj}\partial_{ik}^{p'}(G_{ii}^{2}G_{jj}P^{2r-1}) } -\frac{1}{N^{3}q^{p'-1}}\sum_{i\neq j}\sum_{k}^{(i)}\expec{\frac{G_{ki}G_{ij}}{G_{ii}}\partial_{ik}^{p'}(G_{ii}^{2}G_{jj}P^{2r-1}) }.
	\end{multline}
	The second term on the right side of \eqref{eq: moment estimate p=2 one off-diagonal} contains at least two off-diagonal Green function entries. We deduce from the Ward identity that
	\begin{multline*}
		\frac{1}{N^{3}q^{p'-1}}\sum_{i\neq j}\sum_{k}^{(i)}\expec{\frac{G_{ki}G_{ij}}{G_{ii}}\partial_{ik}^{p'}(G_{ii}^{2}G_{jj}P^{2r-1}) } \\
		= \bigO_{\prec}\paren{ \max_{0\le s\le 2r-1}\expec{\frac{\Im m(\tz)}{N\eta}\paren{\frac{\Im m(\tz)|\partial_{2}P|}{N\eta}+\paren{\frac{\Im m(\tz)}{N\eta}}^{2}+\frac{1}{N}}^{s}|P|^{2r-s-1}} }.
	\end{multline*}
	For the first term on the right side of  \eqref{eq: moment estimate p=2 one off-diagonal}, if the derivative $\partial_{ik}$ hits $G_{jj}$ at least once, there are at least three off-diagonal entries in the summand so we can gain a factor of $\Im m(\tz)/N\eta$ due to the Ward identity. Similarly, when the derivative $\partial_{ik}$ hits $G_{ii}$ odd times, we can find at least two off-diagonal entries and hence an additional factor of $\Im m(\tz)/N\eta$ follows from the Ward identity. Thus, if the derivative $\partial_{ik}$ hits $G_{ii}$, it should hit $G_{ii}$ even times. If the derivative $\partial_{ik}$ hits $P$ at least once, the resulting term is bounded by
	\begin{align*}
		\bigO_{\prec}\paren{\max_{1\le s\le 2r-1}\expec{ \frac{1}{N\eta}\left(\frac{|\partial_{2}P|\Im m(\tz)}{N\eta}+\paren{\frac{\Im m(\tz)}{N\eta}}^{2}+\frac{1}{N}\right)^{s}|P|^{2r-s-1} } },
	\end{align*}
	since we have \eqref{eq: derivative P} and
	\begin{align*}
		\size{ \sum_{k,j}G_{kj}G_{kk}^{l}G_{jj}\partial_{ik}^{l'}P } &= \size{ \sum_{k}G_{kk}^{l}\partial_{ik}^{l'}P\sum_{j}G_{kj}G_{jj} } \le \paren{\sum_{k}|G_{kk}^{l}\partial_{ik}^{l'}P|^{2}}^{1/2}\paren{\sum_{k}\size{\sum_{j}G_{kj}G_{jj}}^{2}}^{1/2} \\
		&\prec \sqrt{N}\paren{\frac{|\partial_{2}P|\Im m(\tz)}{N\eta}+\paren{\frac{\Im m(\tz)}{N\eta}}^{2}+\frac{1}{N}}\lVert G\mathbf{v} \rVert \\
		&\lesssim N\paren{\frac{|\partial_{2}P|\Im m(\tz)}{N\eta}+\paren{\frac{\Im m(\tz)}{N\eta}}^{2}+\frac{1}{N}}\matnorm{G}\\
		&\le \frac{N}{\eta}\paren{\frac{|\partial_{2}P|\Im m(\tz)}{N\eta}+\paren{\frac{\Im m(\tz)}{N\eta}}^{2}+\frac{1}{N}},
	\end{align*}
	where $\mathbf{v}=(G_{11},\cdots,G_{NN})$. 
	If the derivative $\partial_{ik}$ hits $G_{ii}$ even times, we apply the identities \eqref{eq: resolvent entry identity 1}-\eqref{eq: resolvent entry identity 2} and the cumulant expansion again. For example, in the case that $p'=2$, we have the term
	\begin{align*}
		\frac{1}{N^{3}q}\sum_{i\neq j}\sum_{k}^{(i)}\expec{ G_{kj}G_{ii}^{3}G_{kk}G_{jj}P^{2r-1} }.
	\end{align*}
	Note that we gain at least a factor of $q^{-1}$ if the derivative $\partial_{ik}$ hits $G_{ii}$ even times. Using the identities and the cumulant expansion, we have
	\begin{align*}
		- \sum_{p''=1}^{\ell}\sum_{i\neq j}\sum_{k}^{(i)}\sum_{l}^{(k)}\frac{\mathcal{C}_{p''+1}}{N^{4}q^{p''}}\expec{G^{(k)}_{lj}\partial_{kl}^{p''}(G_{ii}^{3}G_{kk}^{2}G_{jj}P^{2r-1})}.
	\end{align*}
	Following the above argument similarly, a tricky term comes from the case that the derivative $\partial_{kl}$ hits $G_{kk}$ even times so at least a factor of $q^{-1}$ is obtained. Repeating such process until we get these factors of $q^{-1}$ enough, we conclude
	\begin{multline}\label{eq: estimate tricky term}
		\frac{1}{N^{2}}\sum_{i\neq j}\expec{ G_{ij}G_{ii}G_{jj}P^{2r-1}}
		\\
		= \bigO_{\prec} \Bigg( \max_{0\le s\le 2r-1}\expec{\frac{\Im m(\tz)}{N\eta}\paren{\frac{\Im m(\tz)|\partial_{2}P|}{N\eta}+\paren{\frac{\Im m(\tz)}{N\eta}}^{2}+\frac{1}{N}}^{s}|P|^{2r-s-1}}  \\
		+  \max_{1\le s\le 2r-1}\expec{ \frac{1}{N\eta}\left(\frac{|\partial_{2}P|\Im m(\tz)}{N\eta}+\paren{\frac{\Im m(\tz)}{N\eta}}^{2}+\frac{1}{N}\right)^{s}|P|^{2r-s-1} } \Bigg).
	\end{multline}
	In addition, we can see that
	\begin{multline*}
		\combi{2}{1}\frac{\mathcal{C}_{3}}{N^{2}q}\sum_{i\neq j}\expec{  (\deri G_{ij})\deri P^{2r-1}  } \\
		=\combi{2}{1}\frac{\mathcal{C}_{3}}{N^{2}q}\sum_{i\neq j}\expec{  (D_{ij}G_{ij})\deri P^{2r-1}  } + \bigO_{\prec}\paren{\expec{\frac{\Im m(\tz)}{N\eta}\paren{\frac{\Im m(\tz)|\partial_{2}P|}{N\eta}+\frac{1}{N}}|P|^{r-2}}} \\
		=-\combi{2}{1}\frac{\mathcal{C}_{3}}{N^{2}q}\sum_{i\neq j}\expec{  G_{ii}G_{jj}\deri(P^{2r-1})  } + \bigO_{\prec}\paren{\expec{\frac{\Im m(\tz)}{N\eta}\paren{\frac{\Im m(\tz)|\partial_{2}P|}{N\eta}+\frac{1}{N}}|P|^{r-2}}}.
	\end{multline*}
	Since we have
	\begin{align*}
		\size{\sum_{i\neq j} G_{ij}(\deri^{2} P^{2r-1})} 
		\prec \sqrt{N}\paren{\frac{\Im m(\tz)}{\eta}}^{1/2}N\max_{s=1,2}\left[\paren{\frac{|\partial_{2}P|\Im m(\tz)}{N\eta}+\paren{\frac{\Im m(\tz)}{N\eta}}^{2}+\frac{1}{N}}^{s}|P|^{2r-s-1}\right],
	\end{align*}
	it follows that
	\begin{align*}
		\frac{1}{N^{2}}\sum_{i\neq j}\E\left[ G_{ij}(\deri^{2} P^{2r-1}) \right] = \bigO_{\prec}\paren{\max_{s=1,2}\expec{\paren{\frac{\Im m(\tz)}{N\eta}}^{1/2}\paren{\frac{|\partial_{2}P|\Im m(\tz)}{N\eta}+\paren{\frac{\Im m(\tz)}{N\eta}}^{2}+\frac{1}{N}}^{s}|P|^{2r-s-1}}}.
	\end{align*}
	Thus, we deduce that
	\begin{align*}
		\frac{\mathcal{C}_{3}}{N^{2}q}\sum_{i\neq j}\E\left[ \partial_{ij}^{2}(G_{ij}P^{r-1}\bar{P}^{r}) \right] 
		= -\combi{2}{1}\frac{\mathcal{C}_{3}}{N^{2}q}\sum_{i\neq j}\expec{  G_{ii}G_{jj}\deri(P^{r-1}\bar{P}^{r})  } + \bigO_{\prec}\paren{\Phi_{r}}.
	\end{align*}
	
	Now we consider the case $p\ge3$:
	\begin{align*}
		\frac{\mathcal{C}_{p+1}}{N^{2}q^{p-1}}\sum_{i\neq j}\expec{\deri^{p}(G_{ij}P^{2r-1}) } & 
		= \sum_{s=0}^{p}\combi{p}{s}\frac{\mathcal{C}_{p+1}}{N^{2}q^{p-1}}\sum_{i\neq j}\expec{(\deri^{s}G_{ij})(\deri^{p-s}P^{2r-1})} \\
		&= \sum_{s=0}^{p}\combi{p}{s}\frac{\mathcal{C}_{p+1}}{N^{2}q^{p-1}}\sum_{i\neq j}\expec{(D_{ij}^{s}G_{ij})(\deri^{p-s}P^{2r-1})} + \bigO_{\prec}\paren{\Phi_{r}}.
	\end{align*}
	If $s$ is odd, we can observe that
	\begin{equation*}
	D_{ij}^{s}G_{ij} = - (s!)G_{ii}^{\frac{s+1}{2}}G_{jj}^{\frac{s+1}{2}} + (\text{the terms having at least two off-diagonal Green function entries}),
	\end{equation*}
	which implies due to the Ward identity \eqref{eq: ward id}
	\begin{equation*}
	\frac{\mathcal{C}_{p+1}}{N^{2}q^{p-1}}\sum_{i\neq j}\expec{(D_{ij}^{s}G_{ij})(\deri^{p-s}P^{2r-1})}
	= - \frac{(s!)\mathcal{C}_{p+1}}{N^{2}q^{p-1}}\sum_{i\neq j}\expec{G_{ii}^{\frac{s+1}{2}}G_{jj}^{\frac{s+1}{2}}(\deri^{p-s}P^{2r-1})} + \bigO_{\prec}\paren{\Phi_{r}}, \quad s\equiv1(\Mod2).
	\end{equation*}
	
	Next we consider the case that $s$ is even. We can see that $D_{ij}^{s}G_{ij}$ has the following form:
	\begin{equation*}
	D_{ij}^{s}G_{ij} = t_{s}G_{ij}G_{ii}^{s/2}G_{jj}^{s/2} + (\text{the terms having at least three off-diagonal Green function entries}),
	\end{equation*}
	where $t_{s}$ is a constant depending on $s$. Since any term having at least two off-diagonal Green function entries is eventually absorbed into the desired bound $\bigO_{\prec}\paren{\Phi_{r}}$ because of the Ward identity \eqref{eq: ward id}, we focus on the terms
	\begin{align*}
	    \frac{1}{N^{2}}\sum_{i\neq j}\expec{G_{ij}G_{ii}^{s/2}G_{jj}^{s/2}(\deri^{p-s}P^{2r-1})}, \quad s\equiv 0(\Mod2),
	\end{align*}
	where each term has the only one off-diagonal Green function entry $G_{ij}$. If $s=p$ (in this case, $p$ is even), we should estimate
	\begin{align*}
		\frac{1}{N^{2}}\sum_{i\neq j}\expec{G_{ij}G_{ii}^{p/2}G_{jj}^{p/2}P^{2r-1}}.
	\end{align*}
	We shall apply the argument from \eqref{eq: resolvent entry identity 1} to \eqref{eq: estimate tricky term} similarly to get
	\begin{align}\label{eq: moment estimate s even s=p}
		\frac{1}{N^{2}}\sum_{i\neq j}\expec{G_{ij}G_{ii}^{p/2}G_{jj}^{p/2}P^{2r-1}} = \bigO_{\prec}\paren{\Phi_{r}}.
	\end{align}
	Using the identity \eqref{eq: resolvent entry identity 1}, we have
	\begin{equation}\label{eq: ss1}
	\frac{1}{N^{2}}\sum_{i\neq j}\expec{G_{ij}G_{ii}^{p/2}G_{jj}^{p/2}P^{2r-1}}
	= - \frac{1}{N^{2}}\sum_{i\neq j}\sum_{k}^{(i)}\expec{h_{ik}G_{kj}^{(i)}G_{ii}^{1+p/2}G_{jj}^{p/2}P^{2r-1}}.
	\end{equation}
	Applying the cumulant expansion (Lemma \ref{lem: cumulant expansion}), we get
	\begin{equation*}
	- \sum_{p'=1}^{\ell}\sum_{i\neq j}\sum_{k}^{(i)}\frac{\mathcal{C}_{p'+1}}{N^{3}q^{p'-1}}\expec{ G_{kj}^{(i)} \partial_{ik}^{p'}(G_{ii}^{1+p/2}G_{jj}^{p/2}P^{2r-1}) }.
	\end{equation*}
	Due to the identity \eqref{eq: resolvent entry identity 2}, we have for each $p'\ge 1$,
	\begin{multline*}
	\frac{1}{N^{3}q^{p'-1}}\sum_{i\neq j}\sum_{k}^{(i)}\expec{ G_{kj}^{(i)} \partial_{ik}^{p'}(G_{ii}^{1+p/2}G_{jj}^{p/2}P^{2r-1}) } \\
	= \frac{1}{N^{3}q^{p'-1}}\sum_{i\neq j}\sum_{k}^{(i)}\expec{ G_{kj} \partial_{ik}^{p'}(G_{ii}^{1+p/2}G_{jj}^{p/2}P^{2r-1}) }
	- \frac{1}{N^{3}q^{p'-1}}\sum_{i\neq j}\sum_{k}^{(i)}\expec{ \frac{G_{ki}G_{ij}}{G_{ii}} \partial_{ik}^{p'}(G_{ii}^{1+p/2}G_{jj}^{p/2}P^{2r-1}) }.
	\end{multline*}
	The second term on the right-hand side has at least two off-diagonal entries so it can be absorbed into $\bigO_{\prec}\paren{\Phi_{r}}$ because of the Ward identity. Next we consider the first term on the right-hand side. If $\partial_{ik}$ hits $G_{jj}$ at least once, then we can find at least three off-diagonal entries so that every term of this case is absorbed into $\bigO_{\prec}\paren{\Phi_{r}}$ by the Ward identity. If $\partial_{ik}$ hits $P$ at least once, the resulting term is also absorbed into $\bigO_{\prec}\paren{\Phi_{r}}$ because we have for $l\ge0$ and $l'\ge 1$
	\begin{align*}
	\size{ \sum_{k,j}G_{kj}G_{kk}^{l}G_{jj}^{p/2}\partial_{ik}^{l'}P } &= \size{ \sum_{k}G_{kk}^{l}\partial_{ik}^{l'}P\sum_{j}G_{kj}G_{jj}^{p/2} } \le \paren{\sum_{k}|G_{kk}^{l}\partial_{ik}^{l'}P|^{2}}^{1/2}\paren{\sum_{k}\size{\sum_{j}G_{kj}G_{jj}^{p/2}}^{2}}^{1/2} \\
	&\lesssim N\paren{\frac{|\partial_{2}P|\Im m(\tz)}{N\eta}+\paren{\frac{\Im m(\tz)}{N\eta}}^{2}+\frac{1}{N}}\matnorm{G}\\
	&\le \frac{N}{\eta}\paren{\frac{|\partial_{2}P|\Im m(\tz)}{N\eta}+\paren{\frac{\Im m(\tz)}{N\eta}}^{2}+\frac{1}{N}},
	\end{align*}
	where the estimate \eqref{eq: derivative P} is used. The only remaining case is that $\partial_{ik}$ hits $G_{ii}$ only:
	\begin{equation*}
	\frac{1}{N^{3}q^{p'-1}}\sum_{i\neq j}\sum_{k}^{(i)}\expec{ G_{kj} \partial_{ik}^{p'}(G_{ii}^{1+p/2})G_{jj}^{p/2}P^{2r-1} }.
	\end{equation*}
	If $p'$ is odd, then there are at least two off-diagonal entries so we are done. A tricky term comes from the case that $p'$ is even. Since any term containing at least two off-diagonal entries can be absorbed into $\bigO_{\prec}\paren{\Phi_{r}}$, it is enough to only consider
	\begin{equation*}
	\frac{1}{N^{3}q^{p'-1}}\sum_{i\neq j}\sum_{k}^{(i)}\expec{ G_{kj} G_{ii}^{1+p/2+p'/2}G_{kk}^{p'/2} G_{jj}^{p/2}P^{2r-1} }.
	\end{equation*}
	Since $p'$ is even, we note that $p'\ge2$ and, as a result, we gain at least a factor of $q^{-1}$ in this case. Let us repeat the similar argument as the above. Using the identities \eqref{eq: resolvent entry identity 1} and \eqref{eq: resolvent entry identity 2} with the cumulant expansion, we have
	\begin{equation*}
	\frac{1}{q^{p'-1}}\sum_{p''=1}^{\ell}\sum_{i\neq j}\sum_{k}^{(i)}\sum_{l}^{(k)}\frac{\mathcal{C}_{p''+1}}{N^{4}q^{p''-1}}\expec{G_{lj}\partial_{kl}^{p''}(G_{kk}^{1+p'/2}G_{ii}^{1+p/2+p'/2}G_{jj}^{p/2}P^{2r-1})} + \bigO_{\prec}\paren{\Phi_{r}}, \quad p'\ge 2.
	\end{equation*}
	By the same reasoning, the tricky case is that the derivative $\partial_{kl}$ only hits $G_{kk}$ and $p''$ is even, which give us an additional factor of $q^{-1}$. Repeating similar procedure many times enough, the desired estimate \eqref{eq: moment estimate s even s=p} follows.
	
	If $s$ is even and $0\le s < p$, a tricky part is to bound
	\begin{align*}
		\frac{1}{N^{2}}\sum_{i\neq j}\expec{G_{ij}G_{ii}^{s/2}G_{jj}^{s/2}(\deri^{p-s}P^{2r-1})},
	\end{align*}
	where each term has the only one off-diagonal entry. Due to \cite[Proposition A.1]{HLY20} and the Cauchy-Schwarz inequality, we have for $0\le s<p$,
	\begin{align*}
	\size{\sum_{i\neq j} G_{ij}G_{ii}^{s/2}G_{jj}^{s/2}(\deri^{p-s}P^{2r-1})} \prec \paren{\sum_{i,j}|G_{ij}|^{2}}^{1/2}\paren{\sum_{i,j}|\deri^{p-s}P^{2r-1}|^{2}}^{1/2}.
	\end{align*}
	Combining the above with the Ward identity and the estimate \eqref{eq: derivative P}, we get
	\begin{align}\label{eq: ss2}
	\frac{1}{N^{2}}\sum_{i\neq j}\expec{G_{ij}G_{ii}^{s/2}G_{jj}^{s/2}(\deri^{p-s}P^{2r-1})} = \bigO_{\prec}\paren{\Phi_{r}}.
	\end{align}
	We complete the proof.
\end{proof}

\subsection{Step 3.~Replacing every diagonal entry with the normalized trace}

Changing the order of summations, we can rewrite the conclusion of Proposition \ref{prop: moment estimates} as follows:
\begin{multline}\label{eq: for RME 3}
\sum_{p=1}^{\ell}\frac{\mathcal{C}_{p+1}}{N^{2}q^{p-1}}\sum_{i,j}\expec{\deri^{p}(G_{ij}P^{2r-1}) } \\
= - \sum_{\substack{0\le s \le \ell \\ s \equiv1(\Mod2)}} \sum_{p=s}^{\ell} \combi{p}{s}\frac{(s!)\mathcal{C}_{p+1}}{N^{2}q^{p-1}}
\sum_{i\neq j}\expec{G_{ii}^{\frac{s+1}{2}}G_{jj}^{\frac{s+1}{2}}(\deri^{p-s}P^{2r-1})} + \bigO_{\prec}\paren{\Phi_{r}}.
\end{multline}
In this subsection, we shall replace all diagonal Green function entries in the right-hand side of \eqref{eq: for RME 3} with the normalized trace $m$. In order to do that, we need a general version of Proposition \ref{prop: moment estimates}.

\begin{lemma}[General version of Proposition \ref{prop: moment estimates}]\label{lem: moment estimate general}
	Let $d\ge 0$ be a non-negative integer. Let $t\ge 1$ be a positive integer. Fix an integer $k\ge 1$. Let $\{u_{j}\}_{j=1}^{k}$ be a finite sequence of non-negative integers. Let $\{v_{j}\}_{j=1}^{k}$ be a finite sequence of positive integers. Let $D(P)$ be a $l$-th order derivative of $P^{2r-1}$ with $l\ge0$, and we use the convention that $D(P)=P^{2r-1}$ for $l=0$. Then, we have
	\begin{multline}\label{eq: t1}
	\sum_{p=1}^{\ell}\frac{\mathcal{C}_{p+1}}{N^{2}q^{p-1}}\sum_{x,y}\expec{ \partial_{xy}^{p}\paren{ m^{d}G_{yx}G_{ii}^{t} \bigg(\prod_{j=1}^{k} G_{v_{j}v_{j}}^{u_{j}}\bigg) D(P) } } \\
	= - \sum_{\substack{0\le s \le \ell \\ s \equiv1(\Mod2)}}\sum_{p=s}^{\ell} \combi{p}{s}\frac{(s!)\mathcal{C}_{p+1}}{N^{2}q^{p-1}} \sum_{x\neq y} \expec{ m^{d} G_{xx}^{\frac{s+1}{2}}G_{yy}^{\frac{s+1}{2}}G_{ii}^{t} \bigg(\prod_{j=1}^{k} G_{v_{j}v_{j}}^{u_{j}}\bigg) \bigg(\partial_{xy}^{p-s}D(P)\bigg) } + \bigO_{\prec}\paren{ \Phi_{r} },
	\end{multline}
	and
	\begin{multline}\label{eq: t2}
	\sum_{p=1}^{\ell}\frac{\mathcal{C}_{p+1}}{Nq^{p-1}}\sum_{x}\expec{ \partial_{ix}^{p}\paren{ m^{d+1}G_{xi}G_{ii}^{t-1} \bigg(\prod_{j=1}^{k} G_{v_{j}v_{j}}^{u_{j}}\bigg) D(P) } }  \\
	= - \sum_{\substack{0\le s \le \ell \\ s \equiv1(\Mod2)}}c_{s}\sum_{p=s}^{\ell} \combi{p}{s}\frac{(s!)\mathcal{C}_{p+1}}{Nq^{p-1}} \sum_{x} \expec{ m^{d+1} G_{xx}^{\frac{s+1}{2}}G_{ii}^{\frac{s+1}{2}}G_{ii}^{t-1} \bigg(\prod_{j=1}^{k} G_{v_{j}v_{j}}^{u_{j}}\bigg) \bigg(\partial_{ix}^{p-s}D(P)\bigg) } + \bigO_{\prec}\paren{ \Phi_{r} },
	\end{multline}
	where $c_{s}$ is a bounded coefficient for each $0\le s \le \ell$. In fact,
	\begin{equation}\label{eq: t3}
	c_{s} = \sum_{0\le \tilde{s}_{1},\cdots,\tilde{s}_{t}\le s } \indic[ \tilde{s}_{1}+\cdots+\tilde{s}_{t} = s]\times\indic[\tilde{s}_{1} \equiv1(\Mod2)]\times\indic[\tilde{s}_{2},\cdots,\tilde{s}_{t}\equiv0(\Mod2)],
	\end{equation}
	so $c_{1}=1$ in particular.
\begin{proof}
	The proof of this lemma is in Appendix \ref{appen: lemma proof}.
\end{proof}
\end{lemma}
Using Lemma \ref{lem: moment estimate general}, we start replacing a diagonal entry.
\begin{claim}[Replacement of a single diagonal entry]\label{claim: replacing single diagonal}
	Let $s_{1}$ be a positive odd integer. We have
	\begin{multline}\label{eq: single replacement}
	\sum_{p_{1}=s_{1}}^{\ell}\combi{p_{1}}{s_{1}}\frac{(s_{1}!)\mathcal{C}_{p_{1}+1}}{N^{2}q^{p_{1}-1}}
	\sum_{i\neq j}\expec{G_{ii}^{\frac{s_{1}+1}{2}}G_{jj}^{\frac{s_{1}+1}{2}}(\deri^{p_{1}-s_{1}}P^{2r-1})} \\
	= \sum_{p_{1}=s_{1}}^{\ell}\combi{p_{1}}{s_{1}}\frac{(s_{1}!)\mathcal{C}_{p_{1}+1}}{N^{2}q^{p_{1}-1}}
	\sum_{i\neq j}\expec{mG_{ii}^{\frac{s_{1}-1}{2}}G_{jj}^{\frac{s_{1}+1}{2}}(\deri^{p_{1}-s_{1}}P^{2r-1})} \\
	- \sum_{p_{1}=s_{1}}^{\ell}\sum_{p_{2}=2}^{\ell} \combi{p_{1}}{s_{1}}\combi{p_{2}}{1}\frac{(s_{1}!)\mathcal{C}_{p_{1}+1}(1!)\mathcal{C}_{p_{2}+1}}{N^{4}q^{p_{1}+p_{2}-2}} \sum_{i\neq j}\sum_{x\neq y} \expec{ G_{xx}G_{yy}G_{ii}^{\frac{s_{1}+1}{2}}G_{jj}^{\frac{s_{1}+1}{2}}(\partial_{xy}^{p_{2}-1}\deri^{p_{1}-s_{1}}P^{2r-1}) } \\
	- \sum_{\substack{1 < s_{2} \le \ell \\ s_{2} \equiv1(\Mod2)}} \sum_{p_{1}=s_{1}}^{\ell}\sum_{p_{2}=s_{2}}^{\ell} \combi{p_{1}}{s_{1}}\combi{p_{2}}{s_{2}}\frac{(s_{1}!)\mathcal{C}_{p_{1}+1}(s_{2}!)\mathcal{C}_{p_{2}+1}}{N^{4}q^{p_{1}+p_{2}-2}} \sum_{i\neq j}\sum_{x\neq y} \expec{ G_{xx}^{\frac{s_{2}+1}{2}}G_{yy}^{\frac{s_{2}+1}{2}}G_{ii}^{\frac{s_{1}+1}{2}}G_{jj}^{\frac{s_{1}+1}{2}}(\partial_{xy}^{p_{2}-s_{2}}\deri^{p_{1}-s_{1}}P^{2r-1}) } \\
	+ \sum_{p_{1}=s_{1}}^{\ell}\sum_{p_{2}=2}^{\ell} \combi{p_{1}}{s_{1}}\combi{p_{2}}{1}\frac{(s_{1}!)\mathcal{C}_{p_{1}+1}(1!)\mathcal{C}_{p_{2}+1}}{N^{3}q^{p_{1}+p_{2}-2}} \sum_{i\neq j}\sum_{x} \expec{ G_{xx}G_{ii}G_{ii}^{\frac{s_{1}-1}{2}}
		G_{jj}^{\frac{s_{1}+1}{2}}m(\partial_{ix}^{p_{2}-s_{2}}\deri^{p_{1}-s_{1}}P^{2r-1}) } \\
	+ \sum_{\substack{1 < s_{2} \le \ell \\ s_{2} \equiv1(\Mod2)}} c_{s_{2}} \sum_{p_{1}=s_{1}}^{\ell}\sum_{p_{2}=s_{2}}^{\ell} \combi{p_{1}}{s_{1}}\combi{p_{2}}{s_{2}}\frac{(s_{1}!)\mathcal{C}_{p_{1}+1}(s_{2}!)\mathcal{C}_{p_{2}+1}}{N^{3}q^{p_{1}+p_{2}-2}} \sum_{i\neq j}\sum_{x} \expec{ G_{xx}^{\frac{s_{2}+1}{2}}G_{ii}^{\frac{s_{2}+1}{2}}G_{ii}^{\frac{s_{1}-1}{2}}
		G_{jj}^{\frac{s_{1}+1}{2}}m(\partial_{ix}^{p_{2}-s_{2}}\deri^{p_{1}-s_{1}}P^{2r-1}) }
	\\ + \bigO_{\prec}\paren{ \Phi_{r} },
	\end{multline}
	where $c_{s_{2}}$ is a bounded coefficient for each $0\le s_{2} \le \ell$.
\begin{proof}
	According to \cite[Lemma 10.1]{HK21}, we have
	\begin{align*}
	G_{ij} = \delta_{ij}m + \frac{G_{ij}}{N}\sum_{x,y}h_{xy}G_{yx} - m\sum_{x}h_{ix}G_{xj}.
	\end{align*}
	If $i=j$, it follows that
	\begin{align}\label{eq: resolvent entry identity 3}
	G_{ii} = m + \frac{G_{ii}}{N}\sum_{x,y}h_{xy}G_{yx} - m\sum_{x}h_{ix}G_{xi}.
	\end{align}
	Using the identity \eqref{eq: resolvent entry identity 3}, we get
	\begin{multline}\label{eq: case s_1 > 1, auxiliary eq 1}
	\frac{1}{N^{2}}\sum_{i\neq j}\expec{G_{ii}^{\frac{s_{1}+1}{2}}G_{jj}^{\frac{s_{1}+1}{2}}(\deri^{p_{1}-s_{1}}P^{2r-1})} \\
	= \frac{1}{N^{2}}\sum_{i\neq j}\expec{mG_{ii}^{\frac{s_{1}-1}{2}}G_{jj}^{\frac{s_{1}+1}{2}}(\deri^{p_{1}-s_{1}}P^{2r-1})}
	+ \frac{1}{N^{3}}\sum_{i\neq j}\sum_{x,y}\expec{h_{xy}G_{yx}G_{ii}^{\frac{s_{1}+1}{2}}G_{jj}^{\frac{s_{1}+1}{2}}(\deri^{p_{1}-s_{1}}P^{2r-1})} \\
	- \frac{1}{N^{2}}\sum_{i\neq j}\sum_{x}\expec{h_{ix}G_{xi}G_{ii}^{\frac{s_{1}-1}{2}}G_{jj}^{\frac{s_{1}+1}{2}}m(\deri^{p_{1}-s_{1}}P^{2r-1})}.
	\end{multline}
	Applying the cumulant expansion to the last two terms on the right-hand side of \eqref{eq: case s_1 > 1, auxiliary eq 1}, we obtain
	\begin{multline}\label{eq: case s_1 > 1, auxiliary eq 2}
	\frac{1}{N^{3}}\sum_{i\neq j}\sum_{x,y}\expec{h_{xy}G_{yx}G_{ii}^{\frac{s_{1}+1}{2}}G_{jj}^{\frac{s_{1}+1}{2}}(\deri^{p_{1}-s_{1}}P^{2r-1})} 
	- \frac{1}{N^{2}}\sum_{i\neq j}\sum_{x}\expec{h_{ix}G_{xi}G_{ii}^{\frac{s_{1}-1}{2}}G_{jj}^{\frac{s_{1}+1}{2}}m(\deri^{p_{1}-s_{1}}P^{2r-1})} \\
	= 
	\frac{1}{N^{3}}\sum_{i\neq j}\sum_{x,y}\sum_{p_{2}=1}^{\ell}\frac{\mathcal{C}_{p_{2}+1}}{Nq^{p_{2}-1}}\expec{\partial_{xy}^{p_{2}}\paren{G_{yx}G_{ii}^{\frac{s_{1}+1}{2}}G_{jj}^{\frac{s_{1}+1}{2}}(\deri^{p_{1}-s_{1}}P^{2r-1})}}
	\\
	-\frac{1}{N^{2}}\sum_{i\neq j}\sum_{x}\sum_{p_{2}=1}^{\ell}\frac{\mathcal{C}_{p_{2}+1}}{Nq^{p_{2}-1}}\expec{\partial_{ix}^{p_{2}}\paren{G_{xi}G_{ii}^{\frac{s_{1}-1}{2}}G_{jj}^{\frac{s_{1}+1}{2}}m(\deri^{p_{1}-s_{1}}P^{2r-1})}} + \bigO_{\prec}\paren{ \Phi_{r} },
	\end{multline}
	Using Lemma \ref{lem: moment estimate general} to the right-hand side of \eqref{eq: case s_1 > 1, auxiliary eq 2} (with setting $d=0$, $t=\frac{s_{1}+1}{2}$, $k=1$, $v_{1}=j$, $u_{1}=\frac{s_{1}+1}{2}$, $D(P)=\deri^{p_{1}-s_{1}}P^{2r-1}$), we have
	\begin{multline}\label{eq: a}
	\frac{1}{N^{3}}\sum_{i\neq j}\sum_{x,y}\sum_{p_{2}=1}^{\ell}\frac{\mathcal{C}_{p_{2}+1}}{Nq^{p_{2}-1}}\expec{\partial_{xy}^{p_{2}}\paren{G_{yx}G_{ii}^{\frac{s_{1}+1}{2}}G_{jj}^{\frac{s_{1}+1}{2}}(\deri^{p_{1}-s_{1}}P^{2r-1})}} \\
	= - \sum_{\substack{0\le s_{2} \le \ell \\ s_{2} \equiv1(\Mod2)}} \sum_{p_{2}=s_{2}}^{\ell} \combi{p_{2}}{s_{2}}\frac{(s_{2}!)\mathcal{C}_{p_{2}+1}}{N^{4}q^{p_{2}-1}} \sum_{i\neq j}\sum_{x\neq y} \expec{ G_{xx}^{\frac{s_{2}+1}{2}}G_{yy}^{\frac{s_{2}+1}{2}}G_{ii}^{\frac{s_{1}+1}{2}}G_{jj}^{\frac{s_{1}+1}{2}}(\partial_{xy}^{p_{2}-s_{2}}\deri^{p_{1}-s_{1}}P^{2r-1}) } + \bigO_{\prec}\paren{ \Phi_{r} },
	\end{multline}
	and
	\begin{multline}\label{eq: b}
	\frac{1}{N^{2}}\sum_{i\neq j}\sum_{x}\sum_{p_{2}=1}^{\ell}\frac{\mathcal{C}_{p_{2}+1}}{Nq^{p_{2}-1}}\expec{\partial_{ix}^{p_{2}}\paren{G_{xi}G_{ii}^{\frac{s_{1}-1}{2}}G_{jj}^{\frac{s_{1}+1}{2}}m(\deri^{p_{1}-s_{1}}P^{2r-1})}}  \\
	= - \sum_{\substack{0\le s_{2} \le \ell \\ s_{2} \equiv1(\Mod2)}} c_{s_{2}} \sum_{p_{2}=s_{2}}^{\ell} \combi{p_{2}}{s_{2}}\frac{(s_{2}!)\mathcal{C}_{p_{2}+1}}{N^{3}q^{p_{2}-1}} \sum_{i\neq j}\sum_{x} \expec{ G_{xx}^{\frac{s_{2}+1}{2}}G_{ii}^{\frac{s_{2}+1}{2}}G_{ii}^{\frac{s_{1}-1}{2}}
		G_{jj}^{\frac{s_{1}+1}{2}}m(\partial_{ix}^{p_{2}-s_{2}}\deri^{p_{1}-s_{1}}P^{2r-1}) } + \bigO_{\prec}\paren{ \Phi_{r} }
	\end{multline}
	Combining \eqref{eq: case s_1 > 1, auxiliary eq 2}, \eqref{eq: a} and \eqref{eq: b}, it follows that
	\begin{multline}\label{eq: c}
	\frac{1}{N^{3}}\sum_{i\neq j}\sum_{x,y}\sum_{p_{2}=1}^{\ell}\frac{\mathcal{C}_{p_{2}+1}}{Nq^{p_{2}-1}}\expec{\partial_{xy}^{p_{2}}\paren{G_{yx}G_{ii}^{\frac{s_{1}+1}{2}}G_{jj}^{\frac{s_{1}+1}{2}}(\deri^{p_{1}-s_{1}}P^{2r-1})}}
	\\
	-\frac{1}{N^{2}}\sum_{i\neq j}\sum_{x}\sum_{p_{2}=1}^{\ell}\frac{\mathcal{C}_{p_{2}+1}}{Nq^{p_{2}-1}}\expec{\partial_{ix}^{p_{2}}\paren{G_{xi}G_{ii}^{\frac{s_{1}-1}{2}}G_{jj}^{\frac{s_{1}+1}{2}}m(\deri^{p_{1}-s_{1}}P^{2r-1})}} \\
	= - \sum_{\substack{0\le s_{2} \le \ell \\ s_{2} \equiv1(\Mod2)}} \sum_{p_{2}=s_{2}}^{\ell} \combi{p_{2}}{s_{2}}\frac{(s_{2}!)\mathcal{C}_{p_{2}+1}}{N^{4}q^{p_{2}-1}} \sum_{i\neq j}\sum_{x\neq y} \expec{ G_{xx}^{\frac{s_{2}+1}{2}}G_{yy}^{\frac{s_{2}+1}{2}}G_{ii}^{\frac{s_{1}+1}{2}}G_{jj}^{\frac{s_{1}+1}{2}}(\partial_{xy}^{p_{2}-s_{2}}\deri^{p_{1}-s_{1}}P^{2r-1}) } \\
	+ \sum_{\substack{0\le s_{2} \le \ell \\ s_{2} \equiv1(\Mod2)}} c_{s_{2}} \sum_{p_{2}=s_{2}}^{\ell} \combi{p_{2}}{s_{2}}\frac{(s_{2}!)\mathcal{C}_{p_{2}+1}}{N^{3}q^{p_{2}-1}} \sum_{i\neq j}\sum_{x} \expec{ G_{xx}^{\frac{s_{2}+1}{2}}G_{ii}^{\frac{s_{2}+1}{2}}G_{ii}^{\frac{s_{1}-1}{2}}
		G_{jj}^{\frac{s_{1}+1}{2}}m(\partial_{ix}^{p_{2}-s_{2}}\deri^{p_{1}-s_{1}}P^{2r-1}) } + \bigO_{\prec}\paren{ \Phi_{r} }.
	\end{multline}
    When $p_{2}=s_{2}=1$, there is a cancellation in the right-hand sides of \eqref{eq: c} as follows: using the fact that $c_{s_{2}}=1$ for $s_{2}=1$, we have
    \begin{multline*}
    \frac{1}{N^{4}} \sum_{i\neq j}\sum_{x\neq y} \expec{ G_{xx}G_{yy}G_{ii}^{\frac{s_{1}+1}{2}}G_{jj}^{\frac{s_{1}+1}{2}}(\deri^{p_{1}-s_{1}}P^{2r-1}) }
    - \frac{1}{N^{3}} \sum_{i\neq j}\sum_{x} \expec{ G_{xx}G_{ii}^{\frac{s_{1}+1}{2}}
    	G_{jj}^{\frac{s_{1}+1}{2}}m(\deri^{p_{1}-s_{1}}P^{2r-1}) } \\
    = \frac{1}{N^{4}} \sum_{i\neq j}\sum_{x,y} \expec{ G_{xx}G_{yy}G_{ii}^{\frac{s_{1}+1}{2}}G_{jj}^{\frac{s_{1}+1}{2}}(\deri^{p_{1}-s_{1}}P^{2r-1}) }
    - \frac{1}{N^{2}} \sum_{i\neq j} \expec{ G_{ii}^{\frac{s_{1}+1}{2}}
    	G_{jj}^{\frac{s_{1}+1}{2}}m^{2}(\deri^{p_{1}-s_{1}}P^{2r-1}) } + \bigO_{\prec}\paren{ \Phi_{r} } \\
    = \frac{1}{N^{2}} \sum_{i\neq j} \expec{ m^{2}G_{ii}^{\frac{s_{1}+1}{2}}
    	G_{jj}^{\frac{s_{1}+1}{2}}(\deri^{p_{1}-s_{1}}P^{2r-1}) }
    - \frac{1}{N^{2}} \sum_{i\neq j} \expec{ G_{ii}^{\frac{s_{1}+1}{2}}
    	G_{jj}^{\frac{s_{1}+1}{2}}m^{2}(\deri^{p_{1}-s_{1}}P^{2r-1}) } + \bigO_{\prec}\paren{ \Phi_{r} } \\
    = \bigO_{\prec}\paren{ \Phi_{r} },
    \end{multline*}
    where we use Proposition \ref{prop: bounds for derivatives} and \cite[Proposition A.1]{HLY20} to switch over from $\sum_{x\neq y}$ to $\sum_{x,y}$. Then, we get from \eqref{eq: c}
    \begin{multline}\label{eq: d}
    \frac{1}{N^{3}}\sum_{i\neq j}\sum_{x,y}\sum_{p_{2}=1}^{\ell}\frac{\mathcal{C}_{p_{2}+1}}{Nq^{p_{2}-1}}\expec{\partial_{xy}^{p_{2}}\paren{G_{yx}G_{ii}^{\frac{s_{1}+1}{2}}G_{jj}^{\frac{s_{1}+1}{2}}(\deri^{p_{1}-s_{1}}P^{2r-1})}}
    \\
    -\frac{1}{N^{2}}\sum_{i\neq j}\sum_{x}\sum_{p_{2}=1}^{\ell}\frac{\mathcal{C}_{p_{2}+1}}{Nq^{p_{2}-1}}\expec{\partial_{ix}^{p_{2}}\paren{G_{xi}G_{ii}^{\frac{s_{1}-1}{2}}G_{jj}^{\frac{s_{1}+1}{2}}m(\deri^{p_{1}-s_{1}}P^{2r-1})}} \\
    =  - \sum_{p_{2}=2}^{\ell} \combi{p_{2}}{1}\frac{(1!)\mathcal{C}_{p_{2}+1}}{N^{4}q^{p_{2}-1}} \sum_{i\neq j}\sum_{x\neq y} \expec{ G_{xx}G_{yy}G_{ii}^{\frac{s_{1}+1}{2}}G_{jj}^{\frac{s_{1}+1}{2}}(\partial_{xy}^{p_{2}-1}\deri^{p_{1}-s_{1}}P^{2r-1}) } \\
    - \sum_{\substack{1 < s_{2} \le \ell \\ s_{2} \equiv1(\Mod2)}} \sum_{p_{2}=s_{2}}^{\ell} \combi{p_{2}}{s_{2}}\frac{(s_{2}!)\mathcal{C}_{p_{2}+1}}{N^{4}q^{p_{2}-1}} \sum_{i\neq j}\sum_{x\neq y} \expec{ G_{xx}^{\frac{s_{2}+1}{2}}G_{yy}^{\frac{s_{2}+1}{2}}G_{ii}^{\frac{s_{1}+1}{2}}G_{jj}^{\frac{s_{1}+1}{2}}(\partial_{xy}^{p_{2}-s_{2}}\deri^{p_{1}-s_{1}}P^{2r-1}) } \\
    + \sum_{p_{2}=2}^{\ell} \combi{p_{2}}{1}\frac{(1!)\mathcal{C}_{p_{2}+1}}{N^{3}q^{p_{2}-1}} \sum_{i\neq j}\sum_{x} \expec{ G_{xx}G_{ii}G_{ii}^{\frac{s_{1}-1}{2}}
    	G_{jj}^{\frac{s_{1}+1}{2}}m(\partial_{ix}^{p_{2}-s_{2}}\deri^{p_{1}-s_{1}}P^{2r-1}) } \\
    + \sum_{\substack{1< s_{2} \le \ell \\ s_{2} \equiv1(\Mod2)}} c_{s_{2}} \sum_{p_{2}=s_{2}}^{\ell} \combi{p_{2}}{s_{2}}\frac{(s_{2}!)\mathcal{C}_{p_{2}+1}}{N^{3}q^{p_{2}-1}} \sum_{i\neq j}\sum_{x} \expec{ G_{xx}^{\frac{s_{2}+1}{2}}G_{ii}^{\frac{s_{2}+1}{2}}G_{ii}^{\frac{s_{1}-1}{2}}
    	G_{jj}^{\frac{s_{1}+1}{2}}m(\partial_{ix}^{p_{2}-s_{2}}\deri^{p_{1}-s_{1}}P^{2r-1}) } + \bigO_{\prec}\paren{ \Phi_{r} }.
    \end{multline}
    The claim follows from \eqref{eq: case s_1 > 1, auxiliary eq 1} and \eqref{eq: d}.
\end{proof}
\end{claim}

The right-hand side of \eqref{eq: single replacement} is too complicated so we introduce some new notations for brevity.
\begin{description}
	\item[Notation 1] For a positive integer $k$, we consider a set of index pairs
	\begin{equation*}
	\mathcal{I}_{k}=\{(i_{1},j_{1}),\cdots,(i_{k},j_{k})\}.
	\end{equation*}
	All indexes in the set $\mathcal{I}_{k}$ will be used to denote formal indexes in a summation. We allow the case that two formally distinct indexes such as $i_{l}$ and $i_{l'}$ (or $j_{l}$ and $j_{l'}$) are \emph{equivalent}, i.e.~$i_{l}\equiv i_{l'}$ (or $j_{l}\equiv j_{l'}$) where $l\neq l'$. If two formally distinct indexes are not equivalent, these are said to be \emph{non-equivalent}. If $i_{l}$ and $i_{l'}$ are non-equivalent, we denote this by $i_{l}\not\equiv i_{l'}$. Similarly, when $j_{l}$ and $j_{l'}$ are non-equivalent, we write $j_{l}\not\equiv j_{l'}$.
	
	We denote the maximal number of (pairwise) non-equivalent indexes in $\mathcal{I}_{k}$ by $\mathfrak{k}=\mathfrak{k}(\mathcal{I}_{k})$, and write
	\begin{align*}
	\{i_{1},j_{1},\cdots,i_{k},j_{k}\}=\{\mathfrak{i}_{1},\cdots,\mathfrak{i}_{\mathfrak{k}}\},
	\end{align*}
	where each $\mathfrak{i}_{l}$ $(1\le l\le\mathfrak{k})$ is an element in a set of equivalent indexes, and the set $\{\mathfrak{i}_{1},\cdots,\mathfrak{i}_{\mathfrak{k}}\}$ consists of non-equivalent indexes. For example, we consider the case that $k=2$. We have $\mathcal{I}_{2}=\{(i_{1},j_{1}),(i_{2},j_{2})\}$. If $i_{1},j_{1},j_{2}$ are non-equivalent but $i_{2}\equiv i_{1}$, then we have $\mathfrak{k}=3$ and also write $\{\mathfrak{i}_{1},\mathfrak{i}_{2},\mathfrak{i}_{3}\}=\{i_{1},j_{1},j_{2}\}$.
	
	We denote the summation over all distinct non-equivalent indexes $\{\mathfrak{i}_{1},\cdots,\mathfrak{i}_{\mathfrak{k}}\}$ by
	\begin{align}\label{eq: sum notation}
	\sum_{\mathcal{I}_{k}}\coloneqq\sum_{1\le \mathfrak{i}_{1},\cdots,\mathfrak{i}_{\mathfrak{k}} \le N }\indic[ \text{ $\mathfrak{i}_{1},\cdots,\mathfrak{i}_{\mathfrak{k}}$ are all distinct} ]
	\end{align}
	Let $\theta=\theta(\mathcal{I}_{k})$ be the cardinality of the set
	\begin{align}\label{eq: def of theta}
	\{ 1\le l\le k : \text{$i_{l}\not\equiv i_{l'}$ and $j_{l}\not\equiv j_{l'}$ for all $1\le l'<l$}\}.
	\end{align}
	
	For every positive integer $k\ge1$, we denote by $\mathcal{J}_{k}$ the collection of all $\mathcal{I}_{k}$ satisfying the following properties:
	\begin{itemize}
		\item[(P1)] For every $1\le l,l'\le k$, the index $i_{l}$ and $j_{l'}$ are non-equivalent.
		\item[(P2)] An index $i_{l}$ can be equivalent to $i_{l'}$ for some $l'<l$ (but not necessarily). Similarly an index $j_{l}$ can be equivalent to $j_{l'}$ for some $l'<l$ (but not necessarily).
		\item[(P3)] Suppose $l'<l$. If $i_{l}\equiv i_{l'}$, then $j_{l}$ is not equivalent to every index in $\{j_{r}:i_{r}=i_{l'}, 1\le r\le l-1\}$. Similarly, if $j_{l}\equiv j_{l'}$, then $i_{l}$ is not equivalent to every index in $\{i_{r}:j_{r}=j_{l'}, 1\le r\le l-1\}$.
	\end{itemize}
	In other words,
	\begin{equation}\label{eq: def of J_k}
	\mathcal{J}_{k} \coloneqq \{ \mathcal{I}_{k} : \text{$\mathcal{I}_{k}$ satisfies (P1), P(2) and (P3)} \}.
	\end{equation}
	\item[Notation 2] For an positive odd integer $s$ such that $s\le p$, we use the notation
	\begin{align}\label{eq: def of [p;s]}
	\sum_{[p;s]_{k}} = \begin{cases}
	\sum_{p=2}^{\ell}\combi{p}{1}\frac{(1!)\mathcal{C}_{p+1}}{q^{p-1}} & \text{$k>1$ and $s=1$},\\
	& \\
	\sum_{p=s}^{\ell}\combi{p}{s}\frac{(s!)\mathcal{C}_{p+1}}{q^{p-1}} & \text{otherwise}.
	\end{cases}
	\end{align}
\end{description}

\begin{remark}
	Throughout this paper, we always consider $\mathcal{I}_{k}\in\mathcal{J}_{k}$.
\end{remark}

Using the above new notation, we can rewrite the estimate \eqref{eq: single replacement}. First we consider
\begin{equation*}
\sum_{p_{1}=s_{1}}^{\ell}\combi{p_{1}}{s_{1}}\frac{(s_{1}!)\mathcal{C}_{p_{1}+1}}{N^{2}q^{p_{1}-1}}
\sum_{i\neq j}\expec{G_{ii}^{\frac{s_{1}+1}{2}}G_{jj}^{\frac{s_{1}+1}{2}}(\deri^{p_{1}-s_{1}}P^{2r-1})},
\end{equation*}
and
\begin{equation*}
\sum_{p_{1}=s_{1}}^{\ell}\combi{p_{1}}{s_{1}}\frac{(s_{1}!)\mathcal{C}_{p_{1}+1}}{N^{2}q^{p_{1}-1}}
\sum_{i\neq j}\expec{mG_{ii}^{\frac{s_{1}-1}{2}}G_{jj}^{\frac{s_{1}+1}{2}}(\deri^{p_{1}-s_{1}}P^{2r-1})}.
\end{equation*}
Simply setting $i_{1}\equiv i$, $j_{1}\equiv j$ and $k=1$, we have $\mathcal{I}_{1}\in\mathcal{J}_{1}$ and rewrite the above as follows:
\begin{equation}\label{eq: simple form 1}
\sum_{[p_{1};s_{1}]_{1}}\frac{1}{N^{1+\theta(\mathcal{I}_{1})}}\sum_{\mathcal{I}_{1}}
\expec{G_{i_{1}i_{1}}^{\frac{s_{1}+1}{2}}G_{j_{1}j_{1}}^{\frac{s_{1}+1}{2}}(\partial_{i_{1}j_{1}}^{p_{1}-s_{1}}P^{2r-1})}
\end{equation}
and
\begin{equation}\label{eq: simple form 2}
\sum_{[p_{1};s_{1}]_{1}}\frac{1}{N^{1+\theta(\mathcal{I}_{1})}}\sum_{\mathcal{I}_{1}}\expec{mG_{i_{1}i_{1}}^{\frac{s_{1}-1}{2}}G_{j_{1}j_{1}}^{\frac{s_{1}+1}{2}}(\partial_{i_{1}j_{1}}^{p_{1}-s_{1}}P^{2r-1})}.
\end{equation}
where we refer to \eqref{eq: def of theta} for the definition of $\theta(\mathcal{I}_{k})$ and note that $\theta(\mathcal{I}_{1})=1$.
Next we consider
\begin{multline*}
\sum_{p_{1}=s_{1}}^{\ell}\sum_{p_{2}=2}^{\ell} \combi{p_{1}}{s_{1}}\combi{p_{2}}{1}\frac{(s_{1}!)\mathcal{C}_{p_{1}+1}(1!)\mathcal{C}_{p_{2}+1}}{N^{4}q^{p_{1}+p_{2}-2}} \sum_{i\neq j}\sum_{x\neq y} \expec{ G_{xx}G_{yy}G_{ii}^{\frac{s_{1}+1}{2}}G_{jj}^{\frac{s_{1}+1}{2}}(\partial_{xy}^{p_{2}-1}\deri^{p_{1}-s_{1}}P^{2r-1}) } \\
+ \sum_{\substack{1 < s_{2} \le \ell \\ s_{2} \equiv1(\Mod2)}} \sum_{p_{1}=s_{1}}^{\ell}\sum_{p_{2}=s_{2}}^{\ell} \combi{p_{1}}{s_{1}}\combi{p_{2}}{s_{2}}\frac{(s_{1}!)\mathcal{C}_{p_{1}+1}(s_{2}!)\mathcal{C}_{p_{2}+1}}{N^{4}q^{p_{1}+p_{2}-2}} \sum_{i\neq j}\sum_{x\neq y} \expec{ G_{xx}^{\frac{s_{2}+1}{2}}G_{yy}^{\frac{s_{2}+1}{2}}G_{ii}^{\frac{s_{1}+1}{2}}G_{jj}^{\frac{s_{1}+1}{2}}(\partial_{xy}^{p_{2}-s_{2}}\deri^{p_{1}-s_{1}}P^{2r-1}) }
\end{multline*}
By setting $i_{1}\equiv i$, $j_{1}\equiv j$, $i_{2}\equiv x$, $j_{2}\equiv y$ and $k=2$, we have $\mathcal{I}_{2}\in\mathcal{J}_{2}$ and rewrite the above as follows:
\begin{equation}\label{eq: simple form 3}
\sum_{\substack{0 \le s_{2} \le \ell \\ s_{2} \equiv1(\Mod2)}} \sum_{[p_{2};s_{2}]_{2}}\sum_{[p_{1};s_{1}]_{1}}\frac{1}{N^{2+\theta(\mathcal{I}_{2})}}\sum_{\mathcal{I}_{2}} \expec{G_{i_{2}i_{2}}^{\frac{s_{2}+1}{2}}G_{j_{2}j_{2}}^{\frac{s_{2}+1}{2}}G_{i_{1}i_{1}}^{\frac{s_{1}+1}{2}}G_{j_{1}j_{1}}^{\frac{s_{1}+1}{2}}(\partial_{i_{2}j_{2}}^{p_{2}-s_{2}}\partial_{i_{1}j_{1}}^{p_{1}-s_{1}}P^{2r-1})} + \bigO_{\prec}\paren{ \Phi_{r} },
\end{equation}
where we use Proposition \ref{prop: bounds for derivatives} and \cite[Proposition A.1]{HLY20} to switch over from $\sum_{i\neq j}\sum_{x\neq y}$ to $\sum_{\mathcal{I}_{2}}$.
We also consider the summations below,
\begin{multline}\label{eq: target}
\sum_{p_{1}=s_{1}}^{\ell}\sum_{p_{2}=2}^{\ell} \combi{p_{1}}{s_{1}}\combi{p_{2}}{1}\frac{(s_{1}!)\mathcal{C}_{p_{1}+1}(1!)\mathcal{C}_{p_{2}+1}}{N^{3}q^{p_{1}+p_{2}-2}} \sum_{i\neq j}\sum_{x} \expec{ G_{xx}G_{ii}G_{ii}^{\frac{s_{1}-1}{2}}
	G_{jj}^{\frac{s_{1}+1}{2}}m(\partial_{ix}^{p_{2}-s_{2}}\deri^{p_{1}-s_{1}}P^{2r-1}) } \\
+ \sum_{\substack{0\le s_{2} \le \ell \\ s_{2} \equiv1(\Mod2)}} c_{s_{2}} \sum_{p_{1}=s_{1}}^{\ell}\sum_{p_{2}=s_{2}}^{\ell} \combi{p_{1}}{s_{1}}\combi{p_{2}}{s_{2}}\frac{(s_{1}!)\mathcal{C}_{p_{1}+1}(s_{2}!)\mathcal{C}_{p_{2}+1}}{N^{3}q^{p_{1}+p_{2}-2}} \sum_{i\neq j}\sum_{x} \expec{ G_{xx}^{\frac{s_{2}+1}{2}}G_{ii}^{\frac{s_{2}+1}{2}}G_{ii}^{\frac{s_{1}-1}{2}}
	G_{jj}^{\frac{s_{1}+1}{2}}m(\partial_{ix}^{p_{2}-s_{2}}\deri^{p_{1}-s_{1}}P^{2r-1}) }.
\end{multline}
We set $i_{1}\equiv i_{2}\equiv i$, $j_{1}\equiv j$, $j_{2}\equiv x$ and $k=2$. Then, we have $\mathcal{I}_{2}\in\mathcal{J}_{2}$. Note that $\theta(\mathcal{I}_{2})=1$ in this setting. We can replace \eqref{eq: target} with the following:
\begin{equation}\label{eq: simple form 4}
\sum_{\substack{0\le s_{2} \le \ell \\ s_{2} \equiv1(\Mod2)}} c_{s_{2}}\sum_{[p_{2};s_{2}]_{2}}\sum_{[p_{1};s_{1}]_{1}}\frac{1}{N^{2+\theta(\mathcal{I}_{2})}}\sum_{\mathcal{I}_{2}} \expec{G_{i_{2}i_{2}}^{\frac{s_{2}+1}{2}}G_{j_{2}j_{2}}^{\frac{s_{2}+1}{2}}G_{i_{1}i_{1}}^{\frac{s_{1}-1}{2}}G_{j_{1}j_{1}}^{\frac{s_{1}+1}{2}}m(\partial_{i_{2}j_{2}}^{p_{2}-s_{2}}\partial_{i_{1}j_{1}}^{p_{1}-s_{1}}P^{2r-1})} + \bigO_{\prec}\paren{ \Phi_{r} },
\end{equation}
where we use that $c_{s_{2}}=1$ when $s_{2}=1$, and switch over from $\sum_{i\neq j}\sum_{x}$ to $\sum_{\mathcal{I}_{2}}$. Now we can restate Claim \ref{claim: replacing single diagonal} as below.

\begin{claim}[Simple form of Claim \ref{claim: replacing single diagonal}]\label{claim: replacing single diagonal 2}
	Let $s_{1}$ be a positive odd integer. Let $\mathcal{J}_{2}$ be as in \eqref{eq: def of J_k} (with $k=2$). Then, we have
	\begin{multline}\label{eq: replacing diagonal first time}
	\sum_{[p_{1};s_{1}]_{1}}\frac{1}{N^{1+\theta(\mathcal{I}_{1})}}\sum_{\mathcal{I}_{1}}
	\expec{G_{i_{1}i_{1}}^{\frac{s_{1}+1}{2}}G_{j_{1}j_{1}}^{\frac{s_{1}+1}{2}}(\partial_{i_{1}j_{1}}^{p_{1}-s_{1}}P^{2r-1})}\\
	= \sum_{[p_{1};s_{1}]_{1}}\frac{1}{N^{1+\theta(\mathcal{I}_{1})}}\sum_{\mathcal{I}_{1}}\expec{mG_{i_{1}i_{1}}^{\frac{s_{1}-1}{2}}G_{j_{1}j_{1}}^{\frac{s_{1}+1}{2}}(\partial_{i_{1}j_{1}}^{p_{1}-s_{1}}P^{2r-1})} \\
	+ \sum_{\mathcal{I}_{2}\in\mathcal{J}_{2}} \sum_{\substack{0\le s_{2} \le \ell \\ s_{2} \equiv1(\Mod2)}} \sum_{[p_{2};s_{2}]_{2}}\sum_{[p_{1};s_{1}]_{1}} \frac{c_{\mathcal{I}_{2},s_{2}}}{N^{2+\theta(\mathcal{I}_{2})}}\sum_{\mathcal{I}_{2}} \expec{G_{i_{2}i_{2}}^{\frac{s_{2}+1}{2}}G_{j_{2}j_{2}}^{\frac{s_{2}+1}{2}}G_{i_{1}i_{1}}^{\frac{s_{1}+1}{2}-\indic(i_{2}\equiv i_{1})}G_{j_{1}j_{1}}^{\frac{s_{1}+1}{2}}m^{\indic(i_{2}\equiv i_{1})}(\partial_{i_{2}j_{2}}^{p_{2}-s_{2}}\partial_{i_{1}j_{1}}^{p_{1}-s_{1}}P^{2r-1})} \\
	+ \bigO_{\prec}\paren{ \Phi_{r} },
	\end{multline}
	where each $c_{\mathcal{I}_{2},s_{2}}$ is a constant only depending on $\mathcal{I}_{2}\in\mathcal{J}_{2}$ and $s_{2} \equiv1(\Mod2)$. (Refer to \eqref{eq: def of [p;s]} for the definition of $\sum_{[p;s]_{k}}$.)
\begin{proof}
	We compare \eqref{eq: single replacement} and \eqref{eq: replacing diagonal first time}. It immediately follows from \eqref{eq: simple form 1}, \eqref{eq: simple form 2}, \eqref{eq: simple form 3} and \eqref{eq: simple form 4}.
\end{proof}
\end{claim}
\begin{remark}
	In fact, $c_{\mathcal{I}_{2},s_{2}}=0$ if $\mathcal{I}_{2}$ does not belong to one of the following two cases: (case 1) $i_{1},j_{1},i_{2},j_{2}$ are all non-equivalent; (case 2) $i_{1},j_{1},j_{2}$ are all non-equivalent but $i_{2}\equiv i_{1}$.
\end{remark}

Now we just replace one diagonal entry. We want to replace all diagonal entries with the normalized trace $m$. We need the following lemma for that.
\begin{lemma}\label{lem: replacing single diagoanl general version}
	Let $d\ge 0$ be a non-negative integer. Let $t\ge 1$ be a positive integer. Fix an integer $k\ge 1$. Let $\{u_{j}\}_{j=1}^{k}$ be a finite sequence of non-negative integers. Let $\{v_{j}\}_{j=1}^{k}$ be a finite sequence of positive integers. Let $D(P)$ be a $l$-th order derivative of $P^{2r-1}$ with $l\ge0$, and we use the convention that $D(P)=P^{2r-1}$ for $l=0$. Then, we have
	\begin{multline*}
	\expec{ m^{d} G_{ii}^{t} \bigg(\prod_{j=1}^{k} G_{v_{j}v_{j}}^{u_{j}}\bigg) D(P) }
	= \expec{ m^{d+1} G_{ii}^{t-1} \bigg(\prod_{j=1}^{k} G_{v_{j}v_{j}}^{u_{j}}\bigg) D(P) } \\
	- \sum_{\substack{0\le s \le \ell \\ s \equiv1(\Mod2)}}\sum_{[p;s]} \frac{1}{N^{2}} \sum_{x\neq y} \expec{ m^{d} G_{xx}^{\frac{s+1}{2}}G_{yy}^{\frac{s+1}{2}}G_{ii}^{t} \bigg(\prod_{j=1}^{k} G_{v_{j}v_{j}}^{u_{j}}\bigg) \bigg(\partial_{xy}^{p-s}D(P)\bigg) } \\
	+ \sum_{\substack{0\le s \le \ell \\ s \equiv1(\Mod2)}} c_{s} \sum_{[p;s]} \frac{1}{N} \sum_{x} \expec{ m^{d+1} G_{xx}^{\frac{s+1}{2}}G_{ii}^{\frac{s+1}{2}}G_{ii}^{t-1} \bigg(\prod_{j=1}^{k} G_{v_{j}v_{j}}^{u_{j}}\bigg) \bigg(\partial_{ix}^{p-s}D(P)\bigg) } + \bigO_{\prec}\paren{ \Phi_{r} },
	\end{multline*}
	where each coefficient $c_{s}$ is as in Lemma \ref{lem: moment estimate general} and,
	\begin{equation*}
	\sum_{[p;s]} = \begin{cases}
	\sum_{p=2}^{\ell}\combi{p}{1}\frac{(1!)\mathcal{C}_{p+1}}{q^{p-1}} & \text{$s=1$},\\
	& \\
	\sum_{p=s}^{\ell}\combi{p}{s}\frac{(s!)\mathcal{C}_{p+1}}{q^{p-1}} & \text{$s>1$}.
	\end{cases}
	\end{equation*}
\begin{proof}
	We can prove this following the proof of Claim \ref{claim: replacing single diagonal}.
	\begin{align*} 
	G_{ii} = m + \frac{G_{ii}}{N}\sum_{x,y}h_{xy}G_{yx} - m\sum_{x}h_{ix}G_{xi}
	\end{align*}
	Using the identity \eqref{eq: resolvent entry identity 3}, we have
	\begin{multline}\label{eq: e1}
	m^{d} G_{ii}^{t} \bigg(\prod_{j=1}^{k} G_{v_{j}v_{j}}^{u_{j}}\bigg) D(P) 
	= m^{d+1} G_{ii}^{t-1} \bigg(\prod_{j=1}^{k} G_{v_{j}v_{j}}^{u_{j}}\bigg) D(P) \\
	+ \frac{1}{N}\sum_{x,y} h_{xy}m^{d}G_{yx}G_{ii}^{t} \bigg(\prod_{j=1}^{k} G_{v_{j}v_{j}}^{u_{j}}\bigg) D(P)
	- \sum_{x} h_{ix}m^{d+1}G_{xi}G_{ii}^{t-1} \bigg(\prod_{j=1}^{k} G_{v_{j}v_{j}}^{u_{j}}\bigg) D(P).
	\end{multline}
    Applying the cumulant expansion to the last two terms, we get
	\begin{multline*}
	\frac{1}{N}\sum_{x,y}\expec{ h_{xy}m^{d}G_{yx}G_{ii}^{t} \bigg(\prod_{j=1}^{k} G_{v_{j}v_{j}}^{u_{j}}\bigg) D(P) } \\
	= \sum_{p=1}^{\ell}\frac{\mathcal{C}_{p+1}}{N^{2}q^{p-1}}\sum_{x,y}\expec{ \partial_{xy}^{p}\paren{ m^{d}G_{yx}G_{ii}^{t} \bigg(\prod_{j=1}^{k} G_{v_{j}v_{j}}^{u_{j}}\bigg) D(P) } } + \bigO_{\prec}\paren{ \Phi_{r} },
	\end{multline*}
	and
	\begin{multline*}
	\sum_{x}\expec{ h_{ix}m^{d+1}G_{xi}G_{ii}^{t-1} \bigg(\prod_{j=1}^{k} G_{v_{j}v_{j}}^{u_{j}}\bigg) D(P) } \\
	= \sum_{p=1}^{\ell}\frac{\mathcal{C}_{p+1}}{Nq^{p-1}}\sum_{x}\expec{ \partial_{ix}^{p}\paren{ m^{d+1}G_{xi}G_{ii}^{t-1} \bigg(\prod_{j=1}^{k} G_{v_{j}v_{j}}^{u_{j}}\bigg) D(P) } } + \bigO_{\prec}\paren{ \Phi_{r} }.
	\end{multline*}
	Using Lemma \ref{lem: moment estimate general} directly, we obtain
	\begin{multline}\label{eq: e}
	\sum_{p=1}^{\ell}\frac{\mathcal{C}_{p+1}}{N^{2}q^{p-1}}\sum_{x,y}\expec{ \partial_{xy}^{p}\paren{ m^{d}G_{yx}G_{ii}^{t} \bigg(\prod_{j=1}^{k} G_{v_{j}v_{j}}^{u_{j}}\bigg) D(P) } } \\ 
	- \sum_{p=1}^{\ell}\frac{\mathcal{C}_{p+1}}{Nq^{p-1}}\sum_{x}\expec{ \partial_{ix}^{p}\paren{ m^{d+1}G_{xi}G_{ii}^{t-1} \bigg(\prod_{j=1}^{k} G_{v_{j}v_{j}}^{u_{j}}\bigg) D(P) } } \\
	= - \sum_{\substack{0\le s \le \ell \\ s \equiv1(\Mod2)}}\sum_{p=s}^{\ell} \combi{p}{s}\frac{(s!)\mathcal{C}_{p+1}}{N^{2}q^{p-1}} \sum_{x\neq y} \expec{ m^{d} G_{xx}^{\frac{s+1}{2}}G_{yy}^{\frac{s+1}{2}}G_{ii}^{t} \bigg(\prod_{j=1}^{k} G_{v_{j}v_{j}}^{u_{j}}\bigg) \bigg(\partial_{xy}^{p-s}D(P)\bigg) } \\
	 + \sum_{\substack{0\le s \le \ell \\ s \equiv1(\Mod2)}}c_{s}\sum_{p=s}^{\ell} \combi{p}{s}\frac{(s!)\mathcal{C}_{p+1}}{Nq^{p-1}} \sum_{x} \expec{ m^{d+1} G_{xx}^{\frac{s+1}{2}}G_{ii}^{\frac{s+1}{2}}G_{ii}^{t-1} \bigg(\prod_{j=1}^{k} G_{v_{j}v_{j}}^{u_{j}}\bigg) \bigg(\partial_{ix}^{p-s}D(P)\bigg) } + \bigO_{\prec}\paren{ \Phi_{r} },
	\end{multline}
	where each $c_{s}$ is as in Lemma \ref{lem: moment estimate general}. For the case that $p=1$ and $s=1$, we find a cancellation in the right-hand sides of \eqref{eq: e} as follows: using the fact that $c_{s}=1$ for $s=1$, we have
	\begin{multline*}
	\frac{1}{N^{2}} \sum_{x\neq y} \expec{ m^{d} G_{xx}G_{yy}G_{ii}^{t} \bigg(\prod_{j=1}^{k} G_{v_{j}v_{j}}^{u_{j}}\bigg) D(P) } 
	- \frac{1}{N} \sum_{x} \expec{ m^{d+1} G_{xx}G_{ii}^{t} \bigg(\prod_{j=1}^{k} G_{v_{j}v_{j}}^{u_{j}}\bigg) D(P) } \\
	= \frac{1}{N^{2}} \sum_{x,y} \expec{ m^{d} G_{xx}G_{yy}G_{ii}^{t} \bigg(\prod_{j=1}^{k} G_{v_{j}v_{j}}^{u_{j}}\bigg) D(P) } 
	- \expec{ m^{d+2} G_{ii}^{t} \bigg(\prod_{j=1}^{k} G_{v_{j}v_{j}}^{u_{j}}\bigg) D(P) } + \bigO_{\prec}\paren{ \Phi_{r} } \\
	= \expec{ m^{d+2}G_{ii}^{t} \bigg(\prod_{j=1}^{k} G_{v_{j}v_{j}}^{u_{j}}\bigg) D(P) } 
	- \expec{ m^{d+2} G_{ii}^{t} \bigg(\prod_{j=1}^{k} G_{v_{j}v_{j}}^{u_{j}}\bigg) D(P) } + \bigO_{\prec}\paren{ \Phi_{r} } \\
	= \bigO_{\prec}\paren{ \Phi_{r} },
	\end{multline*}
	where we use Proposition \ref{prop: bounds for derivatives} and \cite[Proposition A.1]{HLY20} to switch over from $\sum_{x\neq y}$ to $\sum_{x,y}$. Due to the above cancellation, we get
	\begin{multline}\label{eq: e2}
	- \sum_{\substack{0\le s \le \ell \\ s \equiv1(\Mod2)}}\sum_{p=s}^{\ell} \combi{p}{s}\frac{(s!)\mathcal{C}_{p+1}}{N^{2}q^{p-1}} \sum_{x\neq y} \expec{ m^{d} G_{xx}^{\frac{s+1}{2}}G_{yy}^{\frac{s+1}{2}}G_{ii}^{t} \bigg(\prod_{j=1}^{k} G_{v_{j}v_{j}}^{u_{j}}\bigg) \bigg(\partial_{xy}^{p-s}D(P)\bigg) } \\
	+ \sum_{\substack{0\le s \le \ell \\ s \equiv1(\Mod2)}}c_{s}\sum_{p=s}^{\ell} \combi{p}{s}\frac{(s!)\mathcal{C}_{p+1}}{Nq^{p-1}} \sum_{x} \expec{ m^{d+1} G_{xx}^{\frac{s+1}{2}}G_{ii}^{\frac{s+1}{2}}G_{ii}^{t-1} \bigg(\prod_{j=1}^{k} G_{v_{j}v_{j}}^{u_{j}}\bigg) \bigg(\partial_{ix}^{p-s}D(P)\bigg) } \\
	= - \sum_{\substack{0\le s \le \ell \\ s \equiv1(\Mod2)}}\sum_{[p;s]} \frac{1}{N^{2}} \sum_{x\neq y} \expec{ m^{d} G_{xx}^{\frac{s+1}{2}}G_{yy}^{\frac{s+1}{2}}G_{ii}^{t} \bigg(\prod_{j=1}^{k} G_{v_{j}v_{j}}^{u_{j}}\bigg) \bigg(\partial_{xy}^{p-s}D(P)\bigg) } \\
	+ \sum_{\substack{0\le s \le \ell \\ s \equiv1(\Mod2)}}\sum_{[p;s]} \frac{c_{s}}{N} \sum_{x} \expec{ m^{d+1} G_{xx}^{\frac{s+1}{2}}G_{ii}^{\frac{s+1}{2}}G_{ii}^{t-1} \bigg(\prod_{j=1}^{k} G_{v_{j}v_{j}}^{u_{j}}\bigg) \bigg(\partial_{ix}^{p-s}D(P)\bigg) } + \bigO_{\prec}\paren{ \Phi_{r} }.
	\end{multline}
	The lemma follows from \eqref{eq: e1}, \eqref{eq: e} and \eqref{eq: e2}.
\end{proof}
\end{lemma}

Using Lemma \ref{lem: replacing single diagoanl general version}, we can replace all diagonal entries in the left-hand side of \eqref{eq: replacing diagonal first time} one by one.
\begin{proposition}\label{prop: replacing all diagonals}
	Suppose Assumption \ref{assump: random correction term 1} and Assumption \ref{assump: random correction term 2} hold. Let $s_{1}$ be a positive odd integer such that $s_{1}\le\ell$. The difference,
	\begin{equation*}
	\sum_{[p_{1};s_{1}]_{1}}\frac{1}{N^{1+\theta(\mathcal{I}_{1})}}\sum_{\mathcal{I}_{1}}
	\expec{G_{i_{1}i_{1}}^{\frac{s_{1}+1}{2}}G_{j_{1}j_{1}}^{\frac{s_{1}+1}{2}}(\partial_{i_{1}j_{1}}^{p_{1}-s_{1}}P^{2r-1})} - \sum_{[p_{1};s_{1}]_{1}}\frac{1}{N^{1+\theta(\mathcal{I}_{1})}}\sum_{\mathcal{I}_{1}}
	\expec{m^{s_{1}+1}(\partial_{i_{1}j_{1}}^{p_{1}-s_{1}}P^{2r-1})},
	\end{equation*}
	is a linear combination of the terms of the following form (with bounded coefficients):
	\begin{equation}\label{eq: form}
	\sum_{[p_{2};s_{2}]_{2}}\sum_{[p_{1};s_{1}]_{1}}\sum_{\mathcal{I}_{2}} \expec{T} + \bigO_{\prec}(\Phi_{r}),
	\end{equation}
	where $s_{2}$ is a positive odd integer
	such that $s_{2}\le\ell$, we have $\mathcal{I}_{2}\in\mathcal{J}_{2}$ (refer to \eqref{eq: def of J_k} for the definition of $\mathcal{J}_{k}$), and for some $0\le \alpha_{1} \le\frac{s_{1}-1}{2}$,
	\begin{align*}
	T = \frac{1}{N^{2+\theta(\mathcal{I}_{2})}}G_{i_{2}i_{2}}^{\frac{s_{2}+1}{2}}G_{j_{2}j_{2}}^{\frac{s_{2}+1}{2}}G_{i_{1}i_{1}}^{\frac{s_{1}+1}{2}}G_{j_{1}j_{1}}^{\frac{s_{1}+1}{2}-\alpha_{1}-\indic(j_{2}\equiv j_{1})}m^{\alpha_{1}+\indic(j_{2}\equiv j_{1})}
	\paren{\partial_{i_{2}j_{2}}^{p_{2}-s_{2}}\partial_{i_{1}j_{1}}^{p_{1}-s_{1}}P^{2r-1}},
	\end{align*}
	or
	\begin{align*}
	T = \frac{1}{N^{2+\theta(\mathcal{I}_{2})}}G_{i_{2}i_{2}}^{\frac{s_{2}+1}{2}}G_{j_{2}j_{2}}^{\frac{s_{2}+1}{2}}G_{i_{1}i_{1}}^{\frac{s_{1}+1}{2}-\alpha_{1}-\indic(i_{2}\equiv i_{1})}m^{\frac{s_{1}+1}{2}+\alpha_{1}+\indic(i_{2}\equiv i_{1})}
	\paren{\partial_{i_{2}j_{2}}^{p_{2}-s_{2}}\partial_{i_{1}j_{1}}^{p_{1}-s_{1}}P^{2r-1}}.
	\end{align*}
	(Refer to \eqref{eq: def of [p;s]} for the definition of $\sum_{[p;s]_{k}}$.)\\
	In fact, the length of the linear combination can be bounded by a large constant only depending on $\ell$ but not $N$.
\begin{proof}
	This time we replace $G_{j_{1}j_{1}}$ first. By symmetry, it follows from Claim \ref{claim: replacing single diagonal 2} that the difference,
	\begin{equation*}
	\sum_{[p_{1};s_{1}]_{1}}\frac{1}{N^{1+\theta(\mathcal{I}_{1})}}\sum_{\mathcal{I}_{1}}
	\expec{G_{i_{1}i_{1}}^{\frac{s_{1}+1}{2}}G_{j_{1}j_{1}}^{\frac{s_{1}+1}{2}}(\partial_{i_{1}j_{1}}^{p_{1}-s_{1}}P^{2r-1})}
	- \sum_{[p_{1};s_{1}]_{1}}\frac{1}{N^{1+\theta(\mathcal{I}_{1})}}\sum_{\mathcal{I}_{1}}
	\expec{G_{i_{1}i_{1}}^{\frac{s_{1}+1}{2}}G_{j_{1}j_{1}}^{\frac{s_{1}-1}{2}}m(\partial_{i_{1}j_{1}}^{p_{1}-s_{1}}P^{2r-1})},
	\end{equation*}
	is a linear combination of the terms of the form \eqref{eq: form} (with bounded coefficients) where $s_{2}\equiv 1 (\Mod 2)$, $0\le s_{2}\le \ell$, $\mathcal{I}_{2}\in\mathcal{J}_{2}$ and the term $T$ has the form
	\begin{equation*}
	\frac{1}{N^{2+\theta(\mathcal{I}_{2})}}G_{i_{2}i_{2}}^{\frac{s_{2}+1}{2}}G_{j_{2}j_{2}}^{\frac{s_{2}+1}{2}}G_{i_{1}i_{1}}^{\frac{s_{1}+1}{2}}G_{j_{1}j_{1}}^{\frac{s_{1}+1}{2}-\indic(j_{2}\equiv j_{1})}m^{\indic(j_{2}\equiv j_{1})}
	\paren{\partial_{i_{2}j_{2}}^{p_{2}-s_{2}}\partial_{i_{1}j_{1}}^{p_{1}-s_{1}}P^{2r-1}}.
	\end{equation*}
	According to Claim \ref{claim: replacing single diagonal 2}, we can find that the length of the linear combination can be bounded by a large constant only depending on $\ell$.
	
	Next, in order to replace one more diagonal entry, we apply Lemma \ref{lem: replacing single diagoanl general version} to
	\begin{equation*}
	\sum_{[p_{1};s_{1}]_{1}}\frac{1}{N^{1+\theta(\mathcal{I}_{1})}}\sum_{\mathcal{I}_{1}}
	\expec{G_{i_{1}i_{1}}^{\frac{s_{1}+1}{2}}G_{j_{1}j_{1}}^{\frac{s_{1}-1}{2}}m(\partial_{i_{1}j_{1}}^{p_{1}-s_{1}}P^{2r-1})}.
	\end{equation*}
	We assume $\frac{s_{1}-1}{2}\ge 1$. (Otherwise we will use Lemma \ref{lem: replacing single diagoanl general version} to replace $G_{i_{1}i_{1}}$.) Setting $d=1$, $t=\frac{s_{1}-1}{2}$ and $G_{ii}=G_{j_{1}j_{1}}$ in Lemma \ref{lem: replacing single diagoanl general version}, we find that the difference,
	\begin{equation*}
	\sum_{[p_{1};s_{1}]_{1}}\frac{1}{N^{1+\theta(\mathcal{I}_{1})}}\sum_{\mathcal{I}_{1}}
	\expec{G_{i_{1}i_{1}}^{\frac{s_{1}+1}{2}}G_{j_{1}j_{1}}^{\frac{s_{1}-1}{2}}m(\partial_{i_{1}j_{1}}^{p_{1}-s_{1}}P^{2r-1})}
	- \sum_{[p_{1};s_{1}]_{1}}\frac{1}{N^{1+\theta(\mathcal{I}_{1})}}\sum_{\mathcal{I}_{1}}
	\expec{G_{i_{1}i_{1}}^{\frac{s_{1}+1}{2}}G_{j_{1}j_{1}}^{\frac{s_{1}-3}{2}}m(\partial_{i_{1}j_{1}}^{p_{1}-s_{1}}P^{2r-1})},
	\end{equation*}
	is, again, a linear combination of the terms of the form \eqref{eq: form} (with bounded coefficients) where $s_{2}\equiv 1 (\Mod 2)$, $0\le s_{2}\le \ell$, $\mathcal{I}_{2}\in\mathcal{J}_{2}$ and the term $T$ has the form
	\begin{equation*}
	\frac{1}{N^{2+\theta(\mathcal{I}_{2})}}G_{i_{2}i_{2}}^{\frac{s_{2}+1}{2}}G_{j_{2}j_{2}}^{\frac{s_{2}+1}{2}}G_{i_{1}i_{1}}^{\frac{s_{1}+1}{2}}G_{j_{1}j_{1}}^{\frac{s_{1}-1}{2}-\indic(j_{2}\equiv j_{1})}m^{1+\indic(j_{2}\equiv j_{1})}
	\paren{\partial_{i_{2}j_{2}}^{p_{2}-s_{2}}\partial_{i_{1}j_{1}}^{p_{1}-s_{1}}P^{2r-1}}.
	\end{equation*}
    The length of the linear combination also can be bounded by a large constant only depending on $\ell$.
    
    We can apply Lemma \ref{lem: replacing single diagoanl general version} this way repetitively until there is no diagonal entry to replace. Then, the desired result follows.
\end{proof}
\end{proposition}

\subsection{Step 4.~Iterations}
We shall use the approach of \cite{HK21}. In this subsection, we will show that the main term \eqref{eq: main terms alt} can be approximated by a linear combination of the terms which have a certain form. Let us set
\begin{align*}
T_{1}=T(s_{1},p_{1})\coloneqq \frac{1}{N^{1+\theta(\mathcal{I}_{1})}}G_{i_{1}i_{1}}^{\frac{s_{1}+1}{2}}G_{j_{1}j_{1}}^{\frac{s_{1}+1}{2}}(\partial_{i_{1}j_{1}}^{p_{1}-s_{1}}P^{2r-1}).
\end{align*}
According to Proposition \ref{prop: replacing all diagonals}, we obtain
\begin{equation*} 
\sum_{[p_{1};s_{1}]_{1}}\sum_{\mathcal{I}_{1}} \expec{ T_{1} }
= \sum_{[p_{1};s_{1}]_{1}}\frac{1}{N^{1+\theta(\mathcal{I}_{1})}}\sum_{\mathcal{I}_{1}} \expec{  m^{s_{1}+1}(\partial_{i_{1}j_{1}}^{p_{1}-s_{1}}P^{2r-1}) } + \sum_{t}c_{t}\sum_{[p_{2};s_{2}]_{2}}\sum_{[p_{1};s_{1}]_{1}}\sum_{\mathcal{I}_{2}} \expec{T_{2}^{(t)}} + \bigO_{\prec}(\Phi_{r}),
\end{equation*}
where $s_{2}\equiv 1 (\Mod 2)$, $0\le s_{2}\le \ell$, $\mathcal{I}_{2}\in\mathcal{J}_{2}$ (refer to \eqref{eq: def of J_k} for the definition of $\mathcal{J}_{k}$), the summation $\sum_{t}$ has a finite length which depends on $\ell$ but not $N$, the coefficient $c_{t}$ is bounded for every $t$, and $T_{2}^{(t)}$ in the summation has the following forms, for some $0\le \alpha_{1} \le\frac{s_{1}-1}{2}$,
\begin{align*}
\frac{1}{N^{2+\theta(\mathcal{I}_{2})}}G_{i_{2}i_{2}}^{\frac{s_{2}+1}{2}}G_{j_{2}j_{2}}^{\frac{s_{2}+1}{2}}G_{i_{1}i_{1}}^{\frac{s_{1}+1}{2}}G_{j_{1}j_{1}}^{\frac{s_{1}+1}{2}-\alpha_{1}-\indic(j_{2}\equiv j_{1})}m^{\alpha_{1}+\indic(j_{2}\equiv j_{1})}
\paren{\partial_{i_{2}j_{2}}^{p_{2}-s_{2}}\partial_{i_{1}j_{1}}^{p_{1}-s_{1}}P^{2r-1}},
\end{align*}
or
\begin{align*}
\frac{1}{N^{2+\theta(\mathcal{I}_{2})}}G_{i_{2}i_{2}}^{\frac{s_{2}+1}{2}}G_{j_{2}j_{2}}^{\frac{s_{2}+1}{2}}G_{i_{1}i_{1}}^{\frac{s_{1}+1}{2}-\alpha_{1}-\indic(i_{2}\equiv i_{1})}m^{\frac{s_{1}+1}{2}+\alpha_{1}+\indic(i_{2}\equiv i_{1})}
\paren{\partial_{i_{2}j_{2}}^{p_{2}-s_{2}}\partial_{i_{1}j_{1}}^{p_{1}-s_{1}}P^{2r-1}}.
\end{align*}
Note that
\begin{equation*}
\size{ \sum_{t}\sum_{[p_{2};s_{2}]_{2}}\sum_{[p_{1};s_{1}]_{1}}\sum_{\mathcal{I}_{2}} \expec{T_{2}^{(t)}} } \prec \frac{1}{q}\max_{s\ge 0}\expec{\paren{\frac{|\partial_{2}P|\Im m(\tz)}{N\eta}+\paren{\frac{\Im m(\tz)}{N\eta}}^{2}+\frac{1}{N}}^{s}|P|^{2r-s-1} },
\end{equation*}
because in the summation $\sum_{[p_{2};s_{2}]_{2}}$, we have the factor $q^{p_{2}-1}$ with $p_{2} > 1$, and also we can use \eqref{eq: derivative P}.

\begin{remark}
	We will denote by $c_{t}$ a coefficient of a linear combination. The coefficient $c_{t}$ may differ whenever it occurs. It is an abuse of notation for convenience.
\end{remark}

Pick a single $T_{2}^{(t)}$ and drop the the superscript $^{(t)}$ for brevity. Following the proof of Proposition \ref{prop: replacing all diagonals}, we replace all diagonal entries in $T_{2}$ using Lemma \ref{lem: replacing single diagoanl general version} and keep track of remaining terms. It follows that
\begin{multline*}
\sum_{[p_{2};s_{2}]_{2}}\sum_{[p_{1};s_{1}]_{1}}\sum_{\mathcal{I}_{2}} \expec{T_{2}} \\
= \sum_{[p_{2};s_{2}]_{2}}\sum_{[p_{1};s_{1}]_{1}}\frac{1}{N^{2+\theta(\mathcal{I}_{2})}}\sum_{\mathcal{I}_{2}} \expec{ m^{s_{1}+s_{2}+2}(\partial_{i_{2}j_{2}}^{p_{2}-s_{2}}\partial_{i_{1}j_{1}}^{p_{1}-s_{1}}P^{2r-1}) } \\
+ \sum_{t}c_{t}\sum_{[p_{3};s_{3}]_{3}}\sum_{[p_{2};s_{2}]_{2}}\sum_{[p_{1};s_{1}]_{1}}\sum_{\mathcal{I}_{3}}\expec{T_{3}^{(t)}} + \bigO_{\prec}(\Phi_{r}),
\end{multline*}
where $s_{3}\equiv 1 (\Mod 2)$, $0\le s_{3}\le \ell$, $\mathcal{I}_{3}\in\mathcal{J}_{3}$, the summation $\sum_{t}$ has a finite length which depends on $\ell$ but not $N$, the coefficient $c_{t}$ is bounded for every $t$, and $T_{3}^{(t)}$ in the summation has the following forms, for some $0\le \alpha_{1} \le\frac{s_{1}-1}{2}$ and $0\le \alpha_{2} \le\frac{s_{2}-1}{2}$, 
\begin{multline*}
\frac{1}{N^{3+\theta(\mathcal{I}_{3})}}G_{i_{3}i_{3}}^{\frac{s_{3}+1}{2}}G_{j_{3}j_{3}}^{\frac{s_{3}+1}{2}}
G_{i_{2}i_{2}}^{\frac{s_{2}+1}{2}}G_{j_{2}j_{2}}^{\frac{s_{2}+1}{2}}
G_{i_{1}i_{1}}^{\frac{s_{1}+1}{2}}G_{j_{1}j_{1}}^{\frac{s_{1}+1}{2}-\alpha_{1}-\indic(j_{3}\equiv j_{1})} m^{\alpha_{1}+\indic(j_{3}\equiv j_{1})}\paren{\partial_{i_{3}j_{3}}^{p_{3}-s_{3}}\partial_{i_{2}j_{2}}^{p_{2}-s_{2}}\partial_{i_{1}j_{1}}^{p_{1}-s_{1}}P^{2r-1}},
\end{multline*}
\begin{multline*}
\frac{1}{N^{3+\theta(\mathcal{I}_{3})}}G_{i_{3}i_{3}}^{\frac{s_{3}+1}{2}}G_{j_{3}j_{3}}^{\frac{s_{3}+1}{2}}
G_{i_{2}i_{2}}^{\frac{s_{2}+1}{2}}G_{j_{2}j_{2}}^{\frac{s_{2}+1}{2}}
G_{i_{1}i_{1}}^{\frac{s_{1}+1}{2}-\alpha_{1}-\indic(i_{3}\equiv i_{1})} m^{\frac{s_{1}+1}{2}+\alpha_{1}+\indic(i_{3}\equiv i_{1})}\paren{\partial_{i_{3}j_{3}}^{p_{3}-s_{3}}\partial_{i_{2}j_{2}}^{p_{2}-s_{2}}\partial_{i_{1}j_{1}}^{p_{1}-s_{1}}P^{2r-1}},
\end{multline*}
\begin{multline*}
\frac{1}{N^{3+\theta(\mathcal{I}_{3})}}G_{i_{3}i_{3}}^{\frac{s_{3}+1}{2}}G_{j_{3}j_{3}}^{\frac{s_{3}+1}{2}}
G_{i_{2}i_{2}}^{\frac{s_{2}+1}{2}}G_{j_{2}j_{2}}^{\frac{s_{2}+1}{2}-\alpha_{2}-\indic(j_{3}\equiv j_{2})} m^{(s_{1}+1)+\alpha_{2}+\indic(j_{3}\equiv j_{2})}\paren{\partial_{i_{3}j_{3}}^{p_{3}-s_{3}}\partial_{i_{2}j_{2}}^{p_{2}-s_{2}}\partial_{i_{1}j_{1}}^{p_{1}-s_{1}}P^{2r-1}},
\end{multline*}
or
\begin{multline*}
\frac{1}{N^{3+\theta(\mathcal{I}_{3})}}G_{i_{3}i_{3}}^{\frac{s_{3}+1}{2}}G_{j_{3}j_{3}}^{\frac{s_{3}+1}{2}}
G_{i_{2}i_{2}}^{\frac{s_{2}+1}{2}-\alpha_{2}-\indic(i_{3}\equiv i_{2})} m^{\frac{s_{2}+1}{2}+(s_{1}+1)+\alpha_{2}+\indic(i_{3}\equiv i_{2})}\paren{\partial_{i_{3}j_{3}}^{p_{3}-s_{3}}\partial_{i_{2}j_{2}}^{p_{2}-s_{2}}\partial_{i_{1}j_{1}}^{p_{1}-s_{1}}P^{2r-1}}.
\end{multline*}
Note that
\begin{equation*}
\size{ \sum_{t}\sum_{[p_{3};s_{3}]_{2}}\sum_{[p_{2};s_{2}]_{2}}\sum_{[p_{1};s_{1}]_{1}}\sum_{\mathcal{I}_{2}}\sum_{i_{3},j_{3}}\expec{T_{3}^{(t)}} } \prec \frac{1}{q^{2}}\max_{s\ge 0}\expec{\paren{\frac{|\partial_{2}P|\Im m(\tz)}{N\eta}+\paren{\frac{\Im m(\tz)}{N\eta}}^{2}+\frac{1}{N}}^{s}|P|^{2r-s-1} },
\end{equation*}
where we use \eqref{eq: derivative P} and the fact that in the summation $\sum_{[p_{l};s_{l}]_{l}}$, if $l\ge 2$, we have the factor $q^{p_{l}-1}$ with $p_{l} > 1$.

We shall iterate this procedure until the remaining terms are absorbed into $\bigO_{\prec}(\Phi_{r})$ by getting many factors of $q^{-1}$. We assume that $\mathcal{I}_{k}\in\mathcal{J}_{k}$ and $T_{k}$ has the following forms, for $1\le l\le k-1$ and $0\le \alpha_{l} \le\frac{s_{l}-1}{2}$,
\begin{multline}\label{eq: g1}
\frac{1}{N^{k+\theta(\mathcal{I}_{k})}}G_{i_{k}i_{k}}^{\frac{s_{k}+1}{2}}G_{j_{k}j_{k}}^{\frac{s_{k}+1}{2}}\ldots
G_{i_{l}i_{l}}^{\frac{s_{l}+1}{2}}G_{j_{l}j_{l}}^{\frac{s_{l}+1}{2}-\alpha_{l}-\indic(j_{k}\equiv j_{l})}
m^{\sum_{l'=1}^{l-1}(s_{l'+1})+\alpha_{l}+\indic(j_{k}\equiv j_{l})}\paren{\partial_{i_{k}j_{k}}^{p_{k}-s_{k}}\ldots\partial_{i_{1}j_{1}}^{p_{1}-s_{1}}P^{2r-1}},
\end{multline}
or
\begin{multline}\label{eq: g2}
\frac{1}{N^{k+\theta(\mathcal{I}_{k})}}G_{i_{k}i_{k}}^{\frac{s_{k}+1}{2}}G_{j_{k}j_{k}}^{\frac{s_{k}+1}{2}}\ldots
G_{i_{l}i_{l}}^{\frac{s_{l}+1}{2}-\alpha_{l}-\indic(i_{k}\equiv i_{l})}
m^{\sum_{l'=1}^{l-1}(s_{l'+1})+\frac{s_{l}+1}{2}+\alpha_{l}+\indic(i_{k}\equiv i_{l})}\paren{\partial_{i_{k}j_{k}}^{p_{k}-s_{k}}\ldots\partial_{i_{1}j_{1}}^{p_{1}-s_{1}}P^{2r-1}},
\end{multline}
where $s_{1},\cdots,s_{k}$ are positive odd integers such that $s_{l}\le \ell$ for every $1\le l\le k$.
Them, we have the following claim.
\begin{claim}\label{claim: iteration}
	Let $k$ be a positive integer. Let $s_{1},\cdots,s_{k}$ be positive odd integers such that $s_{l}\le \ell$ for every $1\le l\le k$. Assume $T_{k}$ has the form \eqref{eq: g1} or \eqref{eq: g2} for $1\le l\le k-1$ and $0\le \alpha_{l} \le\frac{s_{l}-1}{2}$. Consider $\mathcal{I}_{k}\in\mathcal{J}_{k}$ (refer to \eqref{eq: def of J_k} for the definition of $\mathcal{J}_{k}$). Then, it follows that
	\begin{multline}\label{eq: est}
	\sum_{[p_{k};s_{k}]_{k}}\ldots \sum_{[p_{1};s_{1}]_{1}} \sum_{\mathcal{I}_{k}} \expec{ T_{k} } \\
	= \sum_{[p_{k};s_{k}]_{k}}\ldots \sum_{[p_{1};s_{1}]_{1}} \frac{1}{N^{k+\theta(\mathcal{I}_{k})}} \sum_{\mathcal{I}_{k}} \expec{ m^{s_{1}+\ldots+s_{k}+k}\paren{\partial_{i_{k}j_{k}}^{p_{k}-s_{k}}\ldots\partial_{i_{1}j_{1}}^{p_{1}-s_{1}}P^{2r-1}} } \\
	+ \sum_{t} \sum_{[p_{k+1};s_{k+1}]_{k+1}}\sum_{[p_{k};s_{k}]_{k}}\ldots\sum_{[p_{1};s_{1}]_{1}}\sum_{\mathcal{I}_{k+1}}\expec{T_{k+1}^{(t)}} + \bigO_{\prec}(\Phi_{r}),
	\end{multline}
	where $s_{k+1}\equiv 1 (\Mod 2)$, $0\le s_{k+1}\le \ell$, $\mathcal{I}_{k+1}\in\mathcal{J}_{k+1}$, the summation $\sum_{t}$ has a finite length which depends on $\ell$ but not $N$, the coefficient $c_{t}$ is bounded for every $t$, and $T_{k+1}^{(t)}$ has the form \eqref{eq: g1} or \eqref{eq: g2} (replacing $k$ with $k+1$) for $1\le l\le k$ and $0\le \alpha_{l} \le\frac{s_{l}-1}{2}$. (Refer to \eqref{eq: def of [p;s]} for the definition of $\sum_{[p;s]_{k}}$.)
	
	In addition, we have
	\begin{multline}\label{eq: est2}
	\size{ \sum_{t} \sum_{[p_{k+1};s_{k+1}]_{k+1}}\sum_{[p_{k};s_{k}]_{k}}\ldots\sum_{[p_{1};s_{1}]_{1}}\sum_{\mathcal{I}_{k+1}}\expec{T_{k+1}^{(t)}} } \\
	\prec \frac{1}{q^{k}} \max_{s\ge 0}\expec{\paren{\frac{|\partial_{2}P|\Im m(\tz)}{N\eta}+\paren{\frac{\Im m(\tz)}{N\eta}}^{2}+\frac{1}{N}}^{s}|P|^{2r-s-1} }.
	\end{multline}
\begin{proof}
	As in Proposition \ref{prop: replacing all diagonals}, the estimate \eqref{eq: est} follows from Lemma \ref{lem: replacing single diagoanl general version}. If $l\ge 2$, we can get the factor $q^{p_{l}-1}$ with $p_{l} > 1$ in the summation $\sum_{[p_{l};s_{l}]_{l}}$, which implies the other estimate \eqref{eq: est2} due to \eqref{eq: derivative P}.
\end{proof}
\end{claim}

Now we are ready to show that the main term \eqref{eq: main terms alt} can be written as a linear combination.

\begin{proposition}\label{prop: moment final}
	Suppose Assumption \ref{assump: random correction term 1} and Assumption \ref{assump: random correction term 2} hold. Consider the term \eqref{eq: main terms alt}:
	\begin{align*}
	\sum_{p=1}^{\ell}\frac{\mathcal{C}_{p+1}}{N^{2}q^{p-1}}\sum_{i,j}\expec{\deri^{p}(G_{ij}P^{2r-1})}.
	\end{align*}
	If $\ell$ is large enough, then the term \eqref{eq: main terms alt} is a linear combination of the terms of the following form (with bounded coefficients):
	\begin{equation*}
	\sum_{[p_{k};s_{k}]_{k}}\ldots \sum_{[p_{1};s_{1}]_{1}} \sum_{\mathcal{I}_{k}} \expec{M} + \bigO_{\prec}(\Phi_{r}), \quad 1\le k\le \ell,
	\end{equation*}
	where for $1\le l\le k$, each $s_{l}$ is a positive odd integer such that $s_{l}\le\ell$, we have $\mathcal{I}_{k}\in\mathcal{J}_{k}$ (refer to \eqref{eq: def of J_k} for the definition of $\mathcal{J}_{k}$), and $M$ has the form
	\begin{equation*}
	M = \frac{1}{N^{k+\theta(\mathcal{I}_{k})}} m^{s_{1}+\cdots+s_{k}+k}\paren{\partial_{i_{k}j_{k}}^{p_{k}-s_{k}}\ldots\partial_{i_{1}j_{1}}^{p_{1}-s_{1}}P^{2r-1}}.
	\end{equation*}
	(Refer to \eqref{eq: def of [p;s]} for the definition of $\sum_{[p;s]_{k}}$.)\\
	In fact, the length of the linear combination can be bounded by a large constant only depending on $\ell$ but not $N$.
\begin{proof}
	By Proposition \ref{prop: moment estimates}, the term \eqref{eq: main terms alt} is a linear combination of the terms of the following form:
	\begin{equation*}
	\sum_{[p_{1};s_{1}]_{1}}\sum_{\mathcal{I}_{1}} \expec{ T_{1} } + \bigO_{\prec}(\Phi_{r}),
	\end{equation*}
	where $s_{1}\equiv 1 (\Mod 2)$, $0\le s_{1}\le \ell$, $\mathcal{I}_{1}\in\mathcal{J}_{1}$ and $T_{1}$ has the form
	\begin{equation*}
	T_{1} = \frac{1}{N^{1+\theta(\mathcal{I}_{1})}}G_{i_{1}i_{1}}^{\frac{s_{1}+1}{2}}G_{j_{1}j_{1}}^{\frac{s_{1}+1}{2}}(\partial_{i_{1}j_{1}}^{p_{1}-s_{1}}P^{2r-1}).
	\end{equation*}
	The length of the linear combination can be bounded by $\ell$.
	
	Applying Claim \ref{claim: iteration}, we replace all diagonal entries in $T_{1}$ and observe that the difference,
	\begin{equation*}
	\sum_{[p_{1};s_{1}]_{1}}\sum_{\mathcal{I}_{1}} \expec{ T_{1} } - \sum_{[p_{1};s_{1}]_{1}}\frac{1}{N^{1+\theta(\mathcal{I}_{1})}}\sum_{\mathcal{I}_{1}}
	\expec{m^{s_{1}+1}(\partial_{i_{1}j_{1}}^{p_{1}-s_{1}}P^{2r-1})},
	\end{equation*}
	is a linear combination of the terms of the form
	\begin{equation}
	\sum_{[p_{2};s_{2}]_{2}}\sum_{[p_{1};s_{1}]_{1}}\sum_{\mathcal{I}_{2}} \expec{T_{2}} + \bigO_{\prec}(\Phi_{r}),
	\end{equation}
	where $s_{2}\equiv 1 (\Mod 2)$, $0\le s_{2}\le \ell$, $\mathcal{I}_{2}\in\mathcal{J}_{2}$, and $T_{2}$ has the form \eqref{eq: g1} or \eqref{eq: g2} with $k=2$. One important thing is that
	\begin{equation*}
	\size{ \sum_{[p_{2};s_{2}]_{2}}\sum_{[p_{1};s_{1}]_{1}}\sum_{\mathcal{I}_{2}} \expec{T_{2}}  } \prec \frac{1}{q} \max_{s\ge 0}\expec{\paren{\frac{|\partial_{2}P|\Im m(\tz)}{N\eta}+\paren{\frac{\Im m(\tz)}{N\eta}}^{2}+\frac{1}{N}}^{s}|P|^{2r-s-1} }.
	\end{equation*}
	
	Using Claim \ref{claim: iteration} again, we replace all diagonal entries in $T_{2}$ and observe that the difference,
	\begin{equation*}
	\sum_{[p_{2};s_{2}]_{2}}\sum_{[p_{1};s_{1}]_{1}}\sum_{\mathcal{I}_{2}} \expec{T_{2}} - \sum_{[p_{2};s_{2}]_{2}}\sum_{[p_{1};s_{1}]_{1}}\frac{1}{N^{2+\theta(\mathcal{I}_{2})}}\sum_{\mathcal{I}_{1}}
	\expec{m^{s_{1}+s_{2}+2}(\partial_{i_{2}j_{2}}^{p_{2}-s_{2}}\partial_{i_{1}j_{1}}^{p_{1}-s_{1}}P^{2r-1})},
	\end{equation*}
	is a linear combination of the terms of the form
	\begin{equation}
	\sum_{[p_{3};s_{3}]_{3}}\sum_{[p_{2};s_{2}]_{2}}\sum_{[p_{1};s_{1}]_{1}}\sum_{\mathcal{I}_{3}} \expec{T_{3}} + \bigO_{\prec}(\Phi_{r}),
	\end{equation}
	where $s_{3}\equiv 1 (\Mod 2)$, $0\le s_{3}\le \ell$, $\mathcal{I}_{3}\in\mathcal{J}_{3}$, and $T_{3}$ has the form \eqref{eq: g1} or \eqref{eq: g2} with $k=3$. Note that
	\begin{equation*}
	\size{ \sum_{[p_{3};s_{3}]_{3}}\sum_{[p_{2};s_{2}]_{2}}\sum_{[p_{1};s_{1}]_{1}}\sum_{\mathcal{I}_{3}} \expec{T_{3}} } \prec \frac{1}{q^{2}} \max_{s\ge 0}\expec{\paren{\frac{|\partial_{2}P|\Im m(\tz)}{N\eta}+\paren{\frac{\Im m(\tz)}{N\eta}}^{2}+\frac{1}{N}}^{s}|P|^{2r-s-1} }.
	\end{equation*} 
	
	In this way, we use Claim \ref{claim: iteration} to replace all diagonal entries in $T_{k}$ and show that the difference,
	\begin{equation*}
	\sum_{[p_{k};s_{k}]_{k}}\ldots \sum_{[p_{1};s_{1}]_{1}} \sum_{\mathcal{I}_{k}} \expec{ T_{k} }
	- \sum_{[p_{k};s_{k}]_{k}}\ldots \sum_{[p_{1};s_{1}]_{1}} \frac{1}{N^{k+\theta(\mathcal{I}_{k})}} \sum_{\mathcal{I}_{k}} \expec{ m^{s_{1}+\ldots+s_{k}+k}\paren{\partial_{i_{k}j_{k}}^{p_{k}-s_{k}}\ldots\partial_{i_{1}j_{1}}^{p_{1}-s_{1}}P^{2r-1}} },
	\end{equation*}
	is a linear combination of the terms of the form
	\begin{equation}
	\sum_{[p_{k+1};s_{k+1}]_{k+1}}\ldots \sum_{[p_{1};s_{1}]_{1}}\sum_{\mathcal{I}_{k+1}} \expec{T_{k+1}} + \bigO_{\prec}(\Phi_{r}),
	\end{equation}
	where $s_{k+1}\equiv 1 (\Mod 2)$, $0\le s_{k+1}\le \ell$, $\mathcal{I}_{k+1}\in\mathcal{J}_{k+1}$, and $T_{k+1}$ has the form \eqref{eq: g1} or \eqref{eq: g2} (replacing $k$ with $k+1$).
	
	After $\ell$-iterations of this argument using Claim \ref{claim: iteration}, the term \eqref{eq: main terms alt} can be approximated by a linear combination of terms of the form
	\begin{equation*}
	\sum_{[p_{k};s_{k}]_{k}}\ldots \sum_{[p_{1};s_{1}]_{1}} \frac{1}{N^{k+\theta(\mathcal{I}_{k})}} \sum_{\mathcal{I}_{k}} \expec{ m^{s_{1}+\ldots+s_{k}+k}\paren{\partial_{i_{k}j_{k}}^{p_{k}-s_{k}}\ldots\partial_{i_{1}j_{1}}^{p_{1}-s_{1}}P^{2r-1}} },\quad 1\le k\le \ell,
	\end{equation*}
	and the error is also a linear combination of terms of the form
	\begin{equation*}
	\sum_{[p_{\ell+1};s_{\ell+1}]_{\ell+1}}\ldots \sum_{[p_{1};s_{1}]_{1}}\sum_{\mathcal{I}_{\ell+1}} \expec{T_{\ell+1}} + \bigO_{\prec}(\Phi_{r}),
	\end{equation*}
	where $s_{\ell+1}\equiv 1 (\Mod 2)$, $0\le s_{\ell+1}\le \ell$, $\mathcal{I}_{\ell+1}\in\mathcal{J}_{\ell+1}$, and $T_{\ell+1}$ has the form \eqref{eq: g1} or \eqref{eq: g2} with $k=\ell+1$.
	Since $\ell$ is large enough so that we have
	\begin{equation*}
	\size{\sum_{[p_{\ell+1};s_{\ell+1}]_{\ell+1}}\ldots \sum_{[p_{1};s_{1}]_{1}}\sum_{\mathcal{I}_{\ell+1}}} \prec \frac{1}{q^{\ell}} \max_{s\ge 0}\expec{\paren{\frac{|\partial_{2}P|\Im m(\tz)}{N\eta}+\paren{\frac{\Im m(\tz)}{N\eta}}^{2}+\frac{1}{N}}^{s}|P|^{2r-s-1} } = \bigO_{\prec}(\Phi_{r}),
	\end{equation*}
	the desired result follows.
\end{proof}
\end{proposition}

\subsection{Step 5.~Random correction terms}\label{subsec: step5}

In this subsection, our first goal is to show
\begin{multline}\label{eq: correction term mathing}
\sum_{[p_{k};s_{k}]_{k}}\ldots \sum_{[p_{1};s_{1}]_{1}} \frac{1}{N^{k+\theta(\mathcal{I}_{k})}} \sum_{\mathcal{I}_{k}} \expec{ m^{s_{1}+\cdots+s_{k}+k}\paren{\partial_{i_{k}j_{k}}^{p_{k}-s_{k}}\ldots\partial_{i_{1}j_{1}}^{p_{1}-s_{1}}P^{2r-1}} } \\
= \expec{\frac{1}{N^{\theta(\mathcal{I}_{k})}}
	\paren{\sum_{\mathcal{I}_{k}}\prod_{l=1}^{k}A_{l}(s_{l})} m^{s_{1}+\cdots+s_{k}+k} P^{2r-1} } + \bigO_{\prec}(\Phi_{r})
\end{multline}
where $s_{1},s_{2},\cdots, s_{k}$ are positive odd integers, 
$\mathcal{I}_{k}\in\mathcal{J}_{k}$, and the term $A_{l}(s_{l})$ is defined through
\begin{align}\label{eq: building block}
A_{l}(s_{l}) \coloneqq \begin{cases}
h_{i_{l}j_{l}}^{2} - \expec{h_{i_{l}j_{l}}^{2}} & \text{$l>1$ and $s_{l}=1$}, \\
h_{i_{l}j_{l}}^{s_{l}+1} & \text{otherwise}.
\end{cases}
\end{align}
Then, the term
\begin{equation*}
\frac{1}{N^{\theta(\mathcal{I}_{k})}}
\paren{\sum_{\mathcal{I}_{k}}\prod_{l=1}^{k}A_{l}(s_{l})},
\end{equation*}
will be a building block of the random correction term $\mathcal{Z}_{n}$ (as in Theorem \ref{thm: local law}) when $s_{1}+\cdots+s_{k}+k= 2n$.

Firstly, we consider the case $k=1$.

\begin{claim}\label{claim: correction term mathing 1}
	Let $s_{1}$ be a positive odd integer. Consider $\mathcal{I}_{1}\in\mathcal{J}_{1}$ (refer to \eqref{eq: def of J_k} for the definition of $\mathcal{J}_{k}$). Then, we have
	\begin{align*}
	\sum_{[p_{1};s_{1}]_{1}} \frac{1}{N^{1+\theta(\mathcal{I}_{1})}} \sum_{\mathcal{I}_{1}}\expec{m^{s_{1}+1}(\partial_{i_{1}j_{1}}^{p_{1}-s_{1}}P^{2r-1})} 
	= \expec{ \frac{1}{N^{\theta(\mathcal{I}_{1})}}\paren{\sum_{\mathcal{I}_{1}}h_{i_{1}j_{1}}^{s_{1}+1}}m^{s_{1}+1}P^{2r-1} } + \bigO_{\prec}\paren{ \Phi_{r} }.
	\end{align*}
	(Refer to \eqref{eq: def of [p;s]} for the definition of $\sum_{[p;s]_{k}}$.)
\begin{proof}
	Consider the first term of the right-hand side
	\begin{equation*}
	\frac{1}{N^{\theta(\mathcal{I}_{1})}}\sum_{\mathcal{I}_{1}}\expec{ h_{i_{1}j_{1}}^{s_{1}+1}m^{s_{1}+1}P^{2r-1} }.
	\end{equation*}
	Referring to \eqref{eq: def of theta} (for the definition of $\theta(\mathcal{I}_{k})$), we note that $\theta(\mathcal{I}_{1})=1$. Using the cumulant expansion, we have
	\begin{equation*}
	\frac{1}{N}\sum_{i_{1}\neq j_{1}}\sum_{p_{1} = 1}^{\ell}\frac{\mathcal{C}_{p_{1}+1}}{Nq^{p_{1}-1}}\expec{ \partial_{i_{1}j_{1}}^{p_{1}}(h_{i_{1}j_{1}}^{s_{1}}m^{s_{1}+1}P^{2r-1}) } + \bigO_{\prec}\paren{ \Phi_{r} }.
	\end{equation*}
	One remark here is that, if $\partial_{i_{1}j_{1}}$ does not hit $h_{i_{1}j_{1}}$ exactly $s_{1}$ times, then we can apply the cumulant expansion again with a remaining $h_{i_{1}j_{1}}$ so that we get an additional factor $1/N$ and the resulting terms are bounded by $\bigO_{\prec}\paren{ \Phi_{r} }$. For example, we can think of the following case:
	\begin{multline*}
	\frac{1}{N}\sum_{i_{1}\neq j_{1}}\sum_{p_{1} = 1}^{\ell}\frac{\mathcal{C}_{p_{1}+1}}{Nq^{p_{1}-1}}\expec{ h_{i_{1}j_{1}}^{s_{1}}\partial_{i_{1}j_{1}}^{p_{1}}(m^{s_{1}+1}P^{2r-1}) } \\
	= \frac{1}{N}\sum_{i_{1}\neq j_{1}}\sum_{p_{1} = 1}^{\ell}\frac{\mathcal{C}_{p_{1}+1}}{Nq^{p_{1}-1}}\sum_{p'=1}^{\ell}\frac{\mathcal{C}_{p'+1}}{Nq^{p'-1}}\expec{ \partial_{i_{1}j_{1}}^{p'}\big(h_{i_{1}j_{1}}^{s_{1}-1}\partial_{i_{1}j_{1}}^{p_{1}}(m^{s_{1}+1}P^{2r-1})\big) } + \bigO_{\prec}\paren{ \Phi_{r} } = \bigO_{\prec}\paren{ \Phi_{r} }.
	\end{multline*}
	Thus, in order to get a non-negligible contribution, the derivative $\partial_{i_{1}j_{1}}$ must hit $h_{i_{1}j_{1}}$ exactly $s_{1}$ times so we have
	\begin{equation*}
	\frac{1}{N}\sum_{i_{1}\neq j_{1}}\expec{ h_{i_{1}j_{1}}^{s_{1}+1}m^{s_{1}+1}P^{2r-1} } = \sum_{p_{1} = s_{1}}^{\ell}{p_{1} \choose s_{1}}\frac{(s_{1}!)\mathcal{C}_{p_{1}+1}}{N^{2}q^{p_{1}-1}}\sum_{i_{1}\neq j_{1}}\expec{ \partial_{i_{1}j_{1}}^{p_{1}-s_{1}}(m^{s_{1}+1}P^{2r-1}) } + \bigO_{\prec}\paren{ \Phi_{r} }.
	\end{equation*}
	If $\deri$ hits the normalized trace $m$ at least once, then the resulting terms are absorbed into $\bigO_{\prec}\paren{ \Phi_{r} }$ because of \eqref{eq: derivative m}. As a result, we get
	\begin{multline*}
	\sum_{p_{1} = s_{1}}^{\ell}{p_{1} \choose s_{1}}\frac{(s_{1}!)\mathcal{C}_{p_{1}+1}}{N^{2}q^{p_{1}-1}}\sum_{i_{1}\neq j_{1}}\expec{ \partial_{i_{1}j_{1}}^{p_{1}-s_{1}}(m^{s_{1}+1}P^{2r-1}) } \\
	= \sum_{p_{1} = s_{1}}^{\ell}{p_{1} \choose s_{1}}\frac{(s_{1}!)\mathcal{C}_{p_{1}+1}}{N^{2}q^{p_{1}-1}}\sum_{i_{1}\neq j_{1}}\expec{ m^{s_{1}+1}\partial_{i_{1}j_{1}}^{p_{1}-s_{1}}(P^{2r-1}) } + \bigO_{\prec}\paren{ \Phi_{r} } \\
	= \sum_{[p_{1};s_{1}]_{1}} \frac{1}{N^{1+\theta(\mathcal{I}_{1})}} \sum_{\mathcal{I}_{1}}\expec{m^{s_{1}+1}(\partial_{i_{1}j_{1}}^{p_{1}-s_{1}}P^{2r-1})} + \bigO_{\prec}\paren{ \Phi_{r} },
	\end{multline*}
	and obtain the claim. (Refer to \eqref{eq: def of [p;s]} for the definition of $\sum_{[p;s]_{k}}$.)
\end{proof}
\end{claim}

Next, we consider the case $k=2$.
\begin{claim}\label{claim: correction term mathing 2}
	Let $s_{1}$ and $s_{2}$ be positive odd integers. Consider $\mathcal{I}_{2}\in\mathcal{J}_{2}$ (refer to \eqref{eq: def of J_k} for the definition of $\mathcal{J}_{k}$). Let $A_{l}$ be as in \eqref{eq: building block} for each $l=1,2$. Then, we have
	\begin{multline*}
	\sum_{[p_{2};s_{2}]_{2}}\sum_{[p_{1};s_{1}]_{1}} \frac{1}{N^{2+\theta(\mathcal{I}_{2})}} \sum_{\mathcal{I}_{2}}\expec{m^{s_{1}+s_{2}+2}(\partial_{i_{2}j_{2}}^{p_{2}-s_{2}}\partial_{i_{1}j_{1}}^{p_{1}-s_{1}}P^{2r-1})} \\
	= \expec{\frac{1}{N^{\theta(\mathcal{I}_{2})}}
		\paren{\sum_{\mathcal{I}_{2}}A_{1}(s_{1})A_{2}(s_{2})} m^{s_{1}+s_{2}+2}P^{2r-1} } + \bigO_{\prec}\paren{ \Phi_{r} }.
	\end{multline*}
	(Refer to \eqref{eq: def of [p;s]} for the definition of $\sum_{[p;s]_{k}}$.)
\begin{proof}
	If $s_{2}=1$, the first term of the right-hand side is,
	\begin{equation*}
        \frac{1}{N^{\theta(\mathcal{I}_{2})}}
		\sum_{\mathcal{I}_{2}} \expec{ h_{i_{1}j_{1}}^{s_{1}+1} (h_{i_{2}j_{2}}^{2}-\expec{h_{i_{2}j_{2}}^{2}}) m^{s_{1}+s_{2}+2}P^{2r-1} }.
	\end{equation*}
	Using the cumulant expansion with $h_{i_{1}j_{1}}$ and following the proof of Claim \ref{claim: correction term mathing 1}, we have
	\begin{multline*}
	\frac{1}{N^{\theta(\mathcal{I}_{2})}}
	\sum_{\mathcal{I}_{2}} \expec{ h_{i_{1}j_{1}}^{s_{1}+1} (h_{i_{2}j_{2}}^{2}-\expec{h_{i_{2}j_{2}}^{2}}) m^{s_{1}+s_{2}+2}P^{2r-1} } \\
	= \sum_{[p_{1};s_{1}]_{1}}\frac{1}{N^{1+\theta(\mathcal{I}_{2})}}\sum_{\mathcal{I}_{2}}\expec{   (h_{i_{2}j_{2}}^{2}-\expec{h_{i_{2}j_{2}}^{2}}) m^{s_{1}+s_{2}+2} ( \partial_{i_{1}j_{1}}^{p_{1}-s_{1}} P^{2r-1} ) } + \bigO_{\prec}\paren{ \Phi_{r} },
	\end{multline*}
	where we use that $\partial_{i_{1}j_{1}}(h_{i_{2}j_{2}})=0$ on the summation over distinct indexes $\sum_{\mathcal{I}_{2}}$ (refer to \eqref{eq: sum notation} for the definition of $\sum_{\mathcal{I}_{k}}$).
	
	Applying the cumulant expansion with $h_{i_{2}j_{2}}$, we get
	\begin{multline*}
	\sum_{[p_{1};s_{1}]_{1}}\frac{1}{N^{1+\theta(\mathcal{I}_{2})}}\sum_{\mathcal{I}_{2}}\expec{   (h_{i_{2}j_{2}}^{2}-\expec{h_{i_{2}j_{2}}^{2}}) m^{s_{1}+s_{2}+2} ( \partial_{i_{1}j_{1}}^{p_{1}-s_{1}} P^{2r-1} ) } \\
	= \sum_{[p_{1};s_{1}]_{1}}\frac{1}{N^{1+\theta(\mathcal{I}_{2})}}\sum_{\mathcal{I}_{2}}\sum_{p_{2}=1}^{\ell}\frac{\mathcal{C}_{p_{2}+1}}{Nq^{p_{2}-1}} \expec{ \partial_{i_{2}j_{2}}^{p_{2}} \big( h_{i_{2}j_{2}} m^{s_{1}+s_{2}+2} ( \partial_{i_{1}j_{1}}^{p_{1}-s_{1}} P^{2r-1} ) \big) } \\
	- \sum_{[p_{1};s_{1}]_{1}}\frac{1}{N^{1+\theta(\mathcal{I}_{2})}}\sum_{\mathcal{I}_{2}}\expec{   \expec{h_{i_{2}j_{2}}^{2}} m^{s_{1}+s_{2}+2} ( \partial_{i_{1}j_{1}}^{p_{1}-s_{1}} P^{2r-1} ) } + \bigO_{\prec}\paren{ \Phi_{r} }.
	\end{multline*}
	The derivative $\partial_{i_{2}j_{2}}$ have to hit $h_{i_{2}j_{2}}$ otherwise the resulting terms are absorbed into $\bigO_{\prec}\paren{ \Phi_{r} }$, which implies
	\begin{multline*}
	\sum_{[p_{1};s_{1}]_{1}}\frac{1}{N^{1+\theta(\mathcal{I}_{2})}}\sum_{\mathcal{I}_{2}}\sum_{p_{2}=1}^{\ell}\frac{\mathcal{C}_{p_{2}+1}}{Nq^{p_{2}-1}} \expec{ \partial_{i_{2}j_{2}}^{p_{2}} \big( h_{i_{2}j_{2}} m^{s_{1}+s_{2}+2} ( \partial_{i_{1}j_{1}}^{p_{1}-s_{1}} P^{2r-1} ) \big) } \\
	= \sum_{[p_{1};s_{1}]_{1}}\frac{1}{N^{1+\theta(\mathcal{I}_{2})}}\sum_{\mathcal{I}_{2}}\sum_{p_{2}=1}^{\ell}{p_{2} \choose 1}\frac{\mathcal{C}_{p_{2}+1}}{Nq^{p_{2}-1}} \expec{ \partial_{i_{2}j_{2}}^{p_{2}-1} \big( m^{s_{1}+s_{2}+2} ( \partial_{i_{1}j_{1}}^{p_{1}-s_{1}} P^{2r-1} ) \big) } + \bigO_{\prec}\paren{ \Phi_{r} }.
	\end{multline*}
	We find the cancellation for $p_{2}=1$ as follows:
	\begin{multline*}
	\sum_{[p_{1};s_{1}]_{1}}\frac{1}{N^{1+\theta(\mathcal{I}_{2})}}\sum_{\mathcal{I}_{2}}\sum_{p_{2}=1}^{\ell}{p_{2} \choose 1}\frac{\mathcal{C}_{p_{2}+1}}{Nq^{p_{2}-1}} \expec{ \partial_{i_{2}j_{2}}^{p_{2}-1} \big( m^{s_{1}+s_{2}+2} ( \partial_{i_{1}j_{1}}^{p_{1}-s_{1}} P^{2r-1} ) \big) } \\
	- \sum_{[p_{1};s_{1}]_{1}}\frac{1}{N^{1+\theta(\mathcal{I}_{2})}}\sum_{\mathcal{I}_{2}}\expec{   \expec{h_{i_{2}j_{2}}^{2}} m^{s_{1}+s_{2}+2} ( \partial_{i_{1}j_{1}}^{p_{1}-s_{1}} P^{2r-1} ) } \\
	= \sum_{[p_{1};s_{1}]_{1}}\frac{1}{N^{1+\theta(\mathcal{I}_{2})}}\sum_{\mathcal{I}_{2}}\sum_{p_{2}=2}^{\ell}{p_{2} \choose 1}\frac{\mathcal{C}_{p_{2}+1}}{Nq^{p_{2}-1}} \expec{ \partial_{i_{2}j_{2}}^{p_{2}-1} \big( m^{s_{1}+s_{2}+2} ( \partial_{i_{1}j_{1}}^{p_{1}-s_{1}} P^{2r-1} ) \big) },
	\end{multline*}
	where we use that $\mathcal{C}_{2}=1$ and $\expec{h_{i_{2}j_{2}}^{2}}=N^{-1}$ by Definition \ref{def: sparse RM}.
    
    If $\partial_{i_{2}j_{2}}$ hits the normalized trace $m$ at least once, then the resulting terms are absorbed into $\bigO_{\prec}\paren{ \Phi_{r} }$ due to \eqref{eq: derivative m}, which means
	\begin{multline*}
	\sum_{[p_{1};s_{1}]_{1}}\frac{1}{N^{1+\theta(\mathcal{I}_{2})}}\sum_{\mathcal{I}_{2}}\sum_{p_{2}=2}^{\ell}{p_{2} \choose 1}\frac{\mathcal{C}_{p_{2}+1}}{Nq^{p_{2}-1}} \expec{ \partial_{i_{2}j_{2}}^{p_{2}-1} \big( m^{s_{1}+s_{2}+2} ( \partial_{i_{1}j_{1}}^{p_{1}-s_{1}} P^{2r-1} ) \big) } \\
	= \sum_{[p_{1};s_{1}]_{1}}\frac{1}{N^{1+\theta(\mathcal{I}_{2})}}\sum_{\mathcal{I}_{2}}\sum_{p_{2}=2}^{\ell}{p_{2} \choose 1}\frac{\mathcal{C}_{p_{2}+1}}{Nq^{p_{2}-1}} \expec{ m^{s_{1}+s_{2}+2} (\partial_{i_{2}j_{2}}^{p_{2}-1}\partial_{i_{1}j_{1}}^{p_{1}-s_{1}} P^{2r-1} ) } + \bigO_{\prec}\paren{ \Phi_{r} } \\
	= \sum_{[p_{2};s_{2}]_{2}}\sum_{[p_{1};s_{1}]_{1}}\frac{1}{N^{2+\theta(\mathcal{I}_{2})}}\sum_{\mathcal{I}_{2}}\expec{ m^{s_{1}+s_{2}+2} (\partial_{i_{2}j_{2}}^{p_{2}-s_{2}}\partial_{i_{1}j_{1}}^{p_{1}-s_{1}} P^{2r-1} ) } + \bigO_{\prec}\paren{ \Phi_{r} }, \quad s_{2}=1,
	\end{multline*}
	which concludes the proof for the case $s_{2}=1$. (Refer to \eqref{eq: def of [p;s]} for the definition of $\sum_{[p;s]_{k}}$.)
	
	If $s_{2}>1$, we consider
	\begin{equation*}
	\frac{1}{N^{\theta(\mathcal{I}_{2})}}
	\sum_{\mathcal{I}_{2}} \expec{ h_{i_{1}j_{1}}^{s_{1}+1} h_{i_{2}j_{2}}^{s_{2}+1}  m^{s_{1}+s_{2}+2}P^{2r-1} }.
	\end{equation*}
	The proof is almost the same with the previous case. Using the cumulant expansion with $h_{i_{1}j_{1}}$ and following the proof of Claim \ref{claim: correction term mathing 1}, we get
	\begin{multline*}
	\frac{1}{N^{\theta(\mathcal{I}_{2})}}
	\sum_{\mathcal{I}_{2}} \expec{ h_{i_{1}j_{1}}^{s_{1}+1} h_{i_{2}j_{2}}^{s_{2}+1}  m^{s_{1}+s_{2}+2}P^{2r-1} } \\
	= \sum_{[p_{1};s_{1}]_{1}}\frac{1}{N^{1+\theta(\mathcal{I}_{2})}}\sum_{\mathcal{I}_{2}}\expec{   h_{i_{2}j_{2}}^{s_{2}+1} m^{s_{1}+s_{2}+2} ( \partial_{i_{1}j_{1}}^{p_{1}-s_{1}} P^{2r-1} ) } + \bigO_{\prec}\paren{ \Phi_{r} },
	\end{multline*}
	where we use that $\partial_{i_{1}j_{1}}(h_{i_{2}j_{2}})=0$ on the summation over distinct indexes $\sum_{\mathcal{I}_{2}}$.

	By the cumulant expansion with $h_{i_{2}j_{2}}$, we have
	\begin{multline*}
	\sum_{[p_{1};s_{1}]_{1}}\frac{1}{N^{1+\theta(\mathcal{I}_{2})}}\sum_{\mathcal{I}_{2}}\expec{   h_{i_{2}j_{2}}^{s_{2}+1} m^{s_{1}+s_{2}+2} ( \partial_{i_{1}j_{1}}^{p_{1}-s_{1}} P^{2r-1} ) } \\
	= \sum_{[p_{1};s_{1}]_{1}}\frac{1}{N^{1+\theta(\mathcal{I}_{2})}}\sum_{\mathcal{I}_{2}} \sum_{p_{2}=1}^{\ell}\frac{\mathcal{C}_{p_{2}+1}}{Nq^{p_{2}-1}} \expec{ \partial_{i_{2}j_{2}}^{p_{2}} \big( h_{i_{2}j_{2}}^{s_{2}} m^{s_{1}+s_{2}+2} ( \partial_{i_{1}j_{1}}^{p_{1}-s_{1}} P^{2r-1} ) \big) } + \bigO_{\prec}\paren{ \Phi_{r} }.
	\end{multline*}
	The derivative $\partial_{i_{2}j_{2}}$ have to hit $h_{i_{2}j_{2}}$ exactly $s_{2}$ times so it follows that
	\begin{multline*}
	\sum_{[p_{1};s_{1}]_{1}}\frac{1}{N^{1+\theta(\mathcal{I}_{2})}}\sum_{\mathcal{I}_{2}} \sum_{p_{2}=1}^{\ell}\frac{\mathcal{C}_{p_{2}+1}}{Nq^{p_{2}-1}} \expec{ \partial_{i_{2}j_{2}}^{p_{2}} \big( h_{i_{2}j_{2}}^{s_{2}} m^{s_{1}+s_{2}+2} ( \partial_{i_{1}j_{1}}^{p_{1}-s_{1}} P^{2r-1} ) \big) } \\
	= \sum_{[p_{1};s_{1}]_{1}}\frac{1}{N^{1+\theta(\mathcal{I}_{2})}}\sum_{\mathcal{I}_{2}} \sum_{p_{2}=s_{2}}^{\ell}{p_{2} \choose s_{2}}\frac{(s_{2}!)\mathcal{C}_{p_{2}+1}}{Nq^{p_{2}-1}} \expec{ \partial_{i_{2}j_{2}}^{p_{2}-s_{2}} \big( m^{s_{1}+s_{2}+2} ( \partial_{i_{1}j_{1}}^{p_{1}-s_{1}} P^{2r-1} ) \big) } + \bigO_{\prec}\paren{ \Phi_{r} }.
	\end{multline*}
	If $\partial_{i_{2}j_{2}}$ hits the normalized trace $m$ at least once, then the resulting terms are absorbed into $\bigO_{\prec}\paren{ \Phi_{r} }$ due to \eqref{eq: derivative m}. Therefore we obtain the desired result:
	\begin{multline*}
	\sum_{[p_{1};s_{1}]_{1}}\frac{1}{N^{1+\theta(\mathcal{I}_{2})}}\sum_{\mathcal{I}_{2}} \sum_{p_{2}=s_{2}}^{\ell} {p_{2} \choose s_{2}}\frac{(s_{2}!)\mathcal{C}_{p_{2}+1}}{Nq^{p_{2}-1}} \expec{ \partial_{i_{2}j_{2}}^{p_{2}-s_{2}} \big( m^{s_{1}+s_{2}+2} ( \partial_{i_{1}j_{1}}^{p_{1}-s_{1}} P^{2r-1} ) \big) } \\
	= \sum_{[p_{1};s_{1}]_{1}}\frac{1}{N^{1+\theta(\mathcal{I}_{2})}}\sum_{\mathcal{I}_{2}} \sum_{p_{2}=s_{2}}^{\ell}{p_{2} \choose s_{2}}\frac{(s_{2}!)\mathcal{C}_{p_{2}+1}}{Nq^{p_{2}-1}} \expec{ m^{s_{1}+s_{2}+2} ( \partial_{i_{2}j_{2}}^{p_{2}-s_{2}}\partial_{i_{1}j_{1}}^{p_{1}-s_{1}} P^{2r-1} ) } + \bigO_{\prec}\paren{ \Phi_{r} } \\
	= \sum_{[p_{2};s_{2}]_{2}}\sum_{[p_{1};s_{1}]_{1}}\frac{1}{N^{2+\theta(\mathcal{I}_{2})}}\sum_{\mathcal{I}_{2}}\expec{ m^{s_{1}+s_{2}+2} (\partial_{i_{2}j_{2}}^{p_{2}-s_{2}}\partial_{i_{1}j_{1}}^{p_{1}-s_{1}} P^{2r-1} ) } + \bigO_{\prec}\paren{ \Phi_{r} }, \quad s_{2}>1.
	\end{multline*}
\end{proof}
\end{claim}

Let us consider a general case.

\begin{proposition}\label{prop: correction term mathing}
	Suppose Assumption \ref{assump: random correction term 1} and Assumption \ref{assump: random correction term 2} hold. Let $s_{1},s_{2},\cdots, s_{k}$ are positive odd integers. Consider $\mathcal{I}_{k}\in\mathcal{J}_{k}$ (refer to \eqref{eq: def of J_k} for the definition of $\mathcal{J}_{k}$). Let $A_{l}$ be as in \eqref{eq: building block} for each $l=1,2,\cdots,k$. Then, \eqref{eq: correction term mathing} holds. (Refer to \eqref{eq: def of [p;s]} for the definition of $\sum_{[p;s]_{k}}$.)
\begin{proof}
	Since the case $k=1$ or $k=2$ is already proved in Claim \ref{claim: correction term mathing 1} and Claim \ref{claim: correction term mathing 2}, it is enough to consider the case $k\ge 3$. We first do the cumulant expansion with $h_{i_{1}j_{1}}$. Following the proof of Claim \ref{claim: correction term mathing 1}, we get 
	\begin{multline*}
	\frac{1}{N^{\theta(\mathcal{I}_{k})}}
	\sum_{\mathcal{I}_{k}} \expec{ A_{1}(s_{1})A_{2}(s_{2})\cdots A_{k}(s_{k}) m^{s_{1}+\cdots+s_{k}+k} P^{2r-1} } \\
	= \sum_{[p_{1};s_{1}]_{1}}\frac{1}{N^{1+\theta(\mathcal{I}_{k})}}\sum_{\mathcal{I}_{k}}\expec{   A_{2}(s_{2})A_{3}(s_{3})\cdots A_{k}(s_{k}) m^{s_{1}+\cdots+s_{k}+k} ( \partial_{i_{1}j_{1}}^{p_{1}-s_{1}} P^{2r-1} ) } + \bigO_{\prec}\paren{ \Phi_{r} },
	\end{multline*}
	where we use that $\partial_{i_{1}j_{1}}(h_{i_{l}j_{l}})=0$ for any $1<l\le k$ on the summation over distinct indexes $\sum_{\mathcal{I}_{k}}$ (refer to \eqref{eq: sum notation} for the definition of $\sum_{\mathcal{I}_{k}}$).
	
	Next we use the cumulant expansion with $h_{i_{2}j_{2}}$ and follow the proof of Claim \ref{claim: correction term mathing 2}. Since $\partial_{i_{2}j_{2}}(h_{i_{l}j_{l}})=0$ for any $2<l\le k$ on the summation over distinct indexes $\sum_{\mathcal{I}_{k}}$, we observe that
	\begin{multline*}
	\sum_{[p_{1};s_{1}]_{1}}\frac{1}{N^{1+\theta(\mathcal{I}_{k})}}\sum_{\mathcal{I}_{k}}\expec{   A_{2}(s_{2})\cdots A_{k}(s_{k}) m^{s_{1}+\cdots+s_{k}+k} ( \partial_{i_{1}j_{1}}^{p_{1}-s_{1}} P^{2r-1} ) } \\
	= \sum_{[p_{2};s_{2}]_{2}}\sum_{[p_{1};s_{1}]_{1}}\frac{1}{N^{2+\theta(\mathcal{I}_{k})}}\sum_{\mathcal{I}_{k}}
	\expec{   A_{3}(s_{3})\cdots A_{k}(s_{k}) m^{s_{1}+\cdots+s_{k}+k} ( \partial_{i_{1}j_{1}}^{p_{1}-s_{1}} P^{2r-1} ) } + \bigO_{\prec}\paren{ \Phi_{r} }.
	\end{multline*}
	
	Repeating the cumulant expansion and following the proof of Claim \ref{claim: correction term mathing 2}, we have
	\begin{multline*}
	\sum_{[p_{2};s_{2}]_{2}}\sum_{[p_{1};s_{1}]_{1}}\frac{1}{N^{2+\theta(\mathcal{I}_{k})}}\sum_{\mathcal{I}_{k}}
	\expec{   A_{3}(s_{3})\cdots A_{k}(s_{k}) m^{s_{1}+\cdots+s_{k}+k} ( \partial_{i_{1}j_{1}}^{p_{1}-s_{1}} P^{2r-1} ) } \\
	= \sum_{[p_{k};s_{k}]_{k}}\ldots \sum_{[p_{1};s_{1}]_{1}} \frac{1}{N^{k+\theta(\mathcal{I}_{k})}} \sum_{\mathcal{I}_{k}} \expec{ m^{s_{1}+\cdots+s_{k}+k}\paren{\partial_{i_{k}j_{k}}^{p_{k}-s_{k}}\ldots\partial_{i_{1}j_{1}}^{p_{1}-s_{1}}P^{2r-1}} } + \bigO_{\prec}\paren{ \Phi_{r} },
	\end{multline*}
	where we use that $\partial_{i_{l}j_{l}}(h_{i_{l'}j_{l'}})=0$ for any $l\neq l'$ on the summation over distinct indexes $\sum_{\mathcal{I}_{k}}$. The desired result follows.
\end{proof}
\end{proposition}

Therefore, the random correction term $\mathcal{Z}_{n}$ should be a linear combination of terms of the form
\begin{align*}
\frac{1}{N^{\theta}}\sum_{\mathcal{I}_{k}}\prod_{l=1}^{k}A_{l}(s_{l}),
\end{align*}
where $s_{1},s_{2},\cdots, s_{k}$ are positive odd integers such that $s_{1}+\cdots+s_{k}+k= 2n$.

\subsection{Proof of Proposition \ref{prop: RME}}\label{subsec: RME proof}

\begin{proof}[Proof of Proposition \ref{prop: RME}]
	Recall \eqref{eq: for RME 1} and \eqref{eq: for RME 2}. Then, we have to construct $Q(m)=\sum_{n=1}^{\ell}\mathcal{Z}_{n}m^{2n}$ so that
	\begin{equation}\label{eq: r1}
	\sum_{p=1}^{\ell}\frac{\mathcal{C}_{p+1}}{N^{2}q^{p-1}}\sum_{i,j}\expec{\deri^{p}(G_{ij}P^{r-1}\bar{P}^{r})} + \E\left[ Q(m)P^{r-1}\bar{P}^{r} \right] = \bigO_{\prec}\paren{ \Phi_{r} }.
	\end{equation}
	For brevity, instead of \eqref{eq: r1}, we shall show
	\begin{equation}\label{eq: r2}
	\sum_{p=1}^{\ell}\frac{\mathcal{C}_{p+1}}{N^{2}q^{p-1}}\sum_{i,j}\expec{\deri^{p}(G_{ij}P^{2r-1})} + \E\left[ Q(m)P^{2r-1} \right] = \bigO_{\prec}\paren{ \Phi_{r} }.
	\end{equation}
	
	Assume that a set of random coefficients $\{\mathcal{Z}_{n}\}_{n=1}^{\ell}$ satisfies Assumption \ref{assump: random correction term 1} and Assumption \ref{assump: random correction term 2}. Consider the first term of the left-hand side of \eqref{eq: r2}:
	\begin{equation}\label{eq: r3}
	\sum_{p=1}^{\ell}\frac{\mathcal{C}_{p+1}}{N^{2}q^{p-1}}\sum_{i,j}\expec{\deri^{p}(G_{ij}P^{2r-1})}.
	\end{equation}
	
	By Proposition \ref{prop: moment final}, the term \eqref{eq: r3} is a linear combination of the terms of the following form (with bounded coefficients):
	\begin{equation*}
	\sum_{[p_{k};s_{k}]_{k}}\ldots \sum_{[p_{1};s_{1}]_{1}} \frac{1}{N^{k+\theta(\mathcal{I}_{k})}} \sum_{\mathcal{I}_{k}} \expec{  m^{s_{1}+\ldots+s_{k}+k}\paren{\partial_{i_{k}j_{k}}^{p_{k}-s_{k}}\ldots\partial_{i_{1}j_{1}}^{p_{1}-s_{1}}P^{2r-1}} } + \bigO_{\prec}(\Phi_{r}), \quad 1\le k\le \ell,
	\end{equation*}
	where for $1\le l\le k$, each $s_{l}$ is a positive odd integer such that $s_{l}\le\ell$, and $\mathcal{I}_{k}\in\mathcal{J}_{k}$. Note that the length of the linear combination is bounded by a large constant only depending on $\ell$ but not $N$.
	
	In addition, due to Proposition \ref{prop: correction term mathing}, the term \eqref{eq: r3} is also a linear combination of the terms of the following form (with bounded coefficients):
	\begin{equation*}
	\expec{\frac{1}{N^{\theta(\mathcal{I}_{k})}}
		\paren{\sum_{\mathcal{I}_{k}}\prod_{l=1}^{k}A_{l}(s_{l})} m^{s_{1}+\cdots+s_{k}+k} P^{2r-1} } + \bigO_{\prec}(\Phi_{r}), \quad 1\le k\le \ell,
	\end{equation*}
	where for $1\le l\le k$, each $s_{l}$ is a positive odd integer such that $s_{l}\le\ell$, $\mathcal{I}_{k}\in\mathcal{J}_{k}$, and $A_{l}$ has the form \eqref{eq: building block} for each $l=1,2,\cdots,k$.
	
	Let us consider the form
	\begin{equation}\label{eq: r4}
	\expec{\frac{1}{N^{\theta(\mathcal{I}_{k})}}
		\paren{\sum_{\mathcal{I}_{k}}\prod_{l=1}^{k}A_{l}(s_{l})} m^{s_{1}+\cdots+s_{k}+k} P^{2r-1} }, \quad 1\le k\le \ell,
	\end{equation}
	where for $1\le l\le k$, each $s_{l}$ is a positive odd integer such that $s_{l}\le\ell$, $\mathcal{I}_{k}\in\mathcal{J}_{k}$. Formally we can write
	\begin{equation*}
	\sum_{p=1}^{\ell}\frac{\mathcal{C}_{p+1}}{N^{2}q^{p-1}}\sum_{i,j}\expec{\deri^{p}(G_{ij}P^{2r-1})}
	= \big(\text{linear combination of terms of the form \eqref{eq: r4}}\big) +  \bigO_{\prec}(\Phi_{r}).
	\end{equation*}
	Then, we can construct $\{\mathcal{Z}_{n}\}_{n=1}^{\ell}$ such that
	\begin{equation*}
	\big(\text{linear combination of terms of the form \eqref{eq: r4}}\big) + \E\left[ Q(m)P^{2r-1} \right] = 0,
	\end{equation*}
	by setting $\mathcal{Z}_{n}$ as a linear combination of terms of the form
	\begin{align}\label{eq: form of correction terms}
	\frac{1}{N^{\theta(\mathcal{I}_{k})}}\sum_{\mathcal{I}_{k}}\prod_{l=1}^{k}A_{l}(s_{l}),
	\end{align}
	where $s_{1},s_{2},\cdots, s_{k}$ are positive odd integers such that $s_{1}+\cdots+s_{k}+k= 2n$. By the construction, the estimate \eqref{eq: r2} holds. Moreover, Assumption \ref{assump: random correction term 1} and Assumption \ref{assump: random correction term 2} hold for the constructed $\{\mathcal{Z}_{n}\}_{n=1}^{\ell}$ according to Appendix \ref{sec: correction term calculation}. Therefore we complete the proof.
\end{proof}

\begin{remark}
	\begin{figure}[h]\label{fig: bipartite subgraph}
		\includegraphics[scale=1]{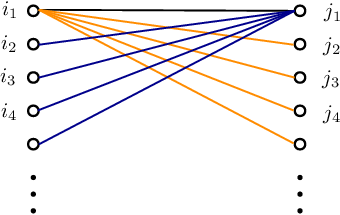}
		\caption{Interpretation of random correction terms on the graph}
	\end{figure}
    The term \eqref{eq: form of correction terms} can be associated with the number of bipartite subgraphs of a certain form; see Figure \ref{fig: bipartite subgraph}.
    In \cite{HY22+}, Huang and Yau describe the term \eqref{eq: form of correction terms} using a weighted forest. We refer to \cite[Definition 2.2, Eq (2.7)]{HY22+} for more detail.
\end{remark}

\section{Local law and edge rigidity}\label{sec: proof of local law and rigidity} 
\begin{proof}[Proof of Theorem \ref{thm: local law}]
	We start with a standard argument using stability analysis (e.g.~\cite[Lemma 6.1]{BL21+}). Let us set
	\begin{align*}
		g(\tz) = m(\tz) - \widetilde{m}(\tz), \quad \Lambda(\tz)=|g(\tz)|.
	\end{align*}
	From Taylor expansion, it follows that
	\begin{align*}
		P(\tz,m(\tz)) = \partial_{2}P(\tz,\widetilde{m}(\tz))g(\tz) + \frac{1}{2}\partial_{2}^{2}P(\tz,\widetilde{m}(\tz))g^{2}(\tz) + R(g(\tz)),
	\end{align*}
	where $R(y)=b_{1}y^{3}+b_{2}y^{4}+\cdots$ is a polynomial whose coefficients are random and stochastically dominated by $q^{-1}$. We write
	\begin{align*}
		f(\tz) = P(\tz,m(\tz)) - R(g(\tz)), \quad b(\tz) = \partial_2 P(\tz,\widetilde{m}(\tz)), \quad a(\tz) = \partial_{2}^{2}P(\tz,\widetilde{m}(\tz)).
	\end{align*}
	Then, we have the
	\begin{align*}
		a(\tz)g^{2}(\tz) + 2b(\tz)g(\tz) - 2f(\tz) = 0.
	\end{align*}
	Solving the quadratic equation with respect to $g(\tz)$, we obtain 
	\begin{align}\label{eq: sol of quadratic eq}
		a(\tz)g(\tz) = -b(\tz) \pm \sqrt{ b^{2}(\tz) + 2f(\tz)a(\tz) }.
	\end{align}
	Since $m$ and $\widetilde{m}$ has a trivial bound $\eta^{-1}$, we find $g(\tz)\to0$ as $\eta\to\infty$. Note that
	\begin{align*}
		b(\tz) &= \tz + \sum_{n=1}^{\ell} 2n\mathcal{Z}_{n}\widetilde{m}^{2n-1}(\tz),\\
		a(\tz) &= 2 + 2(\mathcal{Z}_{n}-1) + \sum_{n=2}^{\ell} 2n(2n-1)\mathcal{Z}_{n}\widetilde{m}^{2n-2}(\tz).
	\end{align*}
	If $E$ is bounded and $\eta>0$ is large, we get $b(\tz)\asymp \eta$ for large $N$. We also have $a(\tz)\asymp 2$ for any $\eta>0$ if $N$ is large. Combining these estimates, we conclude that $g(\tz)\to0$ and $b^{2}(\tz)\gg f(\tz)a(\tz)$ as $\eta\to\infty$. Thus we should take $+$ sign in the equation (\ref{eq: sol of quadratic eq}):
	\begin{align*}
		a(\tz)g(\tz) = -b(\tz) + \sqrt{ b^{2}(\tz) + 2f(\tz)a(\tz) }.
	\end{align*}
	We claim that 
	\begin{align*}
		\Lambda \leq C\sqrt{|f(\tz)|}.
	\end{align*}
	If $b^{2}(\tz)\le |2f(\tz)a(\tz)|$, then $|g(\tz)|\le C(|b(\tz)|+\sqrt{4|f(\tz)a(\tz)|}) \le C\sqrt{|f(\tz)|}$. Otherwise, we can observe
	\begin{align*}
		\sqrt{ b^{2}(\tz) + 2f(\tz)a(\tz) } = b(\tz)\sqrt{1+\frac{2f(\tz)a(\tz)}{b^{2}(\tz)}} = b(\tz) \paren{ 1 + \bigO\paren{\frac{|f(\tz)a(\tz)|}{b^{2}(\tz)}} } \le b(\tz) + C\sqrt{|f(\tz)|},
	\end{align*}
	so the above claim follows. Furthermore, since $|R(g(\tz))|\prec \Lambda^{3}(\tz)/q < \Lambda^{2}(\tz)/q $ by the a priori bound $\Lambda \ll N^{-c}$ for large $N$, we can show $\Lambda \le C\sqrt{|P(\tz,m(\tz))|} + C\Lambda(\tz)/\sqrt{q}$ so it follows
	\begin{align}\label{eq: bound for local law}
		\Lambda \le C\sqrt{|P(\tz,m(\tz))|}.
	\end{align}
	We are now prepared to derive the local law from the recursive moment estimate, Proposition \ref{prop: RME}. Using Young's inequality, we have
	\begin{multline*}
		\expec{|P(\tz,m(\tz))|^{2r}} \prec \E\bigg[\paren{\frac{\Im m(\tz)}{N\eta}}^{2r} + \frac{1}{N^{2r}}  \\
		+ |\partial_{2}P|^{r}\paren{\frac{\Im m(\tz)}{N\eta}}^{3r/2} + \frac{1}{N^{r}}\paren{\frac{\Im m(\tz)}{N\eta}}^{r/2} + \paren{\frac{|\partial_{2}P|\Im m(\tz)}{N^{2}\eta^{2}}}^{r} + \frac{1}{N^{2r}\eta^{r}}\bigg].
	\end{multline*}
	Let $\boldsymbol{\phi}$ be as in \eqref{eq: bold phi}. It follows from Lemma \ref{lem: property of tilde m} that
	\begin{multline}\label{eq: RME and Young's ineq}
		\expec{|P(\tz,m(\tz))|^{2r}}
		\prec \E\bigg[ \paren{\frac{\boldsymbol{\phi}}{N\eta}}^{2r} + \frac{\Lambda^{2r}}{(N\eta)^{2r}} + \frac{1}{N^{2r}} + (\sqrt{\kappa+\eta})^{r}\paren{\frac{\boldsymbol{\phi}}{N\eta}}^{3r/2} + (\sqrt{\kappa+\eta})^{r}\frac{\Lambda^{3r/2}}{(N\eta)^{3r/2}}
		\\ 
		+ \frac{1}{N^{r}}\paren{\frac{\boldsymbol{\phi}}{N\eta}}^{r/2} + \frac{\Lambda^{r/2}}{N^{r}(N\eta)^{r/2}}
		+ (\sqrt{\kappa+\eta})^{r}\paren{\frac{\boldsymbol{\phi}}{N^{2}\eta^{2}}}^{r} + (\sqrt{\kappa+\eta})^{r}\frac{\Lambda^{r}}{(N\eta)^{2r}} + \frac{1}{N^{2r}\eta^{r}} \bigg].
	\end{multline}
	Since the estimate \eqref{eq: bound for local law} implies
	\begin{align*}
		\E\Lambda^{4r} \le C \expec{|P(\tz,m(\tz))|^{2r}},
	\end{align*}
	one of the following estimates should hold:
	\begin{align*}
		\E\Lambda^{4r} \prec \E\bigg[ \paren{\frac{\boldsymbol{\phi}}{N\eta}}^{2r} + \frac{1}{N^{2r}} + (\sqrt{\kappa+\eta})^{r}\paren{\frac{\boldsymbol{\phi}}{N\eta}}^{3r/2} + \frac{1}{N^{r}}\paren{\frac{\boldsymbol{\phi}}{N\eta}}^{r/2} + (\sqrt{\kappa+\eta})^{r}\paren{\frac{\boldsymbol{\phi}}{N^{2}\eta^{2}}}^{r} + \frac{1}{N^{2r}\eta^{r}} \bigg] ;
	\end{align*}
	\begin{align*}
		\E\Lambda^{2r} \prec \frac{1}{(N\eta)^{2r}} ; \quad \E\Lambda^{3r/2} \prec \frac{(\sqrt{\kappa+\eta})^{3r/5}}{(N\eta)^{9r/10}}; \quad \E\Lambda^{r/2} \prec \frac{1}{N^{r/7}(N\eta)^{r/14}}; \quad
		\E\Lambda^{r} \prec \frac{(\sqrt{\kappa+\eta})^{r/3}}{(N\eta)^{2r/3}}.
	\end{align*}
	This implies the local law:
	\begin{multline}\label{eq: local law bound}
		\Lambda \prec \paren{\frac{\boldsymbol{\phi}}{N\eta}}^{1/2} + (\sqrt{\kappa+\eta})^{1/4}\paren{\frac{\boldsymbol{\phi}}{N\eta}}^{3/8} + \frac{1}{N^{1/4}}\paren{\frac{\boldsymbol{\phi}}{N\eta}}^{1/8} + (\sqrt{\kappa+\eta})^{1/4}\paren{\frac{\boldsymbol{\phi}}{N^{2}\eta^{2}}}^{1/4} \\
		+ \frac{1}{N^{1/2}\eta^{1/4}} + \frac{1}{N\eta} + \frac{(\sqrt{\kappa+\eta})^{2/5}}{(N\eta)^{3/5}} + \frac{1}{N^{2/7}(N\eta)^{1/7}} + \frac{(\sqrt{\kappa+\eta})^{1/3}}{(N\eta)^{2/3}}.
	\end{multline}
\end{proof}

\begin{proof}[Proof of Theorem \ref{thm: rigidity}]                                                                                                                                                                                                                                                                                                                                                                                                                                                                                                                                                                                                                                                                                                                                                                                                                                                                                                                                                                                                                                                                                                                                                                                                                                                                                                                                                                                                                                                                                                                                                                                                                                                                                                                                                                                                                                                                                                                                                                                                                                                                                                                                                                                                                                                                                                                                                                                                                                                                                                                                                                                                                                                                                                                                                                                                                                                                                                                                                                                                                                                                                                                                                                                                                                                                                                                                                                                                                                                                                                                                                                                                                                                                                                                                                                                                                                                                                                                                                                                                                                                                                             
	We shall follow the proof of \cite[Theorem 1.4]{HLY20}. Let $c>0$ be a small positive real. Take $E \ge N^{-2/3+c}$ and $\eta=N^{-2/3}$. It follows from \eqref{eq: RME and Young's ineq} that
	\begin{multline*}
		\size{ P(\tz,m(\tz)) } \prec \frac{\boldsymbol{\phi}}{N\eta} + \frac{\Lambda}{N\eta} + (\sqrt{\kappa+\eta})^{1/2}\paren{\frac{\boldsymbol{\phi}}{N\eta}}^{3/4} + (\sqrt{\kappa+\eta})^{1/2}\frac{\Lambda^{3/4}}{(N\eta)^{3/4}} + \frac{1}{N^{1/2}}\paren{\frac{\phi}{N\eta}}^{1/4} \\
		+ \frac{\Lambda^{1/4}}{N^{1/2}(N\eta)^{1/4}} + (\sqrt{\kappa+\eta})^{1/2}\paren{\frac{\boldsymbol{\phi}}{N^{2}\eta^{2}}}^{1/2} + (\sqrt{\kappa+\eta})^{1/2}\frac{\Lambda^{1/2}}{N\eta} + \frac{1}{N\eta^{1/2}}.
	\end{multline*}
	Applying \cite[Lemma 2.13]{HLY20}, we get
	\begin{multline*}
		\Lambda \prec \frac{\boldsymbol{\phi}}{N\eta\sqrt{\kappa+\eta}} + \frac{\Lambda}{N\eta\sqrt{\kappa+\eta}} + (\sqrt{\kappa+\eta})^{-1/2}\paren{\frac{\boldsymbol{\phi}}{N\eta}}^{3/4} + (\sqrt{\kappa+\eta})^{-1/2}\frac{\Lambda^{3/4}}{(N\eta)^{3/4}} + \frac{1}{N^{1/2}\sqrt{\kappa+\eta}}\paren{\frac{\phi}{N\eta}}^{1/4} \\
		+ \frac{\Lambda^{1/4}}{N^{1/2}(N\eta)^{1/4}\sqrt{\kappa+\eta}} + (\sqrt{\kappa+\eta})^{-1/2}\paren{\frac{\boldsymbol{\phi}}{N^{2}\eta^{2}}}^{1/2} + (\sqrt{\kappa+\eta})^{-1/2}\frac{\Lambda^{1/2}}{N\eta} + \frac{1}{N\eta^{1/2}\sqrt{\kappa+\eta}},
	\end{multline*}
	and hence conclude with very high probability 
	\begin{align}\label{eq: imporved local law outside the spectrum}
		\size{m(\tz)} \ll \frac{1}{N\eta}.
	\end{align}
	Using the estimate \eqref{eq: imporved local law outside the spectrum}, we can show that there is no eigenvalue in the interval $[\widetilde{\mathcal{L}}+E-\eta,\widetilde{\mathcal{L}}+E+\eta]$ with very high probability. If there is an eigenvalue $\lambda_{i}$ in the small interval $[\widetilde{\mathcal{L}}+E-\eta,\widetilde{\mathcal{L}}+E+\eta]$, then it follows that
	\begin{align*}
		\Im m(\tz) \ge \frac{1}{N}\Im\paren{\frac{1}{\lambda_{i}-(\widetilde{\mathcal{L}}+E+i\eta)}} \ge \frac{1}{2N\eta},
	\end{align*}
	which is in contradiction to the bound \eqref{eq: imporved local law outside the spectrum}.
	Following the lattice argument and using a priori bound (e.g.~\cite[Lemma 4.4]{EKYY13} or \cite[Theorem 2.9]{LS18}),
	we have with very high probability
	\begin{align}\label{eq: upper bound for the largest eignevalue}
		\lambda_{1}-\widetilde{\mathcal{L}} \le N^{-2/3+c}.
	\end{align}
	In addition, using the standard argument of Helffer-Sj\H{o}strand calculus (e.g.~\cite[Lemma B.1]{ERSY10}), the following estimate holds with very high probability:
	\begin{align*}
		\mu\paren{[\widetilde{\mathcal{L}}-N^{-2/3+c},\infty)} \asymp N^{-1+2c/3},
	\end{align*} 
	where $\mu$ is the empirical eigenvalue distribution of $H$ given by \eqref{eq: ESD}.
	The desired result follows if $c>0$ is small enough.
\end{proof}

\appendix
\begin{appendices}

\section{Asymptotics of the random correction terms}\label{sec: correction term calculation}

We should check Assumption \ref{assump: random correction term 1} and Assumption \ref{assump: random correction term 2} for $\{\mathcal{Z}_{n}\}_{n=1}^{\ell}$ constructed in Section \ref{subsec: RME proof}. By the construction, it follows that
\begin{equation*}
\mathcal{Z}_{1} = \frac{1}{N}\sum_{i\neq j} h_{ij}^{2} .
\end{equation*}
We have
\begin{equation*}
\mathcal{Z}_{1} - 1 = \frac{1}{N}\sum_{i,j}\paren{h_{ij}^{2}-\expec{h_{ij}^{2}}} + \bigO_{\prec}\paren{\frac{1}{N}}.
\end{equation*}
According to \cite{HLY20}, we get
\begin{equation*}
\frac{1}{N}\sum_{i,j}\paren{h_{ij}^{2}-\expec{h_{ij}^{2}}} = \mathcal{X} \prec \frac{1}{\sqrt{N}q},
\end{equation*}
where recall \eqref{eq: mathcal X} for the definition of $\mathcal{X}$. Thus the estimate \eqref{eq: asymp 1} follows.

Now we want to check \eqref{eq: asymp 2} and \eqref{eq: asymp 3}. Since $\mathcal{Z}_{n}$ is a linear combination of terms of the form \eqref{eq: form of correction terms}, it is enough to check that for 
\begin{equation}\label{eq: s1}
\frac{1}{N^{\theta}}\sum_{\mathcal{I}_{k}}\prod_{l=1}^{k}A_{l}(s_{l}) \prec \frac{1}{q^{n-1}},
\end{equation}
and
\begin{align}\label{eq: s2}
\deri\paren{\frac{1}{N^{\theta}}\sum_{\mathcal{I}_{k}}\prod_{l=1}^{k}A_{l}(s_{l})} \prec \frac{1}{N},
\end{align}
where $s_{1},s_{2},\cdots, s_{k}$ are positive odd integers such that $s_{1}+\cdots+s_{k}+k= 2n$.

Let us consider
\begin{align*}
	\expec{\paren{\frac{1}{N}\sum_{i,j}h_{ij}^{2m}}^{r}}.  
\end{align*}
It follows that
\begin{align*}
	\frac{1}{N^{r}} \expec{\paren{\sum_{i,j}h_{ij}^{2m}}^{r}}
	& =
	\frac{1}{N^{r}}\sum_{l=1}^{r}\sum_{r_{1},\cdots,r_{l}}\sum_{(i_{1},j_{1})\neq\cdots\neq (i_{l},j_{l})}\expec{(h_{i_{1}j_{1}}^{2m})^{r_{1}}\cdots (h_{i_{l}j_{l}}^{2m})^{r_{l}}} \\
	& \le C \sum_{l=1}^{r} \combi{N^{2}}{l}\paren{\frac{1}{N^{r+l}q^{2mr-2l}}} \\
	& \le \frac{C}{q^{2r(m-1)}},
\end{align*}
where in the first line, the notation
$$\sum_{(i_{1},j_{1})\neq\cdots\neq (i_{l},j_{l})},$$
means the summation over all $l$ distinct index pairs $\{(i_{1},j_{1}),\cdots,(i_{l},j_{l})\}$. By Markov's inequality, we have for $m \ge 1$
\begin{align*}
	\frac{1}{N}\sum_{i,j}h_{ij}^{2m} \prec \frac{1}{q^{2(m-1)}}.
\end{align*}
Similarly, for $m\ge1$, we get
\begin{align}\label{eq: s3}
	\sum_{j}h_{ij}^{2m} \prec \frac{1}{q^{2(m-1)}}.
\end{align}
We also have
\begin{align*}
	\frac{1}{N}\sum_{i,j}\paren{h_{ij}^{2}-\expec{h_{ij}^{2}}} \prec \frac{1}{\sqrt{N}q},
\end{align*}
and
\begin{align*}
	\sum_{j}\paren{h_{ij}^{2}-\expec{h_{ij}^{2}}} \prec \frac{1}{q}.
\end{align*}
Thus we can see that
\begin{align*}
	\frac{1}{N}\sum_{i_{1},j_{1}}h_{i_{1}j_{1}}^{s_{1}+1}\sum_{j_{2}}h_{i_{1}j_{2}}^{s_{2}+1}\sum_{i_{3}}h_{i_{3}j_{1}}^{s_{3}+1} \prec \frac{1}{q^{s_{1}+s_{2}+s_{3}-3}},
\end{align*}
where $s_{1},s_{2},s_{3}$ are positive odd integer. Extending the above argument, we can obtain \eqref{eq: s1} using the condition that $s_{1},s_{2},\cdots, s_{k}$ are positive odd integers such that $s_{1}+\cdots+s_{k}+k= 2n$.

By the product rule, we have
\begin{multline*}
\deri\paren{\frac{1}{N^{\theta}}\sum_{\mathcal{I}_{k}}\prod_{l=1}^{k}A_{l}(s_{l})} \\
= \frac{1}{N^{\theta}}\sum_{\mathcal{I}_{k}}\big( \deri(A_{1}(s_{1}))A_{2}(s_{2})\cdots A_{k}(s_{k}) + A_{1}(s_{1})\deri(A_{2}(s_{2}))A_{3}(s_{3})\cdots A_{k}(s_{k}) \\
+ \cdots + A_{1}(s_{1})A_{2}(s_{2})\cdots A_{k-1}(s_{k-1})\deri(A_{k}(s_{k})) \big).
\end{multline*}
Thus, it is enough to show for $1\le l\le k$
\begin{equation*}
\frac{1}{N^{\theta}}\sum_{\mathcal{I}_{k}} A_{1}(s_{1})\cdots A_{l-1}(s_{l-1}) \deri(A_{l}(s_{l})) A_{l+1}(s_{l+1})\cdots A_{k}(s_{k}) \prec \frac{1}{N}
\end{equation*}
We claim that
\begin{equation*}
\frac{1}{N}\sum_{i_{l},j_{l}}\deri(A_{l}(s_{l})) \prec \frac{1}{N}.
\end{equation*}
We first observe that
\begin{equation*}
\deri(A_{l}(s_{l})) =
\begin{cases*}
(s_{l}+1)h_{i_{l}j_{l}}^{s_{l}} & $\{i_{l}, j_{l}\}=\{i,j\}$, \\
0 & otherwise.
\end{cases*}
\end{equation*}
Then, it follows that
\begin{equation*}
\frac{1}{N}\sum_{i_{l},j_{l}}\deri(A_{l}(s_{l})) = \frac{2(s_{l}+1)}{N}h_{ij}^{s_{l}}.
\end{equation*}
Since we have $h_{ij}\prec q^{-1}$ due to the given condition in Definition \ref{def: sparse RM}, the claim is proved. Consider an index pair $(i_{l'},j_{l'})$ such that $1\le l' \le k$ and $l'\neq l$. Due to \eqref{eq: s3}, if $i_{l'} \equiv i_{l}$, then we have
\begin{equation*}
\sum_{j_{l'}}h_{i_{l'}i_{l'}}^{s_{l'}+1} \prec \frac{1}{q^{s_{l'}-1}}.
\end{equation*}
Similarly, if $j_{l'} \equiv j_{l}$, we get
\begin{equation*}
\sum_{i_{l'}}h_{i_{l'}i_{l'}}^{s_{l'}+1} \prec \frac{1}{q^{s_{l'}-1}}.
\end{equation*}
Combining the above observations with \eqref{eq: s1}, we can deduce that
\begin{equation*}
\frac{1}{N^{\theta}}\sum_{\mathcal{I}_{k}} A_{1}(s_{1})\cdots A_{l-1}(s_{l-1}) \deri(A_{l}(s_{l})) A_{l+1}(s_{l+1})\cdots A_{k}(s_{k}) \prec \frac{1}{N}.
\end{equation*}
Therefore \eqref{eq: s2} is also proved.

\section{Properties of a solution of the self-consistent equation}\label{sec: properties of tilde m}
\begin{proof}[Proof of Lemma \ref{lem: property of tilde m}]
	We shall use the argument in \cite[Proposition 2.5, Proposition 2.6]{HLY20} and \cite[Lemma 4.1]{LS18}. Define
	\begin{align*}
		R(w) = -\frac{1}{w} - w - \frac{Q(w)-w^{2}}{w} = -\frac{1}{w} - w - (\mathcal{Z}_{1}-1)w - \sum_{n=2}^{\ell}\mathcal{Z}_{n}w^{2n-1}.
	\end{align*}
	Observe that $P(z,w)=0$ if and only if $R(w)=z$. Taking derivative, we have
	\begin{align*}
		R'(w) = \frac{1}{w^{2}} - 1 - (\mathcal{Z}_{1}-1) - \sum_{n=2}^{\ell}(2n-1)\mathcal{Z}_{n}w^{2n-2},
	\end{align*}
	and
	\begin{align*}
		R''(w) = -\frac{2}{w^{3}} - \sum_{n=2}^{\ell} (2n-1)(2n-2)\mathcal{Z}_{n}w^{2n-3}.
	\end{align*}
	Note that
	\begin{align*}
		\mathcal{Z}_{1}-1 \prec \frac{1}{\sqrt{N}q},
	\end{align*}
	and for $n\ge 2$
	\begin{align*}
		\mathcal{Z}_{n} \prec \frac{1}{q^{n-1}}.
	\end{align*}
	Following the argument in \cite[Appendix B]{HLY20}, we notice that $R'(w)$ has two solutions on $(-C,C)$ if $N$ is large enough. Denote these solutions by $\pm\tau$. From now on, we shall construct $\widetilde{\mathcal{L}}$. We write $w=1-\eps$. Then we can see that
	\begin{align*}
		R'(w) = (1+\eps+\eps^{2}+\cdots)^{2} - 1 - (\mathcal{Z}_{1}-1) -  \sum_{n=2}^{\ell}(2n-1)\mathcal{Z}_{n}(1-\eps)^{2n-2}.
	\end{align*}
	We set
	\begin{align*}
		\eps_{0} \coloneqq \frac{1}{2}\paren{(\mathcal{Z}_{1}-1)+\sum_{n=2}^{\ell}(2n-1)\mathcal{Z}_{n}}.
	\end{align*}
	Next we write $w=1-(\eps_{0}+\eps)$. It follows that
	\begin{align*}
		R'(w) & = \paren{1+(\eps_{0}+\eps)+(\eps_{0}+\eps)^{2}+\cdots}^{2} - 1 - (\mathcal{Z}_{1}-1) -  \sum_{n=2}^{\ell}(2n-1)\mathcal{Z}_{n}\paren{1-(\eps_{0}+\eps)}^{2n-2} \\
		& = \paren{1+(\eps_{0}+\eps)+(\eps_{0}+\eps)^{2}+\cdots}^{2} - 1 - 2\eps_{0} - E\paren{(\mathcal{Z}_{n}), \eps_{0}} - \text{(remainder)}.
	\end{align*}
	where in the second line $E\paren{(\mathcal{Z}_{n}), \eps_{0}}$ is given by
	\begin{align*}
		E\paren{(\mathcal{Z}_{n}), \eps_{0}} \coloneqq \sum_{n=2}^{\ell}(2n-1)\mathcal{Z}_{n}\paren{\paren{1-\eps_{0}}^{2n-2}-1}.
	\end{align*}
	To make cancellation, we set
	\begin{align*}
		\eps_{1} \coloneqq \frac{1}{2}E\paren{(\mathcal{Z}_{n}), \eps_{0}} - (\eps_{0}^{2} + \eps_{0}^{3} + \cdots).
	\end{align*}
	Repeating the above argument similarly, we can make a sequence $\{\eps_{m}\}$ such that $\tau \asymp 1 - \sum_{m}\eps_{m} + \bigO_{\prec}(N^{-C})$, $\eps_{m}\gg \eps_{m+1}$ and each $\eps_{m}$ is a polynomial in variables $(\mathcal{Z}_{n})$. Let us define $\widetilde{L}$ by setting 
	\begin{align*}
		\widetilde{L} \coloneqq R(-\tau).
	\end{align*}
	Using $\tau \asymp 1 - \sum_{m}\eps_{m} + \bigO_{\prec}(N^{-C})$, we obtain
	\begin{align*}
		\widetilde{L} = \asedge + \bigO_{\prec}(N^{-C}).
	\end{align*}
	where $\asedge$ is a polynomial in variables $(\mathcal{Z}_{n})$. We define
	\begin{align*}
		\mathcal{D}_{w}\coloneqq\{w\in\C : |w| < 5 \},
	\end{align*}
	and
	\begin{align*}
		\mathcal{D}_{z}\coloneqq\{z=E+i\eta : |E|,|\eta|\le 2\sqrt{2} \}.
	\end{align*}
	According to \cite[Appendix B]{HLY20}, by Rouch\'{e}'s theorem, we find that $R(w)$ has exactly two critical points $\pm\tau$ on $\mathcal{D}_{w}$. Again, by Rouch\'{e}'s theorem, if $z\in\mathcal{D}_{z}$, the equation $P(z,w)=0$ has exactly two solutions on $\mathcal{D}_{w}$. Furthermore, if $z\in(-\widetilde{L}, \widetilde{L})$, one solution of $P(z,w)=0$ is on $\C_{+}$ and the other is on $\C_{+}$. Considering $z\in(-\widetilde{L}, \widetilde{L})$, the solution $w(z)\in\C_{+}$ of $P(z,w)=0$ forms a curve on $\C_{+}$, joining $\tau$ and $-\tau$. We denote this curve by $\Gamma$.
	
	Consider the region $\mathcal{D}_{\Gamma}$, bounded by the curve $\Gamma$ and the interval $[-\widetilde{L},\widetilde{L}]$. We can see that $R(w)$ is biholomorphic from the region $\mathcal{D}_{\Gamma}$ to the upper half-plane by using maximum principle and considering one-point compactification of the complex plane. This let us take $\widetilde{m}$ to be the inverse of $R(w)$, i.e.,
	\begin{align*}
		\widetilde{m}(z) \coloneqq R^{-1}(z), \quad z\in\C_{+}.
	\end{align*}
	Let $\widetilde{\rho}$ be the probability measure obtained by Stieltjes inversion of $\widetilde{m}(z)$. The probability measure $\widetilde{\rho}$ is supported on $[-\widetilde{L},\widetilde{L}]$ and has strictly positive density on $(-\widetilde{L},\widetilde{L})$. Considering Taylor expansion of $R(w)$ in a small neighborhood of $\pm\tau$, we can show $\widetilde{\rho}$ has square root behavior at the edges $\pm \widetilde{L}$. The asymptotics of derivatives of $P(z,\widetilde{m}(z))$ easily can be checked as in \cite[Appendix B]{HLY20}.
\end{proof}

\section{Proof of Lemma \ref{lem: moment estimate general}}\label{appen: lemma proof}

\begin{proof}[Proof of Lemma \ref{lem: moment estimate general}]
	Consider the left-hand side of \eqref{eq: t1}
	\begin{equation*}
	\sum_{p=1}^{\ell}\frac{\mathcal{C}_{p+1}}{N^{2}q^{p-1}}\sum_{x,y}\expec{ \partial_{xy}^{p}\paren{ m^{d}G_{yx}G_{ii}^{t} \bigg(\prod_{j=1}^{k} G_{v_{j}v_{j}}^{u_{j}}\bigg) D(P) } }.
	\end{equation*}
	Due to Proposition \ref{prop: bounds for derivatives} and \cite[Proposition A.1]{HLY20}, we can replace $\sum_{x,y}$ with $\sum_{x\neq y}$ as follows:
	\begin{multline*}
	\sum_{p=1}^{\ell}\frac{\mathcal{C}_{p+1}}{N^{2}q^{p-1}}\sum_{x,y}\expec{ \partial_{xy}^{p}\paren{ m^{d}G_{yx}G_{ii}^{t} \bigg(\prod_{j=1}^{k} G_{v_{j}v_{j}}^{u_{j}}\bigg) D(P) } } \\
	= \sum_{p=1}^{\ell}\frac{\mathcal{C}_{p+1}}{N^{2}q^{p-1}}\sum_{x\neq y}\expec{ \partial_{xy}^{p}\paren{ m^{d}G_{yx}G_{ii}^{t} \bigg(\prod_{j=1}^{k} G_{v_{j}v_{j}}^{u_{j}}\bigg) D(P) } } + \bigO_{\prec}\paren{ \Phi_{r} }.
	\end{multline*}
	Note that
	\begin{multline}\label{eq: tt1}
	\frac{\mathcal{C}_{p+1}}{N^{2}q^{p-1}}\sum_{x\neq y}\expec{ \partial_{xy}^{p}\paren{ m^{d}G_{yx}G_{ii}^{t} \bigg(\prod_{j=1}^{k} G_{v_{j}v_{j}}^{u_{j}}\bigg) D(P) } } \\
	= \sum_{s=0}^{p}\combi{p}{s}\frac{\mathcal{C}_{p+1}}{N^{2}q^{p-1}}\sum_{x\neq y}
	\expec{ \paren{ \partial_{xy}^{s} G_{yx} } \partial_{xy}^{p-s}\paren{ m^{d}G_{ii}^{t} \bigg(\prod_{j=1}^{k} G_{v_{j}v_{j}}^{u_{j}}\bigg) D(P) } } \\
	= \sum_{s=0}^{p}\combi{p}{s}\frac{\mathcal{C}_{p+1}}{N^{2}q^{p-1}}\sum_{x\neq y}
	\expec{ \paren{ D_{xy}^{s} G_{yx} } \partial_{xy}^{p-s}\paren{ m^{d}G_{ii}^{t} \bigg(\prod_{j=1}^{k} G_{v_{j}v_{j}}^{u_{j}}\bigg) D(P) } } + \bigO_{\prec}\paren{ \Phi_{r} },
	\end{multline}
	where we use \eqref{eq: derivative G_ij} for the second equality. If the derivative $\partial_{xy}$ hits $m$, $G_{ii}$ or $G_{v_{j}v_{j}}$, then the resulting terms are absorbed into $\bigO_{\prec}\paren{ \Phi_{r} }$ due to \eqref{eq: derivative m} and \eqref{eq: ward id}, which implies
	\begin{multline}\label{eq: tt2}
	\sum_{s=0}^{p}\combi{p}{s}\frac{\mathcal{C}_{p+1}}{N^{2}q^{p-1}}\sum_{x\neq y}
	\expec{ \paren{ D_{xy}^{s} G_{yx} } \partial_{xy}^{p-s}\paren{ m^{d}G_{ii}^{t} \bigg(\prod_{j=1}^{k} G_{v_{j}v_{j}}^{u_{j}}\bigg) D(P) } } \\
	= \sum_{s=0}^{p}\combi{p}{s}\frac{\mathcal{C}_{p+1}}{N^{2}q^{p-1}}\sum_{x\neq y}
	\expec{ \paren{ D_{xy}^{s} G_{yx} } m^{d}G_{ii}^{t} \bigg(\prod_{j=1}^{k} G_{v_{j}v_{j}}^{u_{j}}\bigg) \paren{ \partial_{xy}^{p-s} D(P) } }
	\end{multline}
	If $s$ is odd, we observe that
	\begin{equation*}
	D_{xy}^{s}G_{xy} = -(s!)G_{xx}^{\frac{s+1}{2}}G_{yy}^{\frac{s+1}{2}} + (\text{the terms having at least two off-diagonal entries}).
	\end{equation*}
	Thus, due to \eqref{eq: ward id}, we have
	\begin{multline*}
	\sum_{s=0}^{p}\combi{p}{s}\frac{\mathcal{C}_{p+1}}{N^{2}q^{p-1}}\sum_{x\neq y}
	\expec{ \paren{ D_{xy}^{s} G_{yx} } m^{d}G_{ii}^{t} \bigg(\prod_{j=1}^{k} G_{v_{j}v_{j}}^{u_{j}}\bigg) \paren{ \partial_{xy}^{p-s} D(P) } } \\
	= -\sum_{s=0}^{p}\combi{p}{s}\frac{(s!)\mathcal{C}_{p+1}}{N^{2}q^{p-1}}\sum_{x\neq y} \expec{ G_{xx}^{\frac{s+1}{2}}G_{yy}^{\frac{s+1}{2}} m^{d}G_{ii}^{t} \bigg(\prod_{j=1}^{k} G_{v_{j}v_{j}}^{u_{j}}\bigg) \paren{ \partial_{xy}^{p-s} D(P) } }, \quad s\equiv1(\Mod2).
	\end{multline*}
	
	If $s$ is even, we can see that
	\begin{equation*}
	D_{xy}^{s}G_{xy} = t_{s}G_{xy}G_{xx}^{s/2}G_{yy}^{s/2} + (\text{the terms having at least three off-diagonal entries}),
	\end{equation*}
	where $t_{s}$ is a constant depending on $s$. Since any term having at least two off-diagonal entries is absorbed into $\bigO_{\prec}\paren{ \Phi_{r} }$, it is enough to consider
	\begin{equation*}
	\sum_{s=0}^{p}\combi{p}{s}\frac{\mathcal{C}_{p+1}}{N^{2}q^{p-1}}\sum_{x\neq y}
	\expec{ G_{xy}G_{xx}^{s/2}G_{yy}^{s/2} m^{d}G_{ii}^{t} \bigg(\prod_{j=1}^{k} G_{v_{j}v_{j}}^{u_{j}}\bigg) \paren{ \partial_{xy}^{p-s} D(P) } }.
	\end{equation*}
	Following the argument from \eqref{eq: ss1} and \eqref{eq: ss2}, we get
	\begin{equation*}
	\sum_{s=0}^{p}\combi{p}{s}\frac{\mathcal{C}_{p+1}}{N^{2}q^{p-1}}\sum_{x\neq y}
	\expec{ G_{xy}G_{xx}^{s/2}G_{yy}^{s/2} m^{d}G_{ii}^{t} \bigg(\prod_{j=1}^{k} G_{v_{j}v_{j}}^{u_{j}}\bigg) \paren{ \partial_{xy}^{p-s} D(P) } } = \bigO_{\prec}\paren{ \Phi_{r} }.
	\end{equation*}
	Thus, we have \eqref{eq: t1}.
	
	Next we consider the left-hand side of \eqref{eq: t2}
	\begin{equation*}
	\sum_{p=1}^{\ell}\frac{\mathcal{C}_{p+1}}{Nq^{p-1}}\sum_{x}\expec{ \partial_{ix}^{p}\paren{ m^{d+1}G_{xi}G_{ii}^{t-1} \bigg(\prod_{j=1}^{k} G_{v_{j}v_{j}}^{u_{j}}\bigg) D(P) } }.
	\end{equation*}
	As in \eqref{eq: tt1} and \eqref{eq: tt2}, using \eqref{eq: derivative G_ij}, \eqref{eq: derivative m} and \eqref{eq: ward id}, we have
	\begin{multline*}
	\frac{\mathcal{C}_{p+1}}{Nq^{p-1}}\sum_{x}\expec{ \partial_{ix}^{p}\paren{ m^{d+1}G_{xi}G_{ii}^{t-1} \bigg(\prod_{j=1}^{k} G_{v_{j}v_{j}}^{u_{j}}\bigg) D(P) } } \\
	= \sum_{s=0}^{p}\combi{p}{s}\frac{\mathcal{C}_{p+1}}{N^{2}q^{p-1}}\sum_{x\neq y} \expec{ D_{ix}^{s}( G_{xi}G_{ii}^{t-1}) m^{d+1}\bigg(\prod_{j=1}^{k} G_{v_{j}v_{j}}^{u_{j}}\bigg) \paren{ \partial_{ix}^{p-s} D(P) } }.
	\end{multline*}
	As in the previous case, if $s$ is even, it is absorbed into $\bigO_{\prec}\paren{ \Phi_{r} }$ by the same argument from \eqref{eq: ss1} and \eqref{eq: ss2}. Thus, we focus on the case $s$ is odd. If $s$ is odd, we observe that
	\begin{equation*}
	D_{ix}^{s}( G_{xi}G_{ii}^{t-1}) = - c_{s}\times (s!) G_{xx}^{\frac{s+1}{2}}G_{ii}^{\frac{s+1}{2}}G_{ii}^{t-1} + (\text{the terms having at least two off-diagonal entries}).
	\end{equation*}
	where $c_{s}$ is a constant depending on $s$. This implies \eqref{eq: t2} and, in fact, $c_{s}$ satisfies \eqref{eq: t3}.
\end{proof}

\end{appendices}

\section*{Acknowledgments.}
This research was motivated by discussions with Charles Bordenave. The author thanks Yukun He, Paul Jung and Ji Oon Lee for their helpful comments and suggestions. The author is also grateful to the referees for their careful reading of the manuscript and many helpful suggestions.
This work was supported in part by the National Research Foundation of Korea (NRF-2017R1A2B2001952, NRF-2019R1A5A1028324) and the
Hong Kong Research Grants Council (GRF-16301519, GRF-16301520).

\bibliographystyle{plain}

\end{document}